\documentclass{amsart}
\usepackage{amscd}
\usepackage[mathscr]{eucal}
\usepackage{amssymb}
\usepackage{multirow}
\usepackage{bm}
\usepackage[all]{xy}
\newcommand{\bZ}{{\mathbb Z}}
\newcommand{\s}{\mathop{\mathrm{S}}\nolimits}
\newtheorem{thm}{Theorem}[section]
\newtheorem{prop}[thm]{Proposition}
\newtheorem{lemma}[thm]{Lemma}
\newtheorem{cor}[thm]{Corollary}
\newtheorem{thm'}{Theorem}[subsection]
\newtheorem{prop'}[thm']{Proposition}
\newtheorem{lemma'}[thm']{Lemma}
\newtheorem{cor'}[thm']{Corollary}

\theoremstyle{definition}
\newtheorem{defi}[thm]{Definition}
\newtheorem{defi'}[thm']{Definition}

\theoremstyle{remark}
\newtheorem{prob}[thm]{Problem}
\newtheorem{rem}[thm]{Remark}
\newtheorem{rem'}[thm']{Remark}
\newtheorem{conv}[thm]{Convention}

\newtheorem{exam'}[thm']{Example}
\newtheorem{prob'}[thm']{Problem}

\begin{document}
\allowdisplaybreaks
\numberwithin{equation}{section}

\title{Unstable higher Toda brackets}
\author{H. \=Oshima}
\address[Hideaki \=Oshima]{Professor Emeritus\\
Ibaraki University\\
Mito, Ibaraki 310-8512, Japan}
\email{hideaki.ooshima.mito@vc.ibaraki.ac.jp}

\author{K. \=Oshima}
\address[Katsumi \=Oshima]{1-4001-5 Ishikawa, Mito, Ibaraki 310-0905, Japan}
\email{k-oshima@mbr.nifty.com}
\subjclass[2010]{Primary 55Q05; Secondary 55Q35, 55P05}
\keywords{Toda bracket, Unstable higher Toda bracket, Higher composition, Cofibration, Coextension, Extension}
\dedicatory{Dedicated to memories of the Arakis}

\begin{abstract}
We define new unstable $n$-fold Toda brackets 
$\{\vec{\bm f}\,\}^{(aq\ddot{s}_2)},\ \{\vec{\bm f}\,\}^{(\ddot{s}_t)}$ 
for every composable sequence $\vec{\bm f}=(f_n,\dots,f_1)$ of 
pointed maps between well-pointed spaces 
$X_{n+1}\overset{f_n}{\longleftarrow} \cdots\overset{f_2}{\longleftarrow}X_2\overset{f_1}{\longleftarrow}X_1$ with $n\ge 3$. 
The brackets agree with the classical Toda bracket when $n=3$, 
and they are subsets of both the unstable $n$-fold Toda brackets of Gershenson and Cohen for every $n\ge 3$. 
\end{abstract}
\maketitle

\section{Introduction}

The Toda bracket \cite{T1,T3,T,Sp} is one of the basic tools in homotopy theory 
and often called a secondary composition or a $3$-fold bracket. 
After \cite{T3} a number of definitions of a higher Toda bracket, that is, 
an $n$-fold bracket for $n\ge 3$, have appeared in the literature. 
Stable higher Toda brackets are comparatively investigated in \cite{C,W2} (cf.\,\cite{G,K,L,Po}).  
In this paper we study mainly unstable higher Toda brackets. 
A sequence $(p_3,p_4,p_5,\dots)$, where $p_n$ is an unstable $n$-fold bracket, is called 
{\it a system of unstable higher Toda brackets} if it is defined systematically, and it 
is called {\it normal} if $p_3$ agrees with the classical Toda bracket up to sign. 
Systems of Spanier \cite{S}, Walker \cite{W, W2} (cf.\,Mori \cite{M}), Blanc \cite{B1}, 
Blanc--Markl \cite{B2}, and Marcum--Oda \cite{MO} (cf.\,\cite{HMO}) are normal; 
systems of Gershenson \cite{G} and Cohen \cite{C} are not normal. 
It seems difficult to nominate one of known systems 
as the standard system, 
because 
we have little information about their applications and relations between them. 
We provide two new candidates for the standard system by modifying the Gershenson's system  which originated with \cite{T3}, 
and study relations between new systems, the systems of Gershenson and Cohen, 
and the $4$-fold bracket of \^{O}guchi \cite{Og,OO1}. 
Two new systems are normal. 
Our method is classical and not so abstract as \cite{B1,B2}. 

Given a composable sequence $\vec{\bm f}=(f_n,\dots,f_1)$ of pointed maps 
between well-pointed spaces $f_i:X_i\to X_{i+1}$ with $n\ge 3$, 
we will define $\{\vec{\bm f}\,\}^{(\star)}$ which is a subset of 
the group $[\Sigma^{n-2}X_1,X_{n+1}]$, 
where $\star$ is one of twelve symbols defined in Definition~6.1.1(4). 
($[\Sigma^kX,Y]$ is the set of homotopy classes of pointed maps from the $k$-fold pointed suspension of $X$ to $Y$.) 
Hence we have twelve systems of unstable higher Toda brackets. 
Four of them, $\{\vec{\bm f}\,\}^{(\star)}$ for $\star=aq\ddot{s}_2, \ddot{s}_t, qs_2, q$, 
are essential; 
$\{\vec{\bm f}\,\}^{(aq\ddot{s}_2)}$ and $\{\vec{\bm f}\,\}^{(\ddot{s}_t)}$ 
are candidates for the standard $n$-fold bracket; $\{\vec{\bm f}\,\}^{(q)}$ is the largest of 
the twelve subsets and a revision of the $n$-fold 
$C$-composition product of Gershenson \cite[Definition~2.2D]{G}; 
they are the empty set for suitable $\vec{\bm f}$. 
For a pointed space $X$, we denote the set of homotopy classes of pointed 
homotopy equivalences $X\to X$ by $\mathscr{E}(X)$ which is a subset of $[X,X]$ and a group under the composition operation. 
The group $\mathscr{E}(\Sigma^{n-2}X_1)$ acts on $[\Sigma^{n-2}X_1,X_{n+1}]$ from the right by the composition:
$$
[\Sigma^{n-2}X_1,X_{n+1}]\times\mathscr{E}(\Sigma^{n-2}X_1)\to [\Sigma^{n-2}X_1,X_{n+1}],\ 
(\alpha,\varepsilon)\mapsto \alpha\circ\varepsilon.
$$
Our main results are (1.1) -- (1.11) below. 
\begin{enumerate}
\item[(1.1)] $\{\vec{\bm f}\,\}^{(aq\ddot{s}_2)}\cup\{\vec{\bm f}\,\}^{(\ddot{s}_t)}
\subset\{\vec{\bm f}\,\}^{(qs_2)}\subset\{\vec{\bm f}\,\}^{(q)}$; \ 
$\{\vec{\bm f}\,\}^{(q)}\circ\varepsilon=\{\vec{\bm f}\,\}^{(q)}$ for every 
$\varepsilon\in\mathscr{E}(\Sigma^{n-2}X_1)$; 
$\{\vec{\bm f}\,\}^{(qs_2)}
=\{\vec{\bm f}\,\}^{(aq\ddot{s}_2)}\circ\mathscr{E}(\Sigma^{n-2}X_1)=
\{\vec{\bm f}\,\}^{(\ddot{s}_t)}\circ\mathscr{E}(\Sigma^{n-2}X_1)$.
\item[(1.2)] If $\alpha\in\{\vec{\bm f}\,\}^{(q)}$, then there are 
$\theta,\theta'\in[\Sigma^{n-2}X_1, \Sigma^{n-2}X_1]$ such that 
$\alpha\circ\theta\in\{\vec{\bm f}\,\}^{(aq\ddot{s}_2)}$ 
and $\alpha\circ\theta'\in\{\vec{\bm f}\,\}^{(\ddot{s}_t)}$.
\item[(1.3)] If $\{\vec{\bm f}\,\}^{(\star)}$ is not empty for some $\star$, then $\{\vec{\bm f}\,\}^{(\star)}$ is not empty 
for all $\star$. 
\item[(1.4)] If $\{\vec{\bm f}\,\}^{(\star)}$ contains $0$ for some $\star$, then $\{\vec{\bm f}\,\}^{(\star)}$ contains $0$ 
for all $\star$. 
\item[(1.5)] $\Sigma\{\vec{\bm f}\,\}^{(\star)}\subset(-1)^n\{\Sigma\vec{\bm f}\,\}^{(\star)}$ for all $\star$, 
where $\Sigma\vec{\bm f}=(\Sigma f_n,\dots,\Sigma f_1)$.  
\item[(1.6)] $\{\vec{\bm f}\,\}^{(\star)}$ depends only on the homotopy classes of $f_i$ $(1\le i\le n)$ for all $\star$.
\item[(1.7)] $\{\vec{\bm f}\,\}^{(aq\ddot{s}_2)}\cup\{\vec{\bm f}\,\}^{(\ddot{s}_t)}\subset\langle \vec{\bm f}\,\rangle$, 
where $\langle \vec{\bm f}\,\rangle$ is the $n$-fold bracket of Cohen \cite{C}. 
\item[(1.8)] When $n=3$, we have 
$\{\vec{\bm f}\,\}^{(aq\ddot{s}_2)}=\{\vec{\bm f}\,\}^{(\ddot{s}_t)}
=\{\vec{\bm f}\,\}$, 
where $\{\vec{\bm f}\,\}=\{f_3,f_2,f_1\}$ is the classical unstable Toda bracket 
which does not necessarily coincide with either $\{\vec{\bm f}\,\}^{(qs_2)}$ or $\langle\vec{\bm f}\,\rangle$. 
\item[(1.9)] When $n=4$, we have 
$
\{\vec{\bm f}\,\}^{(\ddot{s}_t)}=\bigcup\{f_4,[f_3,A_2,f_2],(f_2,A_1,f_1)\}\supset\{\vec{\bm f}\,\}^{(1)},
$
where the union $\bigcup$ is taken over all triples $(A_3,A_2,A_1)$ of 
null-homotopies $A_i:f_{i+1}\circ f_i\simeq *\ (i=1,2,3)$ such that 
$[f_{i+1},A_i,f_i]\circ (f_i,A_{i-1},f_{i-1})$ $\simeq *\ (i=2,3)$, 
and $\{\vec{\bm f}\,\}^{(1)}$ is the $4$-fold bracket of \^{O}guchi \cite[(6.1)]{OO1}. 
(See Section 2 for definitions of $[f_{i+1},A_i,f_i]$ and $(f_i,A_{i-1},f_{i-1})$.)
\item[(1.10)] For two pointed maps 
$Z\overset{f}{\longleftarrow}Y\overset{g}{\longleftarrow}X$, we denote by 
$\{f,g\}^{(\star)}$ the one point set consisting of the homotopy class of $f\circ g$. 
Then
\begin{enumerate}
\item[\rm(1)] If $\{f_{n-1},\dots,f_1\}^{(q)}\ni 0$ and $\{f_n,f_{n-1},\dots,f_k\}^{(aq\ddot{s}_2)}=\{0\}$ for 
all $k$ with $2\le k<n$, then 
$\{f_n,\dots,f_1\}^{(\star)}$ is not empty for all $\star$. 
\item[\rm(2)] If $\{f_n,\dots,f_2\}^{(q)}\ni 0$ and $\{f_k,\dots,f_2,f_1\}^{(aq\ddot{s}_2)}=\{0\}$ for all $k$ with $2\le k< n$, 
then $\{f_n,\dots,f_1\}^{(\star)}$ is not empty for all $\star$.
\end{enumerate}
\item[(1.11)] If a pointed map $j:A\to X$ is a cofibration in the category of non-pointed spaces, 
then for any pointed map $f:X\to Y$ the pointed map $1_Y\cup Cj:Y\cup_{f\circ j}CA\to Y\cup_f CX$ 
between pointed mapping cones is a cofibration in the category of non-pointed spaces.
\end{enumerate}

It is not clear whether the $n$-fold bracket $\{\vec{\bm f}\,\}^{(\star)}$ agrees 
with one of the $n$-fold brackets in \cite{B1,B2,C,G,MO,S,W,W2} when $n\ge 4$. 
An advantage of our definition is that it can be generalized easily to the stable version (see \S6.9) 
and the subscripted version 
$\{\vec{\bm f}\,\}^{(\star)}_{\vec{\bm m}}\subset[\Sigma^{|\vec{\bm m}|+n-2}X_1,X_{n+1}]$ (cf.\,\cite[p.9]{T} when $n=3$), 
where $\vec{\bm m}=(m_n,\dots,m_1)$ is a sequence of non-negative integers, $|\vec{\bm m}|=m_n+\dots+m_1$, 
and $f_i:\Sigma^{m_i}X_i\to X_{i+1}$ $(1\le i\le n)$. 
We omit details of the subscripted version 
because they are complicated but similar to the non subscripted version. 

The referee pointed out that B. Gray defined unstable higher Toda brackets 
in his unpublished note. 
However we have not confirmed his definition because we could not get his note. 

In Section 2, we recall usual notions of homotopy theory and state two propositions 2.1 
and 2.2, where 2.1 is well-known and 2.2 is (1.11) above and a key to define $\{\vec{\bm f}\,\}^{(\star)}$. 
In Section~3, we study maps between mapping cones, that is, we prove a lemma 
which shall be used in Section 5, and recall results of Puppe \cite{P}. 
In Section 4, we introduce the notion of homotopy cofibre. 
In Section 5, we revise the notion of shaft of Gershenson \cite{G}. 
Section~6 consists of nine subsections \S6.1--\S6.9. 
In \S6.1 we define $\{\vec{\bm f}\,\}^{(\star)}$. 
In \S6.2 we prove (1.k)  for k=1,2,3,4 and state an example. 
In \S6.k we prove (1.k+2) for k=3,4,5,6,7. 
In \S6.8 we prove a proposition which is the same as (1.10). 
In \S6.9 we define stable higher Toda brackets. 
In Appendix~A, we prove Proposition~2.2.
In Appendix~B, we recall the definition of $\langle\vec{\bm f}\,\rangle$ and prove 
$\Sigma\langle\vec{\bm f}\,\rangle\subset(-1)^n\langle\Sigma\vec{\bm f}\,\rangle$.

\section{Preliminaries}

Let $\mathrm{TOP}$ denote the category of  topological spaces 
({\it spaces} for short) and continuous maps ({\it maps} for short). 
Let $I$ denote the unit interval $[0,1]$, 
$I^n=I\times\cdots\times I$ $(n\text{-times})$, and $\partial I^n$ the boundary of $I^n$. 
For a space $X$, we denote by $1_X:X\to X$ the identity map of $X$ and by $i_t^X:X\to X\times I$ for 
$t\in I$ the map $i_t^X(x)=(x,t)$. 
For a map $f:X\to Y$, we denote by $1_f:X\times I\to Y$ the map $1_f(x,t)=f(x)$, and 
we call $f$ {\it closed} if $f(A)$ is closed for every closed subset $A$ of $X$. 
Given maps $f,g:X\to Y$, if there is a map $H : X\times I\to Y$ such that $H_0=f$ and $H_1=g$, 
then we write $f\simeq g$ or $H:f\simeq g$, where $H_t=H\circ i^X_t:X\to Y$ i.e.  $H_t(x)=H(x,t)$. 
In the last case, the map $H$ is often denoted by $H_t$ and called a {\it homotopy} from $f$ to $g$. 
The homotopy relation $\simeq$ is an equivalence relation on the set of maps $X\to Y$ and  
the equivalence class of $f$ is called the {\it homotopy class} of $f$. 
Given a homotopy $H : X\times I\to Y$, the {\it inverse} homotopy $-H:X\times I\to Y$ is defined by $(-H)_t=H_{1-t}$; 
$H$ is a {\it null homotopy} if $H_1$ is a constant map to a point of $Y$. 
A map $f:X\to Y$ is a {\it homotopy equivalence} (denoted by $f:X\simeq Y$) if there is a map $g:Y\to X$ such that $g\circ f\simeq 1_X$ 
and $f\circ g\simeq 1_Y$, where $g$ is called a {\it homotopy inverse of $f$} and denoted often by $f^{-1}$. 
We write $X\simeq Y$ if there is a homotopy equivalence $X\to Y$. 
A map $j:A\to X$ is a {\it cofibration} if, for any space $Y$ and any maps $f:X\to Y$ and $G:A\times I\to Y$ 
such that $f\circ j=G\circ i_0^A$, there is a map $H:X\times I\to Y$ such that $H\circ (j\times 1_I)=G$ and $H\circ i_0^X=f$. 
$$
\xymatrix{
&  A\times I \ar[dr]_-{j\times 1_I} \ar@/^/[drr]^-G &&\\
A \ar[ur]^-{i_0^A} \ar[dr]_-j & & X\times I \ar@{.>}[r]^H & Y\\
& X \ar[ur]^-{i_0^X} \ar@/_/[urr]_-f
}
$$
By \cite[Theorem 1]{S1}, every cofibration $j:A\to X$ is an embedding, 
that is, $j$ gives a homeomorphism from $A$ to the subspace $j(A)$ of $X$ i.e.\,$j:A\approx j(A)$. 

Let $\mathrm{TOP}^*$ denote the category of spaces with base points ({\it pointed spaces} for short) and 
maps preserving base points ({\it pointed maps} for short). 
We often call a space, a map, and a cofibration in $\mathrm{TOP}$ a {\it free} space, a {\it free} map, 
and a {\it free} cofibration, respectively. 
For any pointed space $X$, we denote the base point of $X$ by $x_0$ or $*$. 
A pointed space $X$ is a {\it well-pointed space} ({\it w-space} for short) (resp.\,{\it clw-space}) if the inclusion $\{x_0\}\to X$ is 
a free (resp.\,closed free) cofibration. 
Let $\mathrm{TOP}^w$ (resp.\,$\mathrm{TOP}^{clw}$) denote the category of $w$-spaces (resp.\,$clw$-spaces) and pointed maps. 
Thus we have a sequence of categories: 
$
\mathrm{TOP}^{clw}\rightarrowtail\mathrm{TOP}^w\rightarrowtail\mathrm{TOP}^*\twoheadrightarrow \mathrm{TOP},
$ 
where $\twoheadrightarrow$ is the functor forgetting the base points, and $\mathscr{C}\rightarrowtail\mathscr{D}$ means that 
the category $\mathscr{C}$ is a full subcategory of the category $\mathscr{D}$ and $\mathscr{D}$ contains 
at least one object which is not in $\mathscr{C}$ (cf.\,Beispiele 1 and 2 \cite[pp.32-33]{DKP}). 
Homotopy, homotopy equivalence, cofibration, and some of other notions in $\mathrm{TOP}$ 
can be defined in other three categories of the above sequence 
exactly as in $\mathrm{TOP}$, except that all maps and homotopies are required to respect the base points. 
As remarked in \cite[p.438]{S3}, the proof of \cite[Theorem 1]{S1} can be modified to prove that all cofibrations 
in $\mathrm{TOP}^*$ are embeddings. 
When we set $\mathscr{C}_4=\mathrm{TOP}^{clw}$, $\mathscr{C}_3=\mathrm{TOP}^w$, 
$\mathscr{C}_2=\mathrm{TOP}^*$, $\mathscr{C}_1=\mathrm{TOP}$,  
if, for some $k>\ell$, a map $j:A\to X$ in $\mathscr{C}_k$ is a cofibration in $\mathscr{C}_\ell$, then 
$j$ is a cofibration in $\mathscr{C}_k$. 
For pointed spaces $X$ and $Y$, $[X,Y]$ denotes the set of pointed homotopy classes of pointed maps $X\to Y$, 
and the trivial map $X\to Y,\ x\mapsto y_0,$ is denoted by $*$ and its homotopy class is denoted by $0$; 
$[X,Y]$ is regarded as a pointed set with the base point $0$. 
For homotopies $H:Y\times I\to Z$ and $G, F: X\times I\to Y$ with $F_1=G_0$, homotopies 
$H\,\bar{\circ}\,G: X\times I\to Z$ and $G\bullet F:X\times I\to Y$ are defined by 
$$
H\,\bar{\circ}\,G(x,t)=H(G(x,t),t),\quad 
G\bullet F(x,t)=\begin{cases} F(x,2t) & 0\le t\le \frac{1}{2}\\ G(x,2t-1) & \frac{1}{2}\le t\le 1\end{cases}.
$$

Assign to the $n$-sphere $\s^n=\{(t_1,\dots,t_{n+1})\in\mathbb{R}^{n+1}\,|\,\sum t_i^2=1\}\ (n=0,1,2,\dots)$ 
and $I=[0,1]$ the base points $(1,0,\dots,0)$ and $1$, respectively. 
Then, as is well-known, $\s^n$ and $I$ are $clw$-spaces. 

For pointed spaces $X_1,\dots,X_n$, we denote by $X_1\wedge\cdots\wedge X_n$ the quotient space 
$$
(X_1\times\dots\times X_n)/(\bigcup_{i=1}^n X_1\times\dots\times X_{i-1}\times\{*_i\}\times X_{i+1}\times\dots\times X_n),
$$
where $*_i$ is the base point of $X_i$. 
In $X_1\wedge\cdots\wedge X_n$, the point represented by $(x_1,\dots,x_n)$ is denoted by $x_1\wedge\dots\wedge x_n$, 
and $*_1\wedge\dots\wedge *_n$ is the base point. 
For pointed maps $f_i:X_i\to Y_i$, we set $f_1\wedge\dots\wedge f_n:X_1\wedge\dots\wedge X_n\to 
Y_1\wedge\dots\wedge Y_n,\ x_1\wedge\dots\wedge x_n\mapsto f_1(x_1)\wedge\dots\wedge f_n(x_n)$. 
For a pointed space $X$ and an integer $n\ge 0$, we set $\Sigma^nX=X\wedge\s^n$ which is called 
the {\it $n$-fold pointed suspension} of $X$; for a pointed map $f:X\to Y$ 
we set $\Sigma^nf=f\wedge 1_{\s^n}:\Sigma^nX\to \Sigma^nY$. 

We identify $\s^n\ (n\ge 1)$ with $I^n/\partial I^n$ and $\s^1\wedge\cdots\wedge\s^1\ (n\text{-times})$ 
by the following way. 
Take and fix a relative homeomorphism 
$\psi_n:(I^n,\partial I^n)\to (\s^n,*)$ for each $n\ge 1$ (e.g. \cite[p.5]{T}). 
Identify $I^n/\partial I^n$ with $\s^n$ by the homeomorphism induced from $\psi_n$, and 
denote $\psi_n(t_1,\dots,t_n)$ by $\overline{t_1}\wedge\cdots\wedge\overline{t_n}$. 
Also identify $\s^n$ with $\s^1\wedge\cdots\wedge\s^1\ (n\text{-times})$ by the homeomorphism $h_n$ of  
the following commutative square with $q$ the quotient map. 
(Notice that $h_n(\overline{t_1}\wedge\cdots\wedge\overline{t_n})=\overline{t_1}\wedge\cdots\wedge\overline{t_n}$.) 
$$
\begin{CD}
I^n @>\psi_1\times\dots\times\psi_1>>  \s^1\times\dots\times\s^1 \\
@V\psi_nVV @VVqV\\
\s^n @>h_n>\approx>  \s^1\wedge\dots\wedge\s^1
\end{CD}
$$
Under the above identifications, we have $\s^m\wedge\s^n=\s^{m+n}=\s^n\wedge\s^m$, where, if 
$m,n\ge 1$, then 
\begin{gather*}
(x_1\wedge\cdots\wedge x_m)\wedge(x_{m+1}\wedge\cdots\wedge x_{m+n})
=x_1\wedge\cdots\wedge x_{m+n}\\
=(x_1\wedge\cdots\wedge x_n)\wedge(x_{n+1}\wedge\cdots\wedge x_{m+n})\quad (x_i\in\s^1\ (1\le i\le m+n)). 
\end{gather*}
Since spheres are compact and Hausdorff, it follows that, for any pointed space $X$, we have the identifications: 
\begin{equation}
\begin{split}
\Sigma^n\Sigma^mX&=(X\wedge\s^m)\wedge\s^n
=X\wedge(\s^m\wedge \s^n)=X\wedge\s^{m+n}\\
&=X\wedge(\s^n\wedge\s^m)
=(X\wedge\s^n)\wedge\s^m=\Sigma^m\Sigma^nX.
\end{split}
\end{equation}
The switching map 
\begin{equation}
\tau(\s^m,\s^n):\s^{m+n}=\s^m\wedge\s^n\to\s^n\wedge\s^m=\s^{m+n}, \ x\wedge y\mapsto y\wedge x,
\end{equation}
is a homeomorphism of the degree $(-1)^{mn}$. 

Given a space $A$, let $\mathrm{TOP}^A$ denote the category of spaces under $A$, that is, 
objects are free maps $i:A\to X$ and a morphism 
$f$ from $i:A\to X$ to $i':A\to X'$ is a free map $f:X\to X'$ with $f\circ i=i'$. 
\begin{equation}
\begin{split}
\xymatrix{
& A \ar[dl]_-i \ar[dr]^-{i'} &\\
X \ar[rr]^-f & & X'
}
\end{split}
\end{equation}
Let $\mathrm{TOP}^A(i,i')$ denote the set of all morphisms from $i:A\to X$ to $i':A\to X'$. 
For $f,f'\in\mathrm{TOP}^A(i,i')$, if there exists a homotopy $H:X\times I\to X'$ such that 
$H_0=f,\ H_1=f',\ H_t\in \mathrm{TOP}^A(i,i')$ for all $t\in I$, then we write $f\overset{A}{\simeq} f'$ or 
$H: f\overset{A}{\simeq}f'$. 
Note that $\mathrm{TOP}^{\{*\}}=\mathrm{TOP}^*$. 
The following is well-known (e.g. \cite[(3.6)]{D}, \cite[(5.2.5)]{tD}, \cite[(2.18)]{DKP}, \cite[(6.18)]{J}). 

\begin{prop}
Given a commutative triangle (2.3), if $i$ and $i'$ are cofibrations and $f:X\to X'$ is a homotopy equivalence 
in $\mathrm{TOP}$, then $f:i\to i'$ is a homotopy equivalence in $\mathrm{TOP}^A$, that is, 
there exists $g\in\mathrm{TOP}^A(i',i)$ with 
$g\circ f\overset{A}{\simeq}1_X$ and $f\circ g\overset{A}{\simeq} 1_{X'}$. 
\end{prop}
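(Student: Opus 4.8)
The plan is to reduce the statement to the case of a self-map homotopic to the identity and then to run a ``loop-absorption'' argument driven by the homotopy extension property (HEP) of $i$ and $i'$.

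\emph{Reduction.} Choose a free homotopy inverse $g_0\colon X'\to X$ of $f$ and a free homotopy $P\colon g_0\circ f\simeq 1_X$. Restricting $P$ to $A$ yields a free homotopy $g_0\circ i'=g_0\circ f\circ i\simeq i$; feeding it into the HEP for the cofibration $i'$ deforms $g_0$ to a map $g\in\mathrm{TOP}^A(i',i)$ (so $g\circ i'=i$) that is freely homotopic to $g_0$ and hence remains a free homotopy inverse of $f$. Then $g\circ f\in\mathrm{TOP}^A(i,i)$ with $g\circ f\simeq 1_X$ freely, and $f\circ g\in\mathrm{TOP}^A(i',i')$ with $f\circ g\simeq 1_{X'}$ freely. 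It therefore suffices to prove: if $i\colon A\to X$ is a cofibration and $\phi\in\mathrm{TOP}^A(i,i)$ satisfies $\phi\simeq 1_X$ freely, then $\phi$ is a homotopy equivalence in $\mathrm{TOP}^A$. Granting this, $g\circ f$ and $f\circ g$ are homotopy equivalences in $\mathrm{TOP}^A$, so in the homotopy category $h\mathrm{TOP}^A$ (same objects, morphisms the $\overset{A}{\simeq}$-classes, which is a category since $\overset{A}{\simeq}$ is a congruence) the class of $f$ has both a left and a right inverse, hence is an isomorphism; unwinding this isomorphism is exactly the conclusion that $f\colon i\to i'$ is a homotopy equivalence in $\mathrm{TOP}^A$.

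\emph{The key lemma.} Fix a free homotopy $h\colon 1_X\simeq\phi$ and put $\omega=h\circ(i\times 1_I)\colon A\times I\to X$; since $\phi\circ i=i$ this $\omega$ is a loop of maps at $i$. Applying the HEP for $i$ to the pair consisting of $1_X\colon X\to X$ and the reverse loop $\bar\omega$ produces a map $\psi\in\mathrm{TOP}^A(i,i)$ together with a free homotopy $H\colon 1_X\simeq\psi$ whose restriction to $A$ is $\bar\omega$. Concatenating $h$ with $\phi\circ H$, and $H$ with $\psi\circ h$, gives free homotopies $1_X\simeq\phi\circ\psi$ and $1_X\simeq\psi\circ\phi$ whose restrictions to $A$ are the loops $(\phi\circ\bar\omega)\bullet\omega$ and $(\psi\circ\omega)\bullet\bar\omega$ at $i$. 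Since $h$ (resp.\ $H$) drags the map $i$ once around $\omega$ (resp.\ $\bar\omega$), post-composition with $\phi$ (resp.\ $\psi$) acts on $\pi_1$ of the mapping space $\mathrm{Map}(A,X)$ based at $i$ as conjugation by $[\omega]^{\pm1}$, so $[\phi\circ\bar\omega]=[\omega]^{-1}$ and $[\psi\circ\omega]=[\omega]$; hence both loops above are null-homotopic rel endpoints. Finally, since $i$ is a cofibration, so is the inclusion $A\times I\cup X\times\partial I\hookrightarrow X\times I$, and a relative HEP argument replaces a free homotopy whose $A$-restriction is null-homotopic rel endpoints by an $\overset{A}{\simeq}$-homotopy. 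This yields $\phi\circ\psi\overset{A}{\simeq}1_X$ and $\psi\circ\phi\overset{A}{\simeq}1_X$, so $\psi$ is a two-sided homotopy inverse of $\phi$ in $\mathrm{TOP}^A$.

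\emph{Main obstacle.} The crux is the key lemma, and inside it the ``absorption'' of $\omega$ against $\bar\omega$: one must arrange that the auxiliary map $\psi$ contributes exactly $\bar\omega$ on $A$, and then check that $(\phi\circ\bar\omega)\bullet\omega$ and $(\psi\circ\omega)\bullet\bar\omega$ are null-homotopic rel endpoints. This is immediate from the $\pi_1$-computation in $\mathrm{Map}(A,X)$ sketched above, but a mapping-space-free proof must instead exhibit explicit homotopies of homotopies built from $h$ and $H$ by standard reparametrizations. All the remaining ingredients --- the two reductions and the relative HEP step --- are routine manipulations with the homotopy extension property and with the stability of cofibrations under products with the inclusion $\partial I\hookrightarrow I$.
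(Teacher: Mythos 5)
The paper does not actually prove Proposition~2.1; it records it as well-known and cites Dold~\cite{D}, tom Dieck~\cite{tD}, DKP~\cite{DKP}, and James~\cite{J}, so there is no in-paper argument to compare against. Your blind proof is correct and is in the spirit of those references: the standard route is precisely the two-step structure you use, namely (a) straighten a free homotopy inverse $g_0$ of $f$ to a map $g\in\mathrm{TOP}^A(i',i)$ via the HEP for $i'$, reducing everything to the lemma that a self-map $\phi\in\mathrm{TOP}^A(i,i)$ freely homotopic to $1_X$ is invertible under $A$, and (b) prove that lemma using the HEP for $i$ to manufacture $\psi$ and then upgrade free homotopies $1_X\simeq\phi\psi$, $1_X\simeq\psi\phi$ to homotopies under $A$.

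Where you differ slightly from the usual textbook treatments of (b) is that you treat $\phi\psi$ and $\psi\phi$ symmetrically and dispose of both at once via the $\pi_1(\mathrm{Map}(A,X),i)$--conjugation computation $[\phi\circ\bar\omega]=[\omega]^{-1}$, $[\psi\circ\omega]=[\omega]$; several references instead produce only a one-sided inverse under $A$ from the lemma and then run a bootstrap (apply the lemma again to $\psi$, or to $\psi\phi$) to get the two-sided inverse. Your version is cleaner. The one genuine caveat, which you already flag yourself, is that the $\pi_1(\mathrm{Map}(A,X))$ language is only a heuristic for general $A$ (the exponential law for $\mathrm{Map}(A,X)$ with the compact-open topology may fail), so the null-homotopies of $(\phi\circ\bar\omega)\bullet\omega$ and $(\psi\circ\omega)\bullet\bar\omega$ rel $A\times\partial I$ should be exhibited directly as maps $A\times I\times I\to X$ built by reparametrizing the explicit squares $(a,t,s)\mapsto h_s(\omega(a,t))$ and $(a,t,s)\mapsto H_s(\bar\omega(a,t))$; once that is spelled out, together with the cofibration $X\times\partial I\cup A\times I\hookrightarrow X\times I$ used in the final relative-HEP step, the argument is complete.
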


For spaces $X$ and $Y$, we denote by $X+Y$ the topological sum of them, that is, 
it is the disjoint union of them as a set and $A\subset X+Y$ is open if and only if $A\cap X$ is open in $X$ 
and $A\cap Y$ is open in $Y$. 

For a pointed space $X$, the {\it cone} $CX$ over it and the {\it suspension} $\Sigma X$ of it are 
defined by $CX=X\wedge I=(X\times I)/(\{x_0\}\times I\cup X\times\{1\})$ and 
$\Sigma X=(X\times I)/(\{x_0\}\times I\cup X\times\{0,1\})$.  
The point of $\Sigma X$ represented by $(x,t)\in X\times I$ is denoted by $x\wedge\overline{t}$. 
The space $\Sigma X$ is based by $x_0\wedge\overline{1}$. 
Usually we identify $\Sigma X=\Sigma^1X$. 
For a pointed map $f:X\to Y$, two maps $Cf:CX\to CY$ and 
$\Sigma f:\Sigma X\to \Sigma Y$ are defined by $Cf(x\wedge t)=f(x)\wedge t$ and 
$\Sigma f(x\wedge\overline{t})=f(x)\wedge\overline{t}$; 
the (pointed) {\it mapping cone} of $f$ is the space 
$C_{f}=Y\cup_{f} CX$ which is the quotient of $Y+CX$ by the equivalence relation generated by the relation 
$f(x)\sim x\wedge 0\ (x\in X)$ and is based by the point represented by $y_0$; 
the injection $i_f : Y\to Y\cup_f CX$ is a cofibration in $\mathrm{TOP}^*$ by \cite[Hilfssatz~6]{P} and so an embedding; 
let
\begin{gather*}
q_f : Y\cup_f CX\to (Y\cup_f CX)/Y=\Sigma X,\\
q_f':(Y\cup_f CX)\cup_{i_f}CY\to ((Y\cup_f CX)\cup_{i_f}CY)/CY=\Sigma X
\end{gather*}
denote the quotient maps, then $q_f=q_f'\circ i_{i_f}$ and  
$q_f'$ is a homotopy equivalence in $\mathrm{TOP}^*$ by \cite[Satz 3]{P}; for any integer $\ell\ge 1$ 
let $\psi^\ell_f:\Sigma^\ell Y\cup_{\Sigma^\ell f}C\Sigma^\ell X\approx \Sigma^\ell(Y\cup_f CX)$ 
denote the homeomorphism defined by $\psi^\ell_f(y\wedge s_\ell)=y\wedge s_\ell$ and 
$\psi^\ell_f(x\wedge s_\ell\wedge t)=x\wedge t\wedge s_\ell$ for $s_\ell\in\s^\ell$ and $t\in I$. 
If the first square of the following diagram in $\mathrm{TOP}^*$ is commutative, then there exists the map 
$b\cup Ca$ with the diagram commutative. 
$$
\xymatrix{
X\ar[r]^-f \ar[d]_-a &Y \ar[r]^-{i_f} \ar[d]_-b & Y\cup_f CX \ar[r]^-{q_f} \ar@{.>}[d]_-{b\cup Ca} & \Sigma X \ar[d]_-{\Sigma a} \\
X' \ar[r]^-{f'} &Y' \ar[r]^-{i_{f'}} &Y'\cup_{f'}CX' \ar[r]^-{q_{f'}} & \Sigma X'
}
$$

The next proposition is the same as (1.11) and shall be used to define {\it induced iterated mapping cones} in Definition~5.4. 

\begin{prop}
If a pointed map $j:A\to X$ is a free (resp.\,closed free) cofibration, 
then, for any pointed map $f:X\to Y$, $1_Y\cup Cj :Y\cup_{f\circ j}CA\to Y\cup_f CX$ is a free (resp.\,closed free) cofibration. 
\end{prop}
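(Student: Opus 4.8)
The plan is to reduce the homotopy extension property of $g:=1_Y\cup Cj:P\to Q$, where $P:=Y\cup_{f\circ j}CA$ and $Q:=Y\cup_f CX$, to that of a single inclusion into the cone $CX$, and then to verify the latter by a pushout-product argument.

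Since $(-)\times I$ preserves pushouts, $Q\times I=(Y\times I)\cup_{f\times 1_I}(CX\times I)$ and $P\times I=(Y\times I)\cup_{(f\circ j)\times 1_I}(CA\times I)$. So, given a space $Z$, a map $Q\to Z$ and a homotopy $P\times I\to Z$ that agree on $P\times\{0\}$ along $g$, the data already prescribe the desired extension $Q\times I\to Z$ on $Y\times I$ (from the homotopy), on $CX\times\{0\}$ (from the map), and on $Cj(CA)\times I$ (from the homotopy), i.e. on $(CX\times\{0\})\cup(X'\times I)$, where $X':=Cj(CA)\cup\{x\wedge 0:x\in X\}\subseteq CX$; the summand $\{x\wedge 0:x\in X\}$ occurs because it is exactly the part of $CX$ that is glued to $Y$ by $f$. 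One checks these prescriptions agree on overlaps, using the identifications built into $P$ and $Q$. Hence $g$ has the homotopy extension property as soon as the inclusion $X'\hookrightarrow CX$ does; so it suffices to prove that $X'\hookrightarrow CX$ is a cofibration in $\mathrm{TOP}$.

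To this end, write $q:X\times I\to CX$ for the quotient map collapsing $S:=(\{x_0\}\times I)\cup(X\times\{1\})$. A short computation gives $q^{-1}(X')=F$, where $F:=(X\times\{0,1\})\cup(A\times I)$ (identifying $A$ with $j(A)$), and $S\subseteq F$. Now $F\hookrightarrow X\times I$ is a cofibration, being the natural inclusion $(A\times I)\cup(X\times\{0,1\})\hookrightarrow X\times I$, i.e. the pushout-product of the cofibrations $j:A\hookrightarrow X$ and $\{0,1\}\hookrightarrow I$, which is a cofibration since each factor is. Moreover, because $S\subseteq F$, the square with horizontal arrows $F\hookrightarrow X\times I$ and $F/S\to CX$ and vertical arrows the quotient maps $F\to F/S$ and $q$ is a pushout in $\mathrm{TOP}$: that pushout is, both as a set and as a space, $(X\times I)/S=CX$, since gluing $F$ along $F\to F/S$ merely collapses $S$, which $q$ already does, and introduces no finer opens. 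Therefore $F/S\hookrightarrow CX$, being a pushout of the cofibration $F\hookrightarrow X\times I$, is a cofibration; and $F/S=X'$ because $F=q^{-1}(X')\supseteq S$. This proves the statement for free cofibrations. If $j$ is closed, then $F$, $S$, $Cj(CA)$ and $X'$ are closed subspaces, so each cofibration built above is closed, giving the closed case.

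The step I expect to be the main obstacle is keeping the point-set topology under control: checking that $(-)\times I$ turns the two mapping-cone pushouts into the stated pushouts with their correct topologies, that the partial extension prescribed in the second paragraph is genuinely well defined, and, above all, that the displayed square is a pushout of topological spaces without any closedness hypothesis on $j$. The homotopy-theoretic input is slight --- only the observation that the sole obstruction to extending a homotopy over $Q$ lies over the cone direction of $CX$, together with the standard facts that pushout-products of cofibrations, and pushouts of cofibrations, are again cofibrations.
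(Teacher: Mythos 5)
Your reduction of the HEP of $1_Y\cup Cj$ to the single inclusion $X'\hookrightarrow CX$, and your proof that the latter is a cofibration by exhibiting it as a pushout of the pushout-product inclusion $F=(X\times\partial I)\cup(A\times I)\hookrightarrow X\times I$, is a genuinely different and rather more economical route than the paper's. The paper instead passes first through the \emph{unpointed} mapping cone $Y\cup_{f\circ j}\Gamma A\to Y\cup_f\Gamma X$, constructing an explicit map $\varphi_f:Y+\bigl(X\times\partial I\cup A\times I\bigr)\to Y\cup_{f\circ j}\Gamma A$, showing that $1_Y\cup\Gamma j$ arises as a pushout of the (same) cofibration $i'$ along $\varphi_f$, and then collapses $\Gamma$ to $C$ via a second pushout. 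Both arguments ultimately rest on Str{\o}m's pushout-product theorem for $j:A\hookrightarrow X$ and $\partial I\hookrightarrow I$, and on stability of cofibrations under pushout. Incidentally, the concern you single out ``above all'' --- that the displayed square might fail to be a pushout of spaces without closedness of $j$ --- is unfounded: since $q$ and $F\to F/S$ are both surjective quotient maps collapsing $S$, the pushout is $(X\times I)/S=CX$ with its quotient topology, as your argument says, with or without closedness.

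There is, however, a genuine gap, and it lies in the step you listed second: the piecewise ``partial extension'' on $(CX\times\{0\})\cup(X'\times I)$ is not automatically continuous. Equivalently (since you show $X'\cong F/S$ with the quotient topology and $-\times I$ preserves quotients), one must verify that the induced map $\widetilde\phi:F\to P=Y\cup_{f\circ j}CA$ --- sending $(x,0)\mapsto f(x)$, $(a,t)\mapsto a\wedge t$, $(x,1)\mapsto *$ --- is continuous. The three pieces $X\times\{0\}$, $X\times\{1\}$, $A\times I$ do not form a closed (or open) cover of $F$ when $j$ is not a closed cofibration, so the ordinary pasting lemma does not apply, and continuity at points $(x,0)$ with $x\in\overline{A}\setminus A$ is exactly where the difficulty sits. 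This is precisely the point the paper handles with care: the continuity of $\varphi_f$ is established via Str{\o}m's \cite[Theorem~2 and Lemma~3]{S2}, using the retractions of $X\times I$ onto $X\times\{0\}\cup A\times I$ and onto $X\times\{1\}\cup A\times I$. Your proof acknowledges the issue but does not supply this argument; inserting it (verbatim from the paper, or by the equivalent observation that $\widetilde\phi$ restricted to each of the two retracts $X\times\{0\}\cup A\times I$ and $X\times\{1\}\cup A\times I$ is continuous by Str{\o}m's Lemma 3) would complete the proof. Note also that your closing sentence for the closed case inherits the same dependence, since its free-case input must first be secured.
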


The above proposition may be folklorish, but we have not found its proof in the literature, 
and so we will prove it in Appendix~A for completeness. 

\begin{cor}
\begin{enumerate}
\item[(1)] If a pointed map $j:A\to X$ is a free (resp.\,closed free) cofibration, 
then $\Sigma j:\Sigma A\to\Sigma X$ is a free (resp.\,closed free) cofibration. 
\item[(2)] If $X$ is a $w$-space (resp.\,$clw$-space), 
then $\Sigma X$ and $CX$ are 
$w$-spaces (resp.\,$clw$-spaces), and 
$i_f:Y\to Y\cup_f CX$ is a free (resp.\,closed free) cofibration for every pointed map $f:X\to Y$. 
\item[(3)] If $f:X\to Y$ is a pointed map between $w$-spaces (resp.\,$clw$-spaces), 
then $Y\cup_f CX$ is a $w$-space (resp.\,$clw$-space). 
\end{enumerate}
\end{cor}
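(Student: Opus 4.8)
The plan is to derive the Corollary directly from Proposition~2.2 by applying it to suitable cofibrations, together with the elementary fact (recorded in the excerpt) that a cofibration in a larger category of the chain $\mathrm{TOP}^{clw}\rightarrowtail\mathrm{TOP}^w\rightarrowtail\mathrm{TOP}^*\rightarrowtail\mathrm{TOP}$ which happens to be a cofibration in a smaller category is automatically a cofibration in the larger one, and the basic observation that $\Sigma X = Y\cup_f CX$ in the special case $Y=\{*\}$, $X$ arbitrary, $f=*$, while $CX$ is the analogous construction with $Y=CX'$-type data. I expect no single hard obstacle; the work is in choosing the right map $f$ and the right $j$ in each of the three parts and checking the identifications are honest homeomorphisms (not merely natural maps), so that Proposition~2.2 applies verbatim.

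For part (1): I would specialize Proposition~2.2 to the pointed map $f=*: X\to \{*\}$. Then $Y\cup_f CX = \{*\}\cup_* CX$ is canonically $\Sigma$ of nothing — more precisely, one must instead take $Y = \{*\}$ only after identifying the pointed mapping cone $\{*\}\cup_* CX$ with $\Sigma X$; but the cleanest route is to apply Proposition~2.2 with $Y$ replaced by $CX'$ is not needed. Better: apply Proposition~2.2 to the cofibration $j:A\to X$ and to the map $f = i_X^{CX}\circ(\text{collapse})$ — actually the honest statement is that $\Sigma A \to \Sigma X$ is, up to the homeomorphism collapsing the cone point, the map $1_{\{*\}}\cup Cj$ on mapping cones over the trivial map. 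So I would write $\Sigma A = \{*\}\cup_{*} CA$ and $\Sigma X = \{*\}\cup_* CX$, note that under these identifications $\Sigma j = 1_{\{*\}}\cup Cj$, and invoke Proposition~2.2 with $Y=\{*\}$, $f=*$. The (closed) hypothesis on $j$ transfers to the (closed) conclusion.

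For part (2): The space $\Sigma X$ is $\Sigma$ applied to $X$, so if $X$ is a $w$-space the inclusion $\{*\}\to X$ is a free cofibration, hence by part (1) $\Sigma(\{*\}\to X) = (\{*\}\to \Sigma X)$ is a free cofibration, i.e. $\Sigma X$ is a $w$-space; for $CX$, note $CX = \{*\}\cup_{1_X} CX$ is the mapping cone of $1_X:X\to X$, or more simply observe that the inclusion $\{*\}\to CX$ factors as $\{*\}\to X \to CX$ where $X\to CX$, $x\mapsto x\wedge 0$, is the cofibration $i_{1_X}$ after identifying $C_{1_X}$-type data — alternatively use that $CX$ is contractible and the inclusion of a point into it is a cofibration because $X\to CX$ is, being $i_{1_X}$ up to homeomorphism. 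Then $i_f:Y\to Y\cup_f CX$ is the cofibration $i_f$, which the excerpt already records as a pointed cofibration via Puppe \cite[Hilfssatz~6]{P}; the point here is only to upgrade ``free (resp.\ closed free) cofibration'' which follows since the inclusion $\{*\}\to X$ being a (closed) free cofibration makes $i_f$ a (closed) free cofibration — this is exactly the content one gets by applying Proposition~2.2 with $j:\{*\}\to X$ the point inclusion, $A=\{*\}$, noting $Y\cup_{f\circ j}C\{*\} = Y$ (since $C\{*\}=\{*\}$) and $1_Y\cup Cj = i_f$.

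For part (3): Since $X,Y$ are $w$-spaces, by part (2) $CX$ is a $w$-space and $i_f: Y\to Y\cup_f CX$ is a free cofibration; then the composite $\{*\}\to Y \xrightarrow{i_f} Y\cup_f CX$ is a composite of free cofibrations, hence a free cofibration, so $Y\cup_f CX$ is a $w$-space. For the $clw$ case one checks the same composite is a closed free cofibration: $\{*\}\to Y$ is closed free by hypothesis, $i_f$ is closed free by part (2), and composites of closed free cofibrations are closed free cofibrations (the image of a closed set under a closed embedding whose image is closed is closed). The main thing to be careful about throughout is that ``closed'' is preserved — for part (1) one must verify $Cj:CA\to CX$ carries closed sets to closed sets when $j$ does, which follows because $C(-)$ is a quotient of $(-)\times I$ by a closed equivalence relation and $j\times 1_I$ is closed when $j$ is; this is the only genuinely technical point, and it is precisely the kind of bookkeeping already handled inside the proof of Proposition~2.2, so I would simply cite that proof's closedness argument.
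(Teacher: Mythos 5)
Your proposal is correct and follows essentially the same route as the paper: specialize Proposition~2.2 with $Y=\{*\}$, $f=*$ to obtain (1); specialize with $j:\{x_0\}\hookrightarrow X$ to obtain the $\Sigma X$, $CX$, and $i_f$ assertions of (2) (using that $i_{1_X}:X\to X\cup_{1_X}CX\approx CX$ is $1_X\cup Cj$ in that specialization, and that composites of free resp.\ closed free cofibrations are again such); and compose $\{y_0\}\subset Y$ with $i_f$ for (3). The only blemish is the misprint ``$CX=\{*\}\cup_{1_X}CX$'' for the mapping cone of $1_X$, which should read $X\cup_{1_X}CX$, but you immediately give the correct factorization $\{*\}\to X\to CX$, so nothing is lost.
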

\begin{proof}
(1) By taking $Y=\{y_0\}$ in Proposition 2.2, the assertion follows. 

(2) Let $X$ be a $w$-space (resp.\,$clw$-space). 
Set $j:A=\{x_0\}\subset X$. 
The assertions about $\Sigma X$ and $i_f$ follow from (1) and Proposition 2.2. 
Since $i_{1_X}\circ j:\{x_0\}\to CX$ is a free (resp.\,closed free) cofibration, 
$CX$ is a $w$-space (resp.\,$clw$-space).  

(3) Let $X$ and $Y$ be $w$-spaces (resp.\,$clw$-spaces). 
Then $Y\cup_f CX$ is a $w$-space (resp.\,$clw$-spaces), since the composite of $\{y_0\}\subset Y$ 
with $i_f:Y\to Y\cup_f CX$ is a free (resp.\,closed free) cofibration. 
\end{proof}

Given pointed maps $f:X\to Y$ and $g:Y\to Z$ with a pointed null homotopy $H:g\circ f\simeq *$, we set
\begin{align*}
(g,H,f) &: \Sigma X\to Z\cup_g CY,\quad x\wedge\overline{t}\mapsto
\begin{cases} f(x)\wedge(1-2t) & 0\le t\le \frac{1}{2}\\ H(x,2t-1) & \frac{1}{2}\le t\le 1\end{cases},\\
[g,H,f] &: Y\cup_f CX\to Z,\quad y\mapsto g(y),\quad x\wedge t\mapsto H(x,t),
\end{align*}
which are called a {\it coextension} of $f$ with respect to $g$ and an {\it extension} of $g$ with respect to $f$, 
respectively (\cite{T,Og}). 
Given pointed maps $f_i:X_i\to X_{i+1}$ for $i=1,2,3$, the {\it Toda bracket} $\{f_3,f_2,f_1\}$ (\cite{T1,T3,T}) 
which is a subset of $[\Sigma X_1,X_4]$ 
is the set of homotopy classes of maps of the form $[f_3,A_2,f_2]\circ(f_2,A_1,f_1)$, where $A_j:f_{j+1}\circ f_j\simeq *$ for $j=1,2$. 
If $A_1$ or $A_2$ does not exist, then $\{f_3,f_2,f_1\}$ denotes the empty set. 
As is well-known, $\{f_3,f_2,f_1\}$ depends only on the homotopy classes of $f_i\ (i=1,2,3)$ (e.g. Section 3 of \cite{OO1}). 

\section{Maps between mapping cones} 

In this section we will work in $\mathrm{TOP}^*$. 

The following shall be used to prove Lemma~5.3 which defines induced iterated mapping cones. 

\begin{lemma}
Given two maps $j:Y\to Y'$ and $g':Y'\to Z$, 
the following diagram is homotopy commutative and $i_{g'\circ j}\cup Ci_j$ is a homotopy equivalence. 
$$
\xymatrix{
Y \ar@{=}[d] \ar[r]^-j & Y' \ar[d]_-{g'} \ar[r]^-{i_j} & Y'\cup_j CY \ar[d]_-{g'\cup C1_Y} \\
Y \ar[r]^-{g'\circ j} & Z \ar[d]_-{i_{g'}} \ar[r]^-{i_{g'\circ j}} & Z\cup_{g'\circ j} CY \ar[d]^-{i_{g'\cup C1_Y}} \ar[dl]_-{1_Z\cup Cj} \\
& Z\cup_{g'} CY' \ar[r]_-{i_{g'\circ j}\cup Ci_j} & (Z\cup_{g'\circ j} CY)\cup_{g'\cup C1_Y} C(Y'\cup_j CY)  
}
$$
\end{lemma}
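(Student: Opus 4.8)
The plan is to exploit the functoriality of the mapping cone construction together with Puppe's result that $q_f'$ is a homotopy equivalence. First I would check that the diagram is homotopy commutative. The top-left square commutes strictly since $g'\circ j = g'\circ j$ and $i_j$, $i_{g'\circ j}$ are the canonical injections; the map $g'\cup C1_Y$ is the induced map on cones coming from the commutative square with vertical maps $j$ and $g'$ (this is exactly the ``$b\cup Ca$'' construction recalled in Section~2 with $a=1_Y$, $b=g'$). The lower square and the two triangles involve $Z\cup_{g'}CY'$ and the iterated cone $(Z\cup_{g'\circ j}CY)\cup_{g'\cup C1_Y}C(Y'\cup_j CY)$; here $1_Z\cup Cj$ is the induced map on cones for the square with vertical maps $j:Y\to Y'$ and $1_Z:Z\to Z$ over $g'\circ j$ and $g'$, and $i_{g'\circ j}\cup Ci_j$ is the induced map on cones for the square with vertical maps $i_j:Y\to Y'\cup_j CY$ and $i_{g'\circ j}:Z\to Z\cup_{g'\circ j}CY$. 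That the triangle $(1_Z\cup Cj)\circ i_{g'\circ j}=i_{g'}$ and the triangle $i_{g'\circ j}\cup Ci_j$ composed with $i_{g'\circ j}$ equals $i_{g'\cup C1_Y}\circ i_{g'\circ j}$ are routine from the definitions of induced maps on mapping cones; homotopy commutativity of the outer square then follows.

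The heart of the matter is showing $i_{g'\circ j}\cup Ci_j$ is a homotopy equivalence. The key observation is that the pair $\bigl(i_j:Y\to Y'\cup_j CY,\ Y\xrightarrow{=}Y\bigr)$ exhibits, up to the standard identification, the situation where one is attaching a cone twice along a map whose target already contains that cone as a cofibre. Concretely, I would apply the identity $q_f'\circ i_{i_f}=q_f$ from Section~2 in the form: for the map $f=g'\circ j:Y\to Z$, the injection $i_{i_f}:Z\cup_f CY\to (Z\cup_f CY)\cup_{i_f}CZ$ and the collapse $q_f':(Z\cup_f CY)\cup_{i_f}CZ\to \Sigma Y$ give $q_{g'\circ j}=q_{g'\circ j}'\circ i_{i_{g'\circ j}}$ with $q_{g'\circ j}'$ a homotopy equivalence. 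The map $i_{g'\circ j}\cup Ci_j$ has target $(Z\cup_{g'\circ j}CY)\cup_{g'\cup C1_Y}C(Y'\cup_j CY)$, which I would compare to $(Z\cup_{g'\circ j}CY)\cup_{i_{g'\circ j}}C(Z\cup_{g'\circ j}CY)$ via a homotopy equivalence induced by the homotopy equivalence $g'\cup C1_Y: Y'\cup_j CY\simeq Z\cup_{g'\circ j}CY$ — which itself is a homotopy equivalence because $i_j$ and $i_{g'\circ j}$ are cofibrations between $w$-spaces and $g'\cup C1_Y$ restricts compatibly; here one can invoke the gluing lemma for cofibrations or Proposition~2.1 after recognizing $g'\cup C1_Y$ as a map under $Y$. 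Then $i_{g'\circ j}\cup Ci_j$ becomes, up to these equivalences, the canonical inclusion $i_{i_{g'\circ j}}$ of a space into its own mapping cone along the identity-attached cone; such an inclusion is a homotopy equivalence because the attached cone $C(Z\cup_{g'\circ j}CY)$ deformation-retracts appropriately (equivalently, $q'$ being an equivalence forces $i_{i}$ to be one via the cofibre sequence, since the cofibre $\Sigma Y$ of $i_{i_{g'\circ j}}$ is killed).

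The main obstacle I expect is bookkeeping: correctly identifying which square each ``induced map on cones'' comes from, and checking that the homotopy equivalence $g'\cup C1_Y:Y'\cup_j CY\to Z\cup_{g'\circ j}CY$ induces a homotopy equivalence on the outer iterated cones in a way compatible with the injections $i_{g'\circ j}$ — i.e., that the relevant square of cofibrations commutes up to homotopy under the right subspace and Proposition~2.1 (or the gluing lemma) applies. A clean way to organize this is: (i) record the three induced-map squares and verify strict or homotopy commutativity of every cell in the diagram; (ii) prove $g'\cup C1_Y$ is a homotopy equivalence (cofibration gluing, using that $Y,Y',Z$ and the cones are $w$-spaces by Corollary~2.3); (iii) conclude that $i_{g'\circ j}\cup Ci_j$ is homotopic to the composite of that equivalence with $i_{i_{g'\circ j}}$, and finish by noting $i_{i_{g'\circ j}}$ is a homotopy equivalence since its mapping cone collapse $q_{g'\circ j}'$ is one (Puppe, \cite[Satz 3]{P}) and hence its cofibre $\Sigma Y$ is contractible in the relevant sense, forcing $i_{i_{g'\circ j}}$ itself to be an equivalence. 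Step (ii) is where the well-pointedness hypotheses and Proposition~2.1 do the real work, and it is the step most likely to need care.
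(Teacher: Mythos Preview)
Your approach has a genuine gap at the crucial step. You claim that $g'\cup C1_Y:Y'\cup_j CY\to Z\cup_{g'\circ j}CY$ is a homotopy equivalence, and propose to deduce this from Proposition~2.1 or the gluing lemma. But neither tool gives this: Proposition~2.1 takes as \emph{input} that a map is a homotopy equivalence and upgrades it to an equivalence under~$A$, so it cannot produce the equivalence you need. The gluing lemma would require the map of pushout diagrams $(Y'\leftarrow Y\to CY)\to(Z\leftarrow Y\to CY)$ to consist of homotopy equivalences, i.e.\ would require $g':Y'\to Z$ itself to be a homotopy equivalence---and $g'$ is an arbitrary map. A concrete counterexample: take $Y=Y'=\{*\}$, $j=1$; then $g'\cup C1_Y$ is just $g':\{*\}\to Z$, which is almost never an equivalence. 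Your subsequent claim that $i_{i_{g'\circ j}}$ is a homotopy equivalence is likewise false in general: its target is $(Z\cup_{g'\circ j}CY)\cup_{i_{g'\circ j}}CZ\simeq\Sigma Y$, which need not be equivalent to the source $Z\cup_{g'\circ j}CY$.

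The paper's proof takes a completely different, hands-on route: it writes down an explicit candidate inverse $\psi:(Z\cup_{g}CY)\cup_{h}C(Y'\cup_j CY)\to Z\cup_{g'}CY'$ (collapsing the double-cone coordinates via $y\wedge s\wedge t\mapsto j(y)\wedge(s+(1-s)t)$), checks $\psi\circ\varphi=1$ on the nose, and then builds an explicit homotopy $\varphi\circ\psi\simeq 1$ by first constructing a map $v:I^3\to I^2$ using a retraction of the cube onto five of its faces. If you want a conceptual reason this works, the right observation is not that $g'\cup C1_Y$ is an equivalence, but that the square with horizontal maps $g'$, $g'\cup C1_Y$ and vertical maps $i_j$, $i_{g'\circ j}$ is a \emph{pushout} square (by pasting of pushouts); in a pushout square the induced map on horizontal cofibres is an equivalence, which is exactly the statement that $\varphi=i_{g'\circ j}\cup Ci_j$ is one. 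That is a valid alternative argument, but it is not the one you sketched.
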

\begin{proof}
Obviously three squares are commutative and $(1_Z\cup Cj)\circ i_{g'\circ j}=i_{g'}$. 
For simplicity, we set 
$$
g=g'\circ j, \quad h=g'\cup C1_Y, \quad 
k=1_Z\cup Cj,\quad \varphi=i_{g'\circ j}\cup Ci_j. 
$$
We should prove that 
$i_h\simeq \varphi\circ k$ and $\varphi$ is a homotopy equivalence. 
Let $z\in Z,\ y\in Y,\ y'\in Y'$ and $s,t,u\in I$. 
We define
$$
w:I\times I\to I,\quad G:(Z\cup_g CY)\times I\to (Z\cup_g CY)\cup_h C(Y'\cup_j CY)
$$ 
by
\begin{gather*}
w(s,t)=\begin{cases} 0 & s\le t\\ 2s-2t & \frac{s}{2}\le t\le s\\ s & t\le\frac{s}{2}\end{cases},\\  
G(z,t)=z,\quad G(y\wedge s, t)=y\wedge w(s,t)\wedge w(s,1-t).
\end{gather*}
Then $G:i_h\simeq \varphi\circ k\ \text{rel}\ Z$. 
In the rest of the proof we prove that $\varphi$ is a homotopy equivalence. 
Define $\psi:(Z\cup_g CY)\cup_h C(Y'\cup_j CY)\to Z\cup_{g'} CY'$ by 
\begin{gather*}
\psi(z)=z,\quad \psi(y\wedge t)=j(y)\wedge t,\quad 
\psi(y'\wedge t)=y'\wedge t,\\
\psi(y\wedge s\wedge t)=j(y)\wedge(s+(1-s)t).
\end{gather*}
As is easily seen, $\psi$ is well-defined, continuous, and $\psi\circ\varphi=1_{C_{g'}}$. 
We will show $1_{C_h}\simeq \varphi\circ\psi$. 
We have 
\begin{gather*}
\varphi\circ\psi(z)=z, \quad \varphi\circ\psi(y\wedge t)=j(y)\wedge t=y\wedge 0\wedge t, 
\quad \varphi\circ\psi(y'\wedge t)=y'\wedge t, \\
\varphi\circ\psi(y\wedge s\wedge t)=j(y)\wedge (s+(1-s)t)=y\wedge 0\wedge(s+(1-s)t).
\end{gather*}
Thus it suffices to construct a map 
$$H:\big((Z\cup_g CY)\cup_h C(Y'\cup_j CY)\big)\times I\to (Z\cup_g CY)\cup_h C(Y'\cup_j CY)$$ 
such that 
\begin{gather*}
H(z,u)=z,\quad H(y'\wedge t,u)=y'\wedge t,\\
H(y\wedge 0\wedge t,u)=H(j(y)\wedge t,u)=j(y)\wedge t=y\wedge 0\wedge t, \\ 
H(y\wedge t,0)=y\wedge t=y\wedge t\wedge 0,\quad  H(y\wedge t,1)=y\wedge 0\wedge t,\\
H(y\wedge s\wedge t,0)=y\wedge s\wedge t,\quad  
 H(y\wedge s\wedge t,1)=y\wedge 0\wedge(s+(1-s)t). 
\end{gather*}
The space 
$
K=I\times I\times\{0\}\cup \{0\}\times I\times I\cup I\times I\times\{1\}\cup I\times\{1\}\times I\cup \{1\}\times I\times I
$ 
is a retract of $I\times I\times I$. 
Indeed a retraction $r:I\times I\times I\to K$ is defined as follows: 
for $P\in I\times I\times I$, $r(P)$ is the intersection of $K$ and the half line which starts from 
$(\frac{1}{2},-\frac{1}{2},\frac{1}{2})$ and passes through $P$. 
Define $v':K\to I\times I$ by
\begin{gather*}
v'(s,t,0)=(s,t), \quad v'(0,t,u)=(0,t),\quad v'(s,t,1)=(0,s+(1-s)t),\\
 v'(s,1,u)=\begin{cases} (0,1) & s\le u\\ (2s-2u,1) & \frac{s}{2}\le u\le s\\ (s,1) & u\le \frac{s}{2}\end{cases},\\ 
v'(1,t,u)=\begin{cases} (1,t) & u\le  \frac{t}{2}\\ (1,2u) & \frac{t}{2}\le u\le \frac{1}{2}\\ (2-2u,1) & \frac{1}{2}\le u\le 1\end{cases}.
\end{gather*}
Then $v'$ is well-defined and continuous. 
Set $v=v'\circ r:I\times I\times I\to I\times I$. 
Then 
\begin{gather*}
v(\{1\}\times I\times I\cup I\times\{1\}\times I)\subset \{1\}\times I\cup I\times\{1\},\\
v(0,t,u)=(0,t),\quad v(s,t,0)=(s,t),\quad v(s,t,1)=(0,s+(1-s)t) .
\end{gather*}
Write $v(s,t,u)=(v_1(s,t,u),v_2(s,t,u))$ and define $H$ by 
\begin{gather*}
H(z,u)=z,\quad H(y\wedge s,u)=y\wedge v_1(s,0,u)\wedge v_2(s,0,u),\\
 H(y'\wedge t,u)=y'\wedge t,\quad 
H(y\wedge s\wedge t,u)=y\wedge v_1(s,t,u)\wedge v_2(s,t,u).
\end{gather*}
Then $H$ satisfies the desired properties. 
Therefore $\varphi$ is a homotopy equivalence. 
This completes the proof.
\end{proof}

\begin{defi}[(9) of \cite{P}]
Given a homotopy commutative square and a homotopy 
\begin{equation}
\begin{CD}
X@>f>>Y\\
@VaVV @VVbV\\
X'@>f'>>Y'
\end{CD}
\quad ,\quad J:b\circ f\simeq  f'\circ a,
\end{equation}
we define $\Phi(f,f',a,b;J):Y\cup_f CX\to Y'\cup_{f'}CX'$ by 
\begin{gather*}
\Phi(f,f',a,b;J)(y)=b(y),\\
\Phi(f,f',a,b;J)(x\wedge s)=\begin{cases} J(x,2s) & 0\le s\le \frac{1}{2}\\ a(x)\wedge (2s-1) & \frac{1}{2}\le s\le 1\end{cases}.
\end{gather*}
Given a homotopy $K:b\circ f\simeq f'\circ a$, 
if there is a free map $\varphi : X\times I\times I\to Y'$ such that 
$\varphi(x,s,0)=J(x,s)$, $\varphi(x,s,1)=K(x,s)$, $\varphi(*,s,t)=*$, $\varphi(x,0,t)=b\circ f(x)$, 
and $\varphi(x,1,t)=f'\circ a(x)$ for every $x\in X$ and $s,t\in I$, 
then we write $J\overset{X}{\simeq}K$ or $\varphi:J\simeq K$. 
\end{defi}

\begin{prop} 
Suppose that (3.1) is given. 
\begin{enumerate}
\item[\rm(1)]$($\cite[Hilfssatz 7]{P}$)$
\begin{enumerate}
\item[\rm(a)] The following diagram is homotopy commutative such that 
the middle square is commutative. 
$$
\xymatrix{
X \ar[r]^-f \ar[d]_-a & Y \ar[r]^-{i_f} \ar[d]_-b & Y\cup_f CX \ar[r]^-{q_f} \ar[d]_-{\Phi(f,f',a,b;J)} & \Sigma X \ar[d]_-{\Sigma a}\\
X' \ar[r]^-{f'} & Y' \ar[r]^-{i_{f'}} & Y'\cup_{f'}CX' \ar[r]^-{q_{f'}} & \Sigma X'
}
$$
\item[\rm(b)] In the following diagram, the first square is commutative and the second square is homotopy commutative. 
$$
\xymatrix{
Y\cup_f CX \ar[r]^-{i_{i_f}} \ar[d]_-{\Phi(f,f',a,b;J)}  
& (Y\cup_f CX)\cup_{i_f}CY \ar[rr]^-{q_f'}  \ar[d]^-{\Phi(i_f,i_{f'},b,\Phi(f,f',a,b;J);1_{i_{f'}\circ b})} & & \Sigma X \ar[d]^-{\Sigma a} \\
Y'\cup_{f'}CX' \ar[r]^-{i_{i_{f'}}} & (Y'\cup_{f'} CX')\cup_{i_{f'}}CY' \ar[rr]^-{q_{f'}'} &  & \Sigma X'
}
$$
Also $\Phi(i_f,i_{f'},b,\Phi(f,f',a,b;J);1_{i_{f'}\circ b})\simeq \Phi(f,f',a,b;J)\cup Cb$. 
\item[\rm(c)] If $a$ and $b$ are homotopy equivalences, then $\Phi(f,f',a,b;J)$ is a homotopy equivalence. 
\item[\rm(d)] If furthermore $a':X'\to X'', b':Y'\to Y'', f'':X''\to Y''$ with $J':b'\circ f'\simeq f''\circ a'$ are given, then 
$$
\Phi(f',f'',a',b';J')\circ \Phi(f,f',a,b;J)\simeq \Phi(f,f'',a'\circ a,b'\circ b;(J'\overline{\circ}1_a)\bullet(1_{b'}\overline{\circ}J)).
$$
\end{enumerate}
\item[\rm(2)] Define $e_a:\Sigma X\to \Sigma X'$ by $e_a(x\wedge\overline{t})
=\begin{cases} a(x)\wedge\overline{0} & 0\le t\le\frac{1}{2}\\ a(x)\wedge\overline{2t-1} & \frac{1}{2}\le t\le 1\end{cases}$. 
Then $e_a\simeq \Sigma a$ and $q_{f'}'\circ (\Phi(f,f',a,b;J)\cup Cb)=e_a\circ q_f'\simeq \Sigma a\circ q_f'$. 
\item[\rm(3)] If the square in (3.1) is strictly commutative, then $\Phi(f,f',a,b;1_{b\circ f})\linebreak\simeq b\cup Ca$. 
\item[\rm(4)]{\rm (\cite[p.315]{P})} For homotopies $a_t:X\to X'$ and $b_t:Y \to Y'$, if there exists a homotopy 
$J^t : b_t\circ f\simeq f'\circ a_t$ for every $t\in I$ such that the function 
$X\times I\times I\to Y',\ (x,s,t)\mapsto J^t(x,s),$ is continuous, then the function 
$
\Phi:(Y\cup_f CX)\times I\to Y'\cup_{f'}CX',\  (z,t)\mapsto \Phi(f,f',a_t,b_t;J^t)(z),
$ 
is continuous and so 
$$\Phi(f,f',a_0,b_0;J^0)\simeq \Phi(f,f',a_1,b_1;J^1).$$

\item[\rm(5)] If $K:b\circ f\simeq f'\circ a$ satisfies $J\overset{X}{\simeq} K$, then
$\Phi(f,f',a,b;J) \simeq \Phi(f,f',a,b;K)$ as elements of $\mathrm{TOP}^Y(i_f,i_{f'}\circ b)$. 
\end{enumerate}
\end{prop}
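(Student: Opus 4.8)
The plan is to dispose of parts (1) and (4) by citation --- (1) is \cite[Hilfssatz 7]{P} and (4) is the construction on \cite[p.315]{P} --- and to prove (2), (3) and (5) by writing down explicit maps and homotopies directly from the formula defining $\Phi(f,f',a,b;-)$, from the descriptions of the quotient maps $q_f$ and $q_f'$, and from $Cb$, and then checking in each case that the piecewise formula is well defined on the relevant quotient space (compatible with the identification $f(x)\sim x\wedge 0$ and with the base points) and continuous across its seams. None of these three parts needs anything beyond this bookkeeping.

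For (2), one first observes $e_a\simeq\Sigma a$ by the homotopy $H\colon\Sigma X\times I\to\Sigma X'$, $H(x\wedge\overline{t},u)=a(x)\wedge\overline{\phi_u(t)}$, with $\phi_u(t)=0$ for $t\le u/2$ and $\phi_u(t)=(t-u/2)/(1-u/2)$ for $t\ge u/2$; then $\phi_0(t)=t$ gives $H_0=\Sigma a$ and $\phi_1$ gives $H_1=e_a$. For the equality $q_{f'}'\circ(\Phi(f,f',a,b;J)\cup Cb)=e_a\circ q_f'$ one evaluates both sides on the three kinds of points of $(Y\cup_f CX)\cup_{i_f}CY$: on $Y$ and on the outer cone $CY$ both sides are the base point of $\Sigma X'$ --- on the left because $\Phi\cup Cb$ sends these into $Y'$ and $CY'$, which $q_{f'}'$ collapses, on the right because $q_f'$ collapses $Y$ and $CY$ directly; on $x\wedge s\in CX$ with $s\le\frac12$ both sides are again the base point (here $\Phi$ sends $x\wedge s$ to $J(x,2s)\in Y'$, while $e_a(x\wedge\overline{s})=a(x)\wedge\overline{0}=*$), and for $s\ge\frac12$ both sides equal $a(x)\wedge\overline{2s-1}$. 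The asserted homotopy $q_{f'}'\circ(\Phi\cup Cb)\simeq\Sigma a\circ q_f'$ then follows from $e_a\simeq\Sigma a$.

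For (3), substituting the constant homotopy into the formula gives $\Phi(f,f',a,b;1_{b\circ f})(x\wedge s)=bf(x)=f'a(x)=a(x)\wedge 0$ for $s\le\frac12$ and $=a(x)\wedge(2s-1)$ for $s\ge\frac12$, so $\Phi(f,f',a,b;1_{b\circ f})=(b\cup Ca)\circ\rho$, where $\rho\colon Y\cup_f CX\to Y\cup_f CX$ is the identity on $Y$ and $\rho(x\wedge s)=x\wedge\max(0,2s-1)$ on $CX$ (well defined since $\rho(x\wedge0)=x\wedge0=f(x)$); a linear reparametrization of the cone coordinate gives $\rho\simeq 1$ rel $Y$, whence $\Phi(f,f',a,b;1_{b\circ f})\simeq b\cup Ca$. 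For (5), let $\varphi\colon X\times I\times I\to Y'$ witness $J\overset{X}{\simeq}K$, so that $\varphi(x,s,0)=J(x,s)$, $\varphi(x,s,1)=K(x,s)$, $\varphi(*,s,u)=*$, $\varphi(x,0,u)=bf(x)$ and $\varphi(x,1,u)=f'a(x)$, and define $G\colon(Y\cup_f CX)\times I\to Y'\cup_{f'}CX'$ by $G(y,u)=b(y)$ on $Y$, and on $CX$ by $G(x\wedge s,u)=\varphi(x,2s,u)$ for $s\le\frac12$ and $G(x\wedge s,u)=a(x)\wedge(2s-1)$ for $s\ge\frac12$. The five conditions on $\varphi$ are exactly what legitimize $G$: $\varphi(x,1,u)=f'a(x)=a(x)\wedge0$ gives continuity across $s=\frac12$, $\varphi(x,0,u)=bf(x)$ gives compatibility with $f(x)\sim x\wedge0$, and $\varphi(*,s,u)=*$ (together with $a(x_0)\wedge(2s-1)=*$) gives well-definedness at the base point. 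Since $G_0=\Phi(f,f',a,b;J)$, $G_1=\Phi(f,f',a,b;K)$ and $G_u\circ i_f=i_{f'}\circ b$ for all $u$, the map $G$ is a homotopy in $\mathrm{TOP}^Y(i_f,i_{f'}\circ b)$, proving (5).

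The one place where real care is needed is the computation in (2): inside the iterated mapping cone $(Y\cup_f CX)\cup_{i_f}CY$ and inside its image under $\Phi(f,f',a,b;J)\cup Cb$ one must keep straight which coordinate is the inner suspension coordinate that survives $q_f'$ (resp.\ $q_{f'}'$) and which subspaces get collapsed. Once this bookkeeping is organized, each of (2), (3) and (5) drops out, and I do not expect any conceptual obstacle.
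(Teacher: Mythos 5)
Your proposal is correct and in essence follows the same route as the paper: (1) is cited to Puppe, (2) is handled by an explicit homotopy $e_a\simeq\Sigma a$ together with a direct check of the equality on the three kinds of points, and (3) and (5) are done by writing down explicit homotopies and verifying well-definedness on the quotients. The one genuine (but harmless) deviation is in (5): the paper deduces it from (4) by setting $a_t=a$, $b_t=b$, $J^t(x,s)=\varphi(x,s,t)$, whereas you construct the homotopy $G$ by hand; unwinding the paper's invocation of (4) produces precisely your $G$, and your direct check that $G_u\circ i_f=i_{f'}\circ b$ is exactly what is needed to get the homotopy in $\mathrm{TOP}^Y(i_f,i_{f'}\circ b)$. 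For (4) you merely cite Puppe; the paper does give a one-line proof (define $\xi$ on $(Y+X\times I)\times I$ and observe it factors through the quotient), but since the statement itself carries the citation \cite[p.315]{P}, relying on it is defensible. No gaps.
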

\begin{proof}
We refer a proof of (1) to \cite{P}. 

Define $v:I\times I\to I$ and $F:\Sigma X\times I\to \Sigma X'$ by 
\begin{gather*}
v(t,u)=\begin{cases} 0 & u\le -2t+1\\ t+u/2-1/2 & 2t-1\le u\ \text{and}\, -2t+1\le u\\ 2t-1 & u\le 2t-1\end{cases},\\ 
 F(x\wedge\overline{t},u)=a(x)\wedge\overline{v(t,u)}.
\end{gather*}
Then $F: e_a\simeq \Sigma a$. 
As is easily seen, $q_{f'}'\circ (\Phi(f,f',a,b;J)\cup Cb)=e_a\circ q_f'$. 
Hence we obtain (2). 

(3) can be easily proved. 

For (4), define $\xi: (Y+X\times I)\times I\to Y'\cup_{f'}CX'$ by 
$$
\xi(y,t)=b_t(y),\quad \xi(x,s,t)=\begin{cases} J^t(x,2s) & 0\le s\le \frac{1}{2}\\ a_t(x)\wedge(2s-1) & \frac{1}{2}\le s\le 1\end{cases}. 
$$
Then it is continuous and satisfies $\xi=\Phi\circ(q\times 1_I)$, where $q:Y+X\times I\to Y\cup_f CX$ is the quotient map. 
Hence $\Phi$ is continuous. 
This proves (4). 

(5) is obtained by taking $a_t=a,\ b_t=b,\ J^t(x,s)=\varphi(x,s,t)$ in (4), where $\varphi:J\simeq K$. 
\end{proof}

\section{Homotopy cofibres}

In this section we will work in $\mathrm{TOP}^*$. 
Hence $i_f:Y\to Y\cup_f CX$ is always a cofibration for every map $f:X\to Y$. 

\begin{defi}
A map $j:Y\to Z$ is a {\it homotopy cofibre} of a map $f:X\to Y$ if $j$ is a cofibration and there exists 
a homotopy equivalence $a : Z\to Y\cup_f CX$ with $a\circ j\simeq i_f$.
\end{defi}

The notion ``homotopy cofibre'' is not new. 
Indeed we have the following.

\begin{lemma}
Given maps $f:X\to Y$ and $j:Y\to Z$, $j$ is a homotopy cofibre of $f$ if and only if 
$j:Y\to Z$ is a cofibration and 
$X\overset{f}{\longrightarrow}Y\overset{j}{\longrightarrow}Z$ is a cofibre sequence, that is, 
there exists a homotopy commutative diagram with $b,c,d$ homotopy equivalences:
$$
\xymatrix{
X \ar[r]^-f \ar[d]_-b^\simeq & Y \ar[r]^-j \ar[d]_-c^\simeq & Z \ar[d]_-d^\simeq \\
X' \ar[r]^-{f'} & Y' \ar[r]^-{i_{f'}} & Y'\cup_{f'}CX'
}
$$
\end{lemma}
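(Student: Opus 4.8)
The plan is to prove both implications of the iff, using Proposition~3.3 (Puppe's $\Phi$-construction) as the main engine for transporting cofibre sequences along homotopy equivalences.

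\medskip

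\textbf{($\Rightarrow$).} Suppose $j:Y\to Z$ is a homotopy cofibre of $f$, so $j$ is a cofibration and there is a homotopy equivalence $a:Z\to Y\cup_f CX$ with $a\circ j\simeq i_f$. I would exhibit the required homotopy commutative diagram with $b=1_X$, $c=1_Y$, $f'=f$, and $d=a$: the left square is then literally $f=f$, and the right square is $a\circ j\simeq i_f=i_{f'}$, which holds by hypothesis. Since $1_X$, $1_Y$, and $a$ are homotopy equivalences, this is exactly a cofibre sequence in the stated sense. So this direction is essentially immediate from the definition.

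\medskip

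\textbf{($\Leftarrow$).} Conversely, assume $j$ is a cofibration and we are given the homotopy commutative diagram with $b,c,d$ homotopy equivalences. I want to produce a homotopy equivalence $Z\to Y\cup_f CX$ compatible (up to homotopy) with the cofibrations. First, from the homotopy commuting left square $c\circ f\simeq f'\circ b$, pick a homotopy $J:c\circ f\simeq f'\circ b$ and form $\Phi(f,f',b,c;J):Y\cup_f CX\to Y'\cup_{f'}CX'$. By Proposition~3.3(1)(c), since $b$ and $c$ are homotopy equivalences, $\Phi(f,f',b,c;J)$ is a homotopy equivalence; and by Proposition~3.3(1)(a) the middle square $\Phi(f,f',b,c;J)\circ i_f\simeq i_{f'}\circ c$ commutes (it is in fact strictly commutative). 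Now we have two homotopy equivalences into $Y'\cup_{f'}CX'$: namely $d:Z\to Y'\cup_{f'}CX'$ (with $d\circ j\simeq i_{f'}\circ c$, from the right square composed with... wait, the right square of the given diagram is $i_{f'}\circ c\simeq d\circ j$) and $\Phi(f,f',b,c;J):Y\cup_f CX\to Y'\cup_{f'}CX'$ (with $\Phi\circ i_f\simeq i_{f'}\circ c$). Composing, set $a_0=\Phi(f,f',b,c;J)^{-1}\circ d:Z\to Y\cup_f CX$, a homotopy equivalence, and check $a_0\circ j\simeq \Phi^{-1}\circ d\circ j\simeq \Phi^{-1}\circ i_{f'}\circ c\simeq \Phi^{-1}\circ\Phi\circ i_f\simeq i_f$. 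Thus $a_0$ witnesses that $j$ is a homotopy cofibre of $f$.

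\medskip

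\textbf{Where the care is needed.} The only genuinely delicate point is bookkeeping with the homotopy $a_0\circ j\simeq i_f$: one must ensure $\Phi^{-1}\circ\Phi\simeq 1$ is used correctly and that all the intermediate homotopies compose, which is routine since $[-,-]$-level reasoning suffices (we only need $a_0\circ j\simeq i_f$, not a rel-$Y$ statement). A secondary subtlety: strictly speaking the definition of homotopy cofibre does not ask $a\circ j\simeq i_f$ rel anything, so no compatibility of homotopies is required, which keeps the argument clean. One should also note at the outset that the diagram in the statement lives in $\mathrm{TOP}^*$ and, as recalled at the start of Section~4, $i_{f'}$ is automatically a cofibration, so ``cofibre sequence'' is the honest Puppe notion and Proposition~3.3 applies verbatim. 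I expect the write-up to be short.
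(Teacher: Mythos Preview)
Your proposal is correct and matches the paper's proof essentially verbatim: the paper dismisses the ($\Rightarrow$) direction as obvious and, for ($\Leftarrow$), chooses $J:c\circ f\simeq f'\circ b$, forms $\Phi=\Phi(f,f',b,c;J)$ (a homotopy equivalence with $\Phi\circ i_f=i_{f'}\circ c$ by Proposition~3.3(1)), and sets $a=\Phi^{-1}\circ d$, then checks $a\circ j\simeq i_f$ exactly as you do. The only cosmetic difference is that the paper uses the strict equality $\Phi\circ i_f=i_{f'}\circ c$ (the middle square in 3.3(1)(a) is commutative on the nose), which you also note parenthetically.
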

\begin{proof}
It suffices to prove ``if''-part. 
Let $J:c\circ f\simeq f'\circ b$. 
Then $\Phi=\Phi(f,f',b,c;J):Y\cup_f CX\to Y'\cup_{f'}CX'$ is a homotopy equivalence with $\Phi\circ i_f=i_{f'}\circ c$ 
by Proposition~3.3(1)(c). 
Set $a=\Phi^{-1}\circ d:Z\to Y\cup_f CX$. 
Then $a$ is a homotopy equivalence such that 
$a\circ j=\Phi^{-1}\circ d\circ j\simeq  \Phi^{-1}\circ i_{f'}\circ c= \Phi^{-1}\circ\Phi\circ i_f\simeq i_f$. 
Hence $j$ is a homotopy cofibre of $f$. 
\end{proof}

\begin{lemma}
Let $j:Y\to Z$ be a homotopy cofibre of $f:X\to Y$. 
\begin{enumerate}
\item[\rm(1)] There is a homotopy equivalence $a\in\mathrm{TOP}^Y(j,i_f)$ 
and its homotopy inverse $a^{-1}\in\mathrm{TOP}^Y(i_f,j)$ such that 
$a^{-1}\cup C1_Y:(Y\cup_f CX)\cup_{i_f}CY\to Z\cup_jCY$ is a homotopy inverse of $a\cup C1_Y$, that is, 
\begin{equation}
\left\{\begin{array}{@{\hspace{0.6mm}}l}
(a^{-1}\cup C1_Y)\circ (a\cup C1_Y) \simeq 1_{Z\cup_j CY},\\
(a\cup C1_Y)\circ(a^{-1}\cup C1_Y) \simeq 1_{(Y\cup_f CX)\cup_{i_f} CY}.
\end{array}\right.
\end{equation}
\item[\rm(2)] If $f':X\to Y$ satisfies $f\simeq f'$, then $j$ is a homotopy cofibre of $f'$. 
\item[\rm(3)] If $h:X\to X$ is a homotopy equivalence, then 
$j$ is a homotopy cofibre of $f\circ h$. 
\item[\rm(4)] If $f=g\circ h:X\overset{h}{\rightarrow}X'\overset{g}{\rightarrow}Y$ with $h$ a homotopy equivalence, 
then $j$ is a homotopy cofibre of $g$. 
\item[\rm(5)] If $j$ is a free cofibration, then $\Sigma^\ell j:\Sigma^\ell Y\to \Sigma^\ell Z$ is a 
homotopy cofibre of $\Sigma^\ell f:\Sigma^\ell X\to \Sigma^\ell Y$ for any positive integer $\ell$. 
\item[\rm(6)] If $h:Y\to Y'$ and $k:Z\to Z'$ are homeomorphisms, then 
$j'=k\circ j\circ h^{-1}:Y'\to Z'$ is a homotopy cofibre of $h\circ f:X\to Y'$. 
\item[\rm(7)] Given a map $g:Y\to W$, $g$ can be extended to Z if and only if $g\circ f\simeq *$. 
\end{enumerate}
\end{lemma}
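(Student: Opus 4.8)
\emph{Overview.} The plan is to reduce parts (2)--(4), and (6), to the cofibre‑sequence characterization of Lemma~4.2, to obtain (5) from the suspension homeomorphism $\psi^\ell_f$, to prove (7) by the standard extension dance, and to put the real work into (1). Throughout write $a:Z\to Y\cup_f CX$ for the homotopy equivalence with $a\circ j\simeq i_f$ supplied by Definition~4.1.

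\emph{Parts (2), (3), (4).} For each I would exhibit a diagram of the shape of Lemma~4.2 whose bottom row is $X\overset{f}{\to}Y\overset{i_f}{\to}Y\cup_f CX$, whose right‑hand vertical is $d=a$, whose middle vertical is $c=1_Y$, and whose left‑hand vertical is: $b=1_X$ for (2); $b=h$ for (3); $b=h^{-1}$, a homotopy inverse of $h$, for (4). In each case the left square commutes up to homotopy ($f'\simeq f$ for (2); $f\circ h=f\circ h$ for (3); $f\circ h^{-1}=g\circ h\circ h^{-1}\simeq g$ for (4)), the right square is the relation $a\circ j\simeq i_f$, and $b,c,d$ are homotopy equivalences, so Lemma~4.2 gives the assertion at once.

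\emph{Parts (5) and (6).} For (5): since $j$ is a free cofibration, iterating Corollary~2.3(1) shows $\Sigma^\ell j$ is a free cofibration, hence a pointed cofibration by the remark of Section~2; applying $\Sigma^\ell$ to $a$ gives a homotopy equivalence $\Sigma^\ell a$ with $\Sigma^\ell a\circ\Sigma^\ell j\simeq\Sigma^\ell i_f$, and since the defining formulas of $\psi^\ell_f$ yield $\psi^\ell_f\circ i_{\Sigma^\ell f}=\Sigma^\ell i_f$, the composite $(\psi^\ell_f)^{-1}\circ\Sigma^\ell a:\Sigma^\ell Z\to\Sigma^\ell Y\cup_{\Sigma^\ell f}C\Sigma^\ell X$ witnesses that $\Sigma^\ell j$ is a homotopy cofibre of $\Sigma^\ell f$. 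For (6): the strictly commutative square with top $f$, left $1_X$, bottom $h\circ f$, right $h$ produces $h\cup C1_X:Y\cup_f CX\to Y'\cup_{h\circ f}CX$, a homeomorphism (as $h$ is) with $(h\cup C1_X)\circ i_f=i_{h\circ f}\circ h$; then $a':=(h\cup C1_X)\circ a\circ k^{-1}:Z'\to Y'\cup_{h\circ f}CX$ is a homotopy equivalence with $a'\circ j'=(h\cup C1_X)\circ a\circ j\circ h^{-1}\simeq i_{h\circ f}$, and $j'=k\circ j\circ h^{-1}$ is a cofibration, being a conjugate of one by homeomorphisms.

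\emph{Part (7).} If $\bar g:Z\to W$ extends $g$ over $j$, then $a\circ j\circ f\simeq i_f\circ f\simeq *$ (the cone null‑homotopy $(x,t)\mapsto x\wedge t$), and since $a$ is a homotopy equivalence this forces $j\circ f\simeq *$, whence $g\circ f=\bar g\circ(j\circ f)\simeq *$. Conversely, a null‑homotopy $A:g\circ f\simeq *$ gives the extension $[g,A,f]:Y\cup_f CX\to W$ with $[g,A,f]\circ i_f=g$, so $\bar g:=[g,A,f]\circ a$ satisfies $\bar g\circ j\simeq g$; the homotopy extension property of the cofibration $j$ then deforms $\bar g$ to a map restricting to $g$ on $Y$.

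\emph{Part (1), and the main obstacle.} This is where I expect the real work. First, using that $j$ is a cofibration, I would extend the homotopy $a\circ j\simeq i_f$ to deform $a$ to a homotopic map---still called $a$---with $a\circ j=i_f$ exactly, so that $a\in\mathrm{TOP}^Y(j,i_f)$; it is still a homotopy equivalence. Then Proposition~2.1, applied to the cofibrations $j$ and $i_f$ under $Y$, yields $a^{-1}\in\mathrm{TOP}^Y(i_f,j)$ with $a^{-1}\circ a\overset{Y}{\simeq}1_Z$ and $a\circ a^{-1}\overset{Y}{\simeq}1_{Y\cup_f CX}$. Because these homotopies are constant on the image of $Y$, each stage may be pushed through the construction $-\cup C1_Y$: a $Y$‑homotopy $D_t$ from $a^{-1}\circ a$ to $1_Z$ gives $D_t\cup C1_Y$, a homotopy from $(a^{-1}\circ a)\cup C1_Y=(a^{-1}\cup C1_Y)\circ(a\cup C1_Y)$ to $1_{Z\cup_j CY}$, its continuity in $t$ checked through the quotient map onto $Z\cup_j CY$ exactly as in Proposition~3.3(4); the symmetric argument handles $(a\cup C1_Y)\circ(a^{-1}\cup C1_Y)\simeq 1$. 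The obstacle is thus bookkeeping rather than ideas: verifying that the rectified $a$ and its $Y$‑inverse are compatible with $-\cup C1_Y$, that all maps and homotopies stay pointed, and that Proposition~2.1 applies to $j$ and $i_f$ in the relevant under‑category; no conceptual difficulty beyond Propositions~2.1 and~3.3 is anticipated.
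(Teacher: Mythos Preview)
Your proposal is correct, and for parts (1), (5), (6), (7) it matches the paper's argument essentially word for word: rectify $a$ via the HEP of $j$, invoke Proposition~2.1 for the under-$Y$ inverse, and push the $Y$-constant homotopies through $-\cup C1_Y$; use $(\psi^\ell_f)^{-1}\circ\Sigma^\ell a$ for (5) and $(h\cup C1_X)\circ a\circ k^{-1}$ for (6); and for (7) the paper uses the already-rectified $a$ from (1) so that $[g,A,f]\circ a$ restricts to $g$ on the nose, avoiding your final HEP step, but this is cosmetic.

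The one genuine methodological difference is in (2)--(4). You appeal uniformly to the cofibre-sequence characterization of Lemma~4.2, building in each case a $3\times 2$ ladder with vertical homotopy equivalences $b,\,1_Y,\,a$. The paper instead works directly from Definition~4.1, producing in each case an explicit homotopy equivalence $Z\to Y\cup_{f^{\text{new}}}CX^{\text{new}}$ that strictly commutes with $j$ and $i_{f^{\text{new}}}$: for (2) it is $\Phi(f,f',1_X,1_Y;J)\circ a$ via Proposition~3.3(1)(c); for (3) it is $\Phi(f,f\circ h,h^{-1},1_Y;J)\circ a$; for (4) it is simply $(1_Y\cup Ch)\circ a$. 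Your route is cleaner and more uniform, needing only Lemma~4.2; the paper's route is more explicit and yields a concrete comparison map in $\mathrm{TOP}^Y$ rather than merely the existence statement, which is what the later sections (e.g.\ the constructions in \S6) actually use. Either argument is fine for the lemma as stated.
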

\begin{proof}
(1) Suppose that $a':Z\to Y\cup_f CX$ is a homotopy equivalence and $g:a'\circ j\simeq i_f$ is a homotopy. 
Since $j$ is a cofibration by the assumption, there exists a homotopy $H:Z\times I\to Y\cup_f CX$ with 
$a'=H\circ i_0^Z$ and $g=H\circ (j\times 1_I)$. 
Then the map $a:Z\to Y\cup_f CX, z\mapsto H(z,1),$ is a homotopy equivalence with $a\circ j=i_f$ and so 
$a$ is a homotopy equivalence in $\mathrm{TOP}^Y(j,i_f)$ by Proposition~2.1. 
Let $a^{-1}\in \mathrm{TOP}^Y(i_f,j)$ be a homotopy inverse of $a$. 
Then $a^{-1}\circ i_f=j$ and there exist homotopies $K: a^{-1}\circ a\overset{Y}{\simeq} 1_Z$ and 
$L : a\circ a^{-1}\overset{Y}{\simeq} 1_{Y\cup_fCX}$. 
Hence $(a^{-1}\cup C1_Y)\circ (a\cup C1_Y)=K_0\cup C1_Y\simeq K_1\cup C1_Y=1_{Z\cup_j CY}$ 
and the second equation of (4.1) is obtained similarly. 
This proves (1). 

In the rest of the proof $a:Z\to Y\cup_fCX$ is a homotopy equivalence such that $a\circ j=i_f$. 

(2) Suppose that $J:f\simeq f'$. 
By Proposition 3.3(1), 
$$
\Phi(J):=\Phi(f,f',1_X,1_Y;J):Y\cup_f CX\to Y\cup_{f'}CX
$$
is a homotopy equivalence and $\Phi(J)\circ i_f = i_{f'}$. 
Hence $\Phi(J)\circ a:Z\to Y\cup_{f'}CX$ is a homotopy equivalence and $\Phi(J)\circ a\circ j=i_{f'}$. 
This proves (2). 

(3) Take $J : f\simeq f\circ h\circ h^{-1}$. 
Then $\Phi(f,f\circ h,h^{-1},1_Y;J)$ is a homotopy equivalence and $\Phi(f,f\circ h,h^{-1},1_Y;J)\circ a\circ j=i_{f\circ h}$. 
Hence $j$ is a homotopy cofibre of $f\circ h$. 

(4) Since $(1_Y\cup Ch)\circ a\circ j=(1_Y\cup Ch)\circ i_f=i_g$, (4) follows. 

(5) Suppose that $j$ is a free cofibration. 
Then $\Sigma^\ell  j$ is a free cofibration by Corollary~2.3(1). 
We set $a'=(\psi^\ell _f)^{-1}\circ \Sigma^\ell a$, where $\psi^\ell _f$ is the homeomorphism 
$\Sigma^\ell  Y\cup_{\Sigma^\ell  f}C\Sigma^\ell  X\approx \Sigma^\ell (Y\cup_f CX)$ defined in the section 2. 
Then $a':\Sigma^\ell  Z\to \Sigma^\ell  Y\cup_{\Sigma^\ell  f}C\Sigma^\ell  X$ is a homotopy equivalence 
and $a'\circ \Sigma^\ell  j=i_{\Sigma^\ell  f}$. 
Hence $\Sigma^\ell  j$ is a homotopy cofibre of $\Sigma^\ell  f$.

(6) Since $(h\cup C1_X)\circ a\circ k^{-1} : Z'\to Y'\cup_{h\circ f}CX$ is a homotopy equivalence and 
$(h\cup C1_X)\circ a\circ k^{-1}\circ j'=i_{h\circ f}$, it follows that $j'$ is a homotopy cofibre of $h\circ f$. 
This proves (6). 

(7) Let $a:Z\to Y\cup_f CX$ be a homotopy equivalence such that $a\circ j=i_f$, 
and $a^{-1}$ a homotopy inverse of $a$ such that $a^{-1}\circ i_f=j$. 
If $g\circ f\simeq *$, then there exists $\widetilde{g}:Y\cup_f CX\to W$ such that $\widetilde{g}\circ i_f=g$ and 
$\widetilde{g}\circ a$ is an extension of $g$ to $Z$. 
If $g$ has an extension $g':Z\to W$, then $g\circ f=g'\circ a^{-1}\circ i_f\circ f\simeq g'\circ a^{-1}\circ *=*$. 
\end{proof}

\begin{rem}
A map $a$ in Lemma 4.3(1) is not necessarily unique in the sense of \/ $\overset{Y}{\simeq}$. 
\end{rem}
\begin{proof}
Let $\nabla:\s^1\vee\s^1\to\s^1$ be the folding map. 
Then $\s^1\cup_{\nabla} C(\s^1\vee\s^1)=\s^2$ and 
$i_\nabla:\s^1\to\s^2$ can be identified with $j:\s^1\to\s^2,\ (x,y)\mapsto(x,y,0)$. 
Obviously $j$ is a homotopy cofibre of $\nabla$. 
We set $a:\s^2\to\s^2,\ (x,y,z)\mapsto (x,y,-z)$. 
Then $a,1_{\s^2}$ are homotopy equivalences in $\mathrm{TOP}^{\s^1}(j,i_\nabla)$. 
Their degrees are $-1$ and $1$, respectively. 
Hence $a\not\simeq 1_{\s^2}$ and so $a\overset{\s^1}{\simeq}1_{\s^2}$ does not hold. 
\end{proof}

\begin{lemma}
If $j:Y\to Z$ is a homotopy cofibre of $f:X\to Y$ and if a map $g:Y\to W$ satisfies $g\circ f\simeq *$, then, 
for any homotopy $A:g\circ f\simeq *$, we have 
$$
[g,A,f]\cup C1_Y\simeq (g,A,f)\circ q_f' : (Y\cup_f CX)\cup_{i_f} CY\to W\cup_g CY
$$
and, for any homotopy equivalence $a:Z\to Y\cup_f CX$ satisfying $a\circ j=i_f$, we have 
\begin{equation}
 ([g,A,f]\circ a\cup C1_Y)\circ\omega^{-1}=([g,A,f]\cup C1_Y)\circ(a\cup C1_Y)\circ\omega^{-1}\simeq (g,A,f),
\end{equation}
where $\omega^{-1}$ is a homotopy inverse of $\omega=q_f'\circ (a\cup C1_Y): Z\cup_j CY\to \Sigma X$. 
\end{lemma}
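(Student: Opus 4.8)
The homotopy $[g,A,f]\cup C1_Y\simeq(g,A,f)\circ q_f'$ is the substance of the lemma, so I would prove it first and then deduce the displayed identity formally. For the formal part, note that $[g,A,f]\circ a\cup C1_Y=([g,A,f]\cup C1_Y)\circ(a\cup C1_Y)$ — this is just functoriality of $(-)\cup C1_Y$, legitimate because $a\circ j=i_f$ and $[g,A,f]\circ i_f=g$ make the attaching data match up. Moreover $\omega=q_f'\circ(a\cup C1_Y)$ is a homotopy equivalence: $q_f'$ is one by \cite[Satz~3]{P}, and since $a$ is a homotopy equivalence with $a\circ j=i_f$ it lies in $\mathrm{TOP}^Y(j,i_f)$, so the argument proving Lemma~4.3(1) shows that $a^{-1}\cup C1_Y$ is a homotopy inverse of $a\cup C1_Y$. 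Granting the homotopy statement,
\[
([g,A,f]\circ a\cup C1_Y)\circ\omega^{-1}
=([g,A,f]\cup C1_Y)\circ(a\cup C1_Y)\circ\omega^{-1}
\simeq(g,A,f)\circ q_f'\circ(a\cup C1_Y)\circ\omega^{-1}
=(g,A,f)\circ\omega\circ\omega^{-1}\simeq(g,A,f),
\]
which is the assertion. Only this last step uses that $j$ is a homotopy cofibre; the homotopy statement itself holds for any $f,g$ and any pointed null homotopy $A\colon g\circ f\simeq*$.

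For the homotopy statement I would write down an explicit based homotopy $H\colon\bigl((Y\cup_f CX)\cup_{i_f}CY\bigr)\times I\to W\cup_g CY$ joining $[g,A,f]\cup C1_Y$ to $(g,A,f)\circ q_f'$. Writing a point of the source as $i_f(y)$, or $x\wedge s$ in $CX$, or $y\wedge u$ in $CY$, and a point of $W\cup_g CY$ as $w$ or $y\wedge v$ (so $y\wedge0=g(y)$), put
\[
H(i_f(y),t)=y\wedge t,\qquad H(y\wedge u,t)=y\wedge\bigl(1-(1-u)(1-t)\bigr),
\]
\[
H(x\wedge s,t)=\begin{cases}
f(x)\wedge(t-2s) & \text{if } t\ge 2s,\\
A\!\left(x,\frac{2s-t}{2-t}\right) & \text{if } t\le 2s.
\end{cases}
\]
I would then check the routine compatibilities: the three rules agree under the identifications $x\wedge0=f(x)$ and $y\wedge0=i_f(y)$ (each giving $f(x)\wedge t$, resp.\ $y\wedge t$); the two branches of the $CX$-rule agree on $t=2s$ (both equal $g(f(x))$); and the $CX$-rule is the constant $*$ for $x=x_0$ and for $s=1$ (using $A(x,1)=*$). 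Defining $H$ first on $(Y+CX+CY)\times I$ by the three rules and passing to the quotient then shows $H$ is well-defined and continuous and fixes the base point for all $t$. Finally $H(\,\cdot\,,0)$ gives $g(y)$, $y\wedge u$, $A(x,s)$, i.e.\ $[g,A,f]\cup C1_Y$, while $H(\,\cdot\,,1)$ gives $*$, $*$, and $(g,A,f)(x\wedge\overline{s})=(g,A,f)(q_f'(x\wedge s))$, i.e.\ $(g,A,f)\circ q_f'$.

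The only step that takes thought is the formula on $CX$: one must fill the square $\{(s,t)\}$ so that its four edges read off the bottom map $s\mapsto A(x,s)$ (in $W$), the left cone path $t\mapsto f(x)\wedge t$ (in $CY$), the constant $*$ on the right, and the top map $s\mapsto(g,A,f)(x\wedge\overline{s})$. Traversing the boundary, one sees the resulting loop in $W\cup_g CY$ is, up to reparametrization, a concatenation in which the cone path $t\mapsto f(x)\wedge t$ and the homotopy path $A(x,\cdot)$ each occur immediately followed by their reverses, so it bounds canonically; splitting the square along $t=2s$ into the cone region (filled by the cone path reparametrized to $t-2s$) and its complement (filled by $A(x,\cdot)$ reparametrized to $\frac{2s-t}{2-t}$) is precisely that canonical null homotopy. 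Everything here is natural in $x$ and base-point preserving, so no hypotheses beyond those standing in Section~4 are needed, and the remaining verifications are the boundary-value bookkeeping listed above.
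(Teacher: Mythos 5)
Your proposal is correct and takes essentially the same approach as the paper: you construct an explicit pointed homotopy $H$ from $[g,A,f]\cup C1_Y$ to $(g,A,f)\circ q_f'$ on the pieces $Y$, $CX$, $CY$ of the domain, check compatibility and basepoint-preservation, and then deduce the displayed chain of relations formally. Only the explicit parametrizations differ (the paper uses $y\wedge\max\{s,t\}$ on $CY$ and a piecewise-linear $u(s,t)$ on $CX$, while you use $y\wedge(1-(1-u)(1-t))$ and the $\frac{2s-t}{2-t}$ reparametrization), but both realize the same geometric idea of sliding the null-homotopy across the cone.
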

\begin{proof}
Consider the following diagram. 
$$
\xymatrix{
 & Y \ar@{=}[d] \ar[r]^-j & Z \ar[d]_-a^-\simeq \ar[r]^-{i_j} & Z\cup_j CY 
\ar[d]_-{a\cup C1_Y}^-\simeq \ar@/^/[dr]^-{\omega} &\\
X\ar[r]^-f & Y \ar[dr]_-g \ar[r]^-{i_f} & Y\cup_f CX \ar[d]^-{[g,A,f]} \ar[r]^-{i_{i_f}} & (Y\cup_f CX)\cup_{i_f} CY 
\ar[d]_-{[g,A,f]\cup C1_Y} \ar[r]^-{q_f'}_-\simeq & \Sigma X\ar@/^/[dl]^-{(g,A,f)}\\
& & W \ar[r]_-{i_g} & W\cup_g CY & \\
}
$$
The above diagram is commutative except the right lower triangle. 
Define $u:I\times I\to I$ and $H:((Y\cup_f CX)\cup_{i_f}CY)\times I\to W\cup_g CY$ by 
\begin{gather*}
u(s,t)=\begin{cases} s & s\ge t\\ 2s-t & 2s\ge t\ge s\\ -2s+t & 2s\le t\end{cases},\quad 
H(x\wedge s,t)=\begin{cases} f(x)\wedge u(s,t) & 2s\le t\\ A(x,u(s,t)) & 2s\ge t\end{cases},\\
H(y,t)=y\wedge t,\quad  H(y\wedge s,t)=y\wedge \max\{s,t\}.
\end{gather*}
Then $H:[g,A,f]\cup C1_Y\simeq (g,A,f)\circ q_f'$. 
Hence 
\begin{align*}
&([g,A,f]\circ a  \cup C1_Y)\circ\omega^{-1}=([g,A,f]\cup C1_Y)\circ (a\cup C1_Y)\circ\omega^{-1}\\
&\simeq (g,A,f)\circ q_f'\circ  (a\cup C1_Y)\circ\omega^{-1}=(g,A,f)\circ\omega\circ\omega^{-1}\simeq (g,A,f) .
\end{align*}
\end{proof}

\section{Iterated mapping cones}
In this section we will work in $\mathrm{TOP}^w$. 
Hence for any map $f:X\to Y$ the injection $i_f:Y\to Y\cup_f CX$ is 
in $\mathrm{TOP}^w$ and a free cofibration by Corollary~2.3(2),(3). 

By replacing the words ``$w$-space'' and ``free cofibration'' with 
``$clw$-space'' and ``closed free cofibration'' 
respectively, we can develop consideration of this section similarly in $\mathrm{TOP}^{clw}$. 

We will revise the notion of ``shaft'' of Gershenson \cite{G} and rename it ``iterated mapping cone''. 
Suppose that the diagram   
\begin{equation}
\begin{split}
\begin{CD}
 X_1 @. X_2 @. X_3 @. \cdots @. X_n @. \ \\
 @Vg_1VV @Vg_2VV @Vg_3VV @. @Vg_nVV @. \\
C_1@>j_1>>C_2@>j_2>>C_3@>j_3>>\cdots@>j_{n-1}>>C_n@>j_n>>C_{n+1}
\end{CD}
\end{split}
\end{equation}
is given with $n\ge 1$, where $j_s:C_s\to C_{s+1}$ is a ``free'' cofibration for every $s$. 
We denote the above diagram by 
$$
\mathscr{S}=(X_1,\dots,X_n;C_1,\dots,C_{n+1};g_1,\dots,g_n;j_1,\dots,j_n).
$$
We often add $C_0=\{*\}$ and the inclusion $j_0:C_0\to C_1$ to the above diagram. 

\begin{defi} 
\begin{enumerate}
\item[\rm(1)] The sequence $(g_1,j_1,\dots,j_n)$ is called the {\it edge} of $\mathscr{S}$.  
\item[\rm(2)] $\mathscr{S}$ is a {\it quasi iterated mapping cone} of depth $n$ 
if $C_{s+1}\cup_{j_s}CC_s\simeq \Sigma X_s$ and 
$[X_s,Z]\overset{g_s^*}{\longleftarrow}[C_s,Z]\overset{j_s^*}{\longleftarrow}[C_{s+1},Z]$ is exact 
as a sequence of pointed sets for every space $Z$ and every $s\ge 1$ (cf.\,\cite[p.68]{tD}). 
If we choose a homotopy equivalence $\omega_s:C_{s+1}\cup_{j_s}CC_s\simeq \Sigma X_s$ for each $s\ge 1$, 
then the set $\Omega=\{\omega_s\,|\,1\le s\le n\}$ is called a {\it quasi-structure} on $\mathscr{S}$. 
We set $\omega_0=1_{C_1}:C_1\cup_{j_0}CC_0=C_1\to C_1$. 
\item[\rm(3)] $\mathscr{S}$ is an {\it iterated mapping cone} of depth $n$ if $j_s$ is 
a homotopy cofibre of $g_s$ for every $s\ge 1$. 
In this case a homotopy equivalence $a_s:C_{s+1}\longrightarrow C_s\cup_{g_s}CX_s$ 
and its homotopy inverse $a_s^{-1}$ can be taken such that 
\begin{equation}
a_s\circ j_s=i_{g_s},\ a_s^{-1}\circ i_{g_s}=j_s,\ a_s^{-1}\circ a_s\overset{C_s}{\simeq}1_{C_{s+1}},\ 
a_s\circ a_s^{-1}\overset{C_s}{\simeq}1_{C_s\cup_{g_s} CX_s}.
\end{equation}
If we choose such a homotopy equivalence $a_s$ for each $s\ge 1$, then 
we call the set $\mathscr{A}=\{a_s\,|\,1\le s\le n\}$ a {\it structure} on $\mathscr{S}$, 
and we set $\omega_s=q_{g_s}'\circ (a_s\cup C1_{C_s})$ and $\Omega(\mathscr{A})=\{\omega_s\,|\,1\le s\le n\}$ 
which is a quasi-structure on $\mathscr{S}$. 
\item[\rm(4)] $\mathscr{S}$ is {\it reduced} if $C_2=C_1\cup_{g_1}CX_1$ and $j_1=i_{g_1}$. 
A quasi-structure $\Omega$ on a reduced quasi iterated mapping cone is {\it reduced} if $\omega_1=q_{g_1}'$. 
A structure $\mathscr{A}$ on a reduced iterated mapping cone is {\it reduced} if $a_1=1_{C_2}$. 
\item[\rm(5)] Given a map $f:C_1\to Y$, we denote by $\overline{f}^s:C_s\to Y$ 
an extension of $f$ to $C_s$, that is, 
$f=\begin{cases} \overline{f}^1 & s=1\\ \overline{f}^s\circ j_{s-1}\circ\cdots\circ j_1 & s\ge 2\end{cases}$. 
We set $\overline{f}^0=*:C_0\to Y$.
\end{enumerate}
\end{defi}

\begin{conv}
When $\mathscr{S}$ is an iterated mapping cone of depth $n$ with a structure $\{a_s\,|\,1\le s\le n\}$, 
we denote by $a_s^{-1}$ a homotopy inverse of $a_s$ such that it satisfies (5.2).
\end{conv}

Note that an iterated mapping cone is a quasi iterated mapping cone. 
When $\mathscr{S}$ is a reduced iterated mapping cone, a structure $\mathscr{A}$ on $\mathscr{S}$ 
is reduced if and only if $\Omega(\mathscr{A})$ is reduced. 
Notice also that a quasi iterated mapping cone is a revised version of the one called a shaft by Gershenson 
in \cite[Definition~1.2D]{G} where he did not suppose that the cofibrations $j_i$ are free. 

Let $\mathscr{S}=(X_1,\dots,X_n;C_1,\dots,C_{n+1};g_1,\dots,g_n;j_1,\dots,j_n)$ 
be a quasi iterated mapping cone of depth $n$
with a quasi-structure $\Omega=\{\omega_s\,|\,1\le s\le n\}$ and $f:C_1\to Y$ 
a map with an extension $\overline{f}^{n+1}$ to $C_{n+1}$. 
We define maps for $0\le s\le n$ as follows:
\begin{equation}
\left\{\begin{array}{@{\hspace{0.2mm}}l}
\overline{f}^s=\overline{f}^{n+1}\circ j_n\circ\cdots\circ j_s:C_s\to Y,\\
h_{s+1}=\overline{f}^{s+1}\cup C1_{C_s} : C_{s+1}\cup_{j_s} CC_s\to Y\cup_{\overline{f}^s} CC_s,\\
k_{s+1} =1_Y\cup Cj_s : Y\cup_{\overline{f}^s} CC_s\to Y\cup_{\overline{f}^{s+1}} CC_{s+1},\\
\widetilde{g}_{s+1}=
\begin{cases} f:C_1\to Y & s=0\\
 h_{s+1}\circ\omega_s^{-1} : 
\Sigma X_s \to Y\cup_{\overline{f}^s}CC_s & s\ge 1\end{cases},\\
\xi_{s+1}:(Y\cup_{\overline{f}^{s+1}}CC_{s+1})\cup_{k_{s+1}}C(Y\cup_{\overline{f}^s}CC_s)\\
\hspace{2cm}\to (Y\cup_{1_Y}CY)\cup_{\overline{f}^{s+1}\cup C\overline{f}^s}C(C_{s+1}\cup_{j_s}CC_s),\\
\hspace{0.5cm} y\mapsto y,\ c_{s+1}\wedge t\mapsto c_{s+1}\wedge t,\ y\wedge t\mapsto y\wedge t,\ c_s\wedge u\wedge t
\mapsto c_s\wedge t\wedge u,\\
\widetilde{\omega_s}=\Sigma\omega_s\circ q_{\overline{f}^{s+1}\cup C\overline{f}^s}\circ \xi_{s+1}
 :C_{k_{s+1}}\to \begin{cases} \Sigma C_1 & s=0\\ \Sigma\Sigma X_s & s\ge 1\end{cases},
\end{array}\right.
\end{equation}
where $y\in Y,\ c_{s+1}\in C_{s+1},\ c_s\in C_s,\ t,u\in I$, and $\omega_s^{-1}$ is a homotopy inverse of $\omega_s$. 
Since $\omega_s^{-1}$ is determined by $\omega_s$ up to homotopy, so is $\widetilde{g}_{s+1}$ for $s\ge 1$. 

\begin{lemma}
Under the above situation, we have $C_{\overline{f}^0}=Y$, $\overline{f}^1=h_1=\widetilde{g}_1=f$, $k_1=i_{f}$, 
$\widetilde{\omega_0}=q_f'$, $\xi_{s+1}$ is a homeomorphism, $\widetilde{\omega_s}$ is a homotopy equivalence, and 
the following diagram (5.4) is a reduced iterated mapping cone of depth $n+1$ with a reduced quasi-structure 
$\widetilde{\Omega}=\{\widetilde{\omega_s}\,|\,0\le s\le n\}$.
\begin{equation}
\begin{CD}
C_1 @. \Sigma X_1 @. \Sigma X_2 @.   \cdots @. \Sigma X_n @. \ \\
@V \widetilde{g}_1VV @V \widetilde{g}_2VV @V \widetilde{g}_3VV @. @V \widetilde{g}_{n+1}VV @.\\
C_{\overline{f}^0} @>k_1>> C_{\overline{f}^1} @>k_2>> C_{\overline{f}^2} @>k_3>> \cdots @>k_n>> 
C_{\overline{f}^n} @ >k_{n+1}>> C_{\overline{f}^{n+1}}
\end{CD}
\end{equation}
\end{lemma}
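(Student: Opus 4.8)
The statement of Lemma~5.3 bundles together several routine verifications and one genuine point (that $k_s$ is a homotopy cofibre of $\widetilde g_s$). I would organize the proof as follows.

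\textbf{Step 1: The elementary identities.} The equalities $C_{\overline f^0}=Y$, $\overline f^1=h_1=\widetilde g_1=f$, $k_1=i_f$, and $\widetilde\omega_0=q_f'$ are immediate from the definitions in (5.3): for $s=0$ one has $CC_0=C\{*\}=\{*\}$, so $Y\cup_{\overline f^0}CC_0=Y$, $h_1=\overline f^1\cup C1_{C_0}=\overline f^1=f$, $k_1=1_Y\cup Cj_0=i_f$, and $\widetilde g_1=f$ by the first case of the defining formula; and $\widetilde\omega_0=\Sigma\omega_0\circ q_{\overline f^1\cup C\overline f^0}\circ\xi_1$, which unwinds to $q_f'$ since $\omega_0=1_{C_1}$, $\overline f^0=*$, and $\xi_1$ is the canonical identification. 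That $\xi_{s+1}$ is a homeomorphism is the standard ``swap the two cone coordinates'' homeomorphism $(C_{s+1}\cup_{j_s}CC_s)\cup C C_s$-type rearrangement, and is checked exactly as the homeomorphisms $\psi_f^\ell$ and the map $\psi$ in Lemma~3.1; I would simply exhibit the inverse.

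\textbf{Step 2: $k_{s+1}$ is a free cofibration and $C_{k_{s+1}}\simeq\Sigma\Sigma X_s$ via $\widetilde\omega_s$.} Since $j_s:C_s\to C_{s+1}$ is a free cofibration by hypothesis on $\mathscr S$, Proposition~2.2 applied to $j_s$ and $\overline f^{s+1}:C_{s+1}\to Y$ gives that $k_{s+1}=1_Y\cup Cj_s:Y\cup_{\overline f^s}CC_s\to Y\cup_{\overline f^{s+1}}CC_{s+1}$ is a free cofibration. For the mapping cone: the map $h_{s+1}=\overline f^{s+1}\cup C1_{C_s}$ fits Lemma~3.1 with $(Y,Y',Z)=(C_s,C_{s+1},Y)$ (and $j=j_s$, $g'=\overline f^s$), so $i_{h_{s+1}}\cup C i_{j_s}$-type maps are homotopy equivalences; composing the relevant homotopy equivalences with $\Sigma\omega_s$ (a homotopy equivalence by the quasi-structure hypothesis, using that $C_{s+1}\cup_{j_s}CC_s\simeq\Sigma X_s$ applied after one more suspension) and with the Puppe equivalence $q'_{\overline f^{s+1}\cup C\overline f^s}$ shows $\widetilde\omega_s$ is a homotopy equivalence onto $\Sigma\Sigma X_s$ (resp. $\Sigma C_1$ for $s=0$).

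\textbf{Step 3: (5.4) is a reduced iterated mapping cone with reduced quasi-structure $\widetilde\Omega$.} Reducedness is Step~1 ($C_{\overline f^1}=C_{\overline f^0}\cup_{\widetilde g_1}CC_1$ with $k_1=i_{\widetilde g_1}$, and $\widetilde\omega_0=q'_{\widetilde g_1}$). It remains to check that each $k_{s+1}$ ($s\ge1$) is a \emph{homotopy cofibre} of $\widetilde g_{s+1}=h_{s+1}\circ\omega_s^{-1}$, i.e. that $C_s\xrightarrow{\widetilde g_{s+1}}C_{\overline f^s}\xrightarrow{k_{s+1}}C_{\overline f^{s+1}}$ is a cofibre sequence in the sense of Lemma~4.2. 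Here I would invoke Lemma~4.2: exhibit the homotopy-commutative ladder whose top row is $\Sigma X_s\xrightarrow{\widetilde g_{s+1}}Y\cup_{\overline f^s}CC_s\xrightarrow{k_{s+1}}Y\cup_{\overline f^{s+1}}CC_{s+1}$ and whose bottom row is the standard Puppe cofibre sequence $C_{s+1}\cup_{j_s}CC_s\to Y\cup_{\overline f^s}CC_s\to i_{(\cdot)}$, with vertical maps $\omega_s$ (a homotopy equivalence), $1$, and a homotopy equivalence supplied by Lemma~3.1 applied to $j_s$ and $\overline f^s$. Since $k_{s+1}$ is already known to be a free cofibration (Step~2), Lemma~4.2 then yields that $k_{s+1}$ is a homotopy cofibre of $\widetilde g_{s+1}$, and tracking the equivalences identifies the induced quasi-structure element with $\widetilde\omega_s$ as defined.

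\textbf{Main obstacle.} The routine parts are genuinely routine; the one place demanding care is Step~3 --- matching, up to homotopy, the ``intrinsic'' Puppe cofibre sequence of $k_{s+1}$ (equivalently, of the pair coming from $j_s$ via Lemma~3.1) with the extrinsic description through $\omega_s$, so that the comparison equivalence is compatible with $i_{k_{s+1}}$ and pins down $\widetilde\omega_s$ on the nose. This is exactly the kind of diagram-chase that Proposition~3.3(1)(a),(b) and Lemma~3.1 are designed to feed; the work is in assembling them in the right order and keeping track of which squares commute strictly versus up to homotopy. No single step should require a long computation, but the bookkeeping of homotopy equivalences across the ladder is where an error would most easily creep in.
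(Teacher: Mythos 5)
Your proposal is correct and follows essentially the same path as the paper's proof: $k_{s+1}$ is a free cofibration by Proposition~2.2, and $k_{s+1}$ is a homotopy cofibre of $\widetilde g_{s+1}$ by combining the homotopy equivalence of Lemma~3.1 with a $\Phi(\,\cdot\,)$ from Proposition~3.3; routing this through Lemma~4.2 is only a cosmetic repackaging, since the ``if'' direction of Lemma~4.2 is itself proved by exactly that $\Phi$-argument. One notational slip worth fixing: in your application of Lemma~3.1 with $(Y,Y',Z)=(C_s,C_{s+1},Y)$ the map $g'\colon Y'\to Z$ must be $\overline f^{s+1}$ (so that $g'\circ j=\overline f^s$), not $\overline f^s$, and the resulting equivalence is $i_{\overline f^s}\cup Ci_{j_s}$ rather than ``$i_{h_{s+1}}\cup Ci_{j_s}$''; similarly $\widetilde\omega_s$ involves $q_{\overline f^{s+1}\cup C\overline f^s}$ (a homotopy equivalence because the mapping cone is formed over the contractible $CY$), not the Puppe $q'$.
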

\begin{proof}
Since $k_1=i_{\widetilde{g}_1}$, $k_1$ is a free cofibration and a homotopy cofibre of $\widetilde{g}_1$, 
and $\widetilde{\omega_0}=q_{\widetilde{g}_1}'$. 
Let $1\le s\le n$. 
By Proposition~2.2, $k_{s+1}$ is a free cofibration. 
Take $J: h_{s+1}\simeq \widetilde{g}_{s+1}\circ \omega_s$ and set 
$\Phi(J,s+1)=\Phi(h_{s+1}, \widetilde{g}_{s+1}, \omega_s, 1_{C_{\overline{f}^s}};J)$. 
Then we have the following diagram. 
$$
\xymatrix{
C_s\ar[r]^-{j_s} \ar@{=}[d] & C_{s+1} \ar[r]^-{i_{j_s}} \ar[d]_-{\overline{f}^{s+1}} & C_{s+1}\cup_{j_s}CC_s 
\ar[d]_-{h_{s+1}} \ar[r]^-{\omega_s}_-\simeq 
& \Sigma X_s \ar[d]_-{\widetilde{g}_{s+1}} \\
C_s \ar[r]^-{\overline{f}^s} & Y \ar[r]^-{i_{\overline{f}^s}} \ar[d]_-{i_{\overline{f}^{s+1}}} 
& Y\cup_{\overline{f}^s}CC_s \ar[dl]_-{k_{s+1}} \ar[d]^-{i_{h_{s+1}}} 
\ar@{=}[r] & C_{\overline{f}^s} \ar[d]_-{i_{\widetilde{g}_{s+1}}} \\
& C_{\overline{f}^{s+1}}  \ar[r]_-{i_{\overline{f}^s}\cup Ci_{j_s}}^-\simeq 
& C_{h_{s+1}}  
 \ar[r]^-\simeq_-{\Phi(J,s+1)} & C_{\widetilde{g}_{s+1}} 
}
$$
By Proposition 3.3(1), $\Phi(J,s+1)$ is a homotopy equivalence and $\Phi(J,s+1)\circ i_{h_{s+1}}=i_{\widetilde{g}_{s+1}}$. 
By Lemma~3.1, $i_{\overline{f}^s}\cup Ci_{j_s}$ is a homotopy equivalence and 
$(i_{\overline{f}^s}\cup Ci_{j_s})\circ k_{s+1}\simeq i_{h_{s+1}}$. 
Hence $\Phi(J,s+1)\circ(i_{\overline{f}^s}\cup Ci_{j_s})\circ k_{s+1}\simeq i_{\widetilde{g}_{s+1}}$. 
Thus $k_{s+1}$ is a homotopy cofibre of $\widetilde{g}_{s+1}$. 
Hence (5.4) is a reduced iterated mapping cone of depth $n+1$. 
As is easily seen, $\xi_{s+1}$ is a homeomorphism, and 
$q_{\overline{f}^{s+1}\cup C\overline{f}^s}:C_{\overline{f}^{s+1}\cup C\overline{f}^s}\to \Sigma(C_{s+1}\cup_{j_s}CC_s)$ 
and $\Sigma\omega_s$ are homotopy equivalences. 
Hence $\widetilde{\omega_s}$ is a homotopy equivalence. 
Therefore $\widetilde{\Omega}$ is a reduced quasi-structure on (5.4). 
\end{proof}

\begin{defi}
We denote the iterated mapping cone (5.4) by $\mathscr{S}(\overline{f}^{n+1},\Omega)$, that is,
\begin{align*}
\mathscr{S}(\overline{f}^{n+1},\Omega)&=
\big(C_1,\Sigma X_1,\dots,\Sigma X_n;Y,C_{\overline{f}^1},\dots,C_{\overline{f}^{n+1}}; \\
&\hspace{5cm}f,\widetilde{g}_2,\dots,\widetilde{g}_{n+1};k_1,\dots,k_{n+1}\big),
\end{align*}
and call it the {\it iterated mapping cone induced from} $\mathscr{S}$ 
{\it by} $\overline{f}^{n+1}$ {\it and} $\Omega$, 
and we call $\widetilde{\Omega}=\{\widetilde{\omega_s}\,|\,0\le s\le n\}$ the 
{\it typical quasi-structure} on $\mathscr{S}(\overline{f}^{n+1},\Omega)$. 
When $\mathscr{S}$ is an iterated mapping cone with a structure $\mathscr{A}$, 
we denote the reduced iterated mapping cone $\mathscr{S}(\overline{f}^{n+1},\Omega(\mathscr{A}))$ 
by $\mathscr{S}(\overline{f}^{n+1},\mathscr{A})$ and call it the {\it iterated mapping cone induced from} 
$\mathscr{S}$ {\it by} $\overline{f}^{n+1}$ {\it and} $\mathscr{A}$. 
(Notice that we do not have typical structure on $\mathscr{S}(\overline{f}^{n+1},\Omega)$ 
even if $\mathscr{S}$ is an iterated mapping cone.)
\end{defi}

\begin{rem}
We have easily the following from definitions. 
\begin{enumerate}
\item[\rm(1)] The iterated mapping cone $\mathscr{S}(\overline{f}^{n+1},\Omega)$ depends 
on the map $\overline{f}^{n+1}$ and spaces $X_1,\dots,X_n$ but not on maps $g_1$, $\dots$, $g_n$.
\item[\rm(2)] The edge of $\mathscr{S}(\overline{f}^{n+1},\Omega)$ does not depend on $\Omega$. 
\item[\rm(3)] If two quasi iterated mapping cones $\mathscr{S}, \mathscr{S}'$ of depth $n$ have the same edge 
$X_1\overset{g_1}{\rightarrow}C_1\overset{j_1}{\rightarrow}C_2\overset{j_2}{\rightarrow}\cdots\overset{j_n}{\rightarrow}C_{n+1}$ 
and if a map $f:C_1\to Y$ has an extension $\overline{f}^{n+1}$ to $C_{n+1}$, then two iterated mapping cones 
$\mathscr{S}(\overline{f}^{n+1},\Omega), \mathscr{S}'(\overline{f}^{n+1},\Omega')$ have the same edge for any quasi-structures 
$\Omega, \Omega'$ on $\mathscr{S},\mathscr{S}'$, respectively.
\end{enumerate}
\end{rem}

If the following problem is solved affirmatively, the number of systems which shall be defined 
in the next section decreases by 2 to 10. 

\begin{prob}
Is every quasi iterated mapping cone of depth $1$ an iterated mapping cone?
\end{prob}

\section{Unstable higher Toda brackets} 

In this section we will work in $\mathrm{TOP}^w$. 
(As indicated in the previous section we can develop our consideration of this section similarly in $\mathrm{TOP}^{clw}$.) 

Sometimes, without particular comments, we do not distinguish in notation between a map and its homotopy class. 

Throughout the section 6, we denote by $\vec{\bm \alpha}=(\alpha_n,\dots,\alpha_1)$ a sequence of homotopy classes  
\begin{equation}
\alpha_i\in[X_i,X_{i+1}]\quad (i=1,2,\dots,n;\ n\ge 3).
\end{equation}
If a map $f_i:X_i\to X_{i+1}$ represents $\alpha_i$, then the sequence $\vec{\bm f}=(f_n,\dots,f_1)$ is called 
a {\it representative} of $\vec{\bm \alpha}$. 
We denote by $\mathrm{Rep}(\vec{\bm \alpha})$ the set of representatives of $\vec{\bm \alpha}$. 

\numberwithin{equation}{subsection}

\subsection{Definition of higher Toda brackets}

Given $\vec{\bm f}\in\mathrm{Rep}(\vec{\bm \alpha})$, 
we consider collections $\{\mathscr{S}_r,\overline{f_r},\Omega_r\,|\,2\le r\le n\}$, 
$\{\mathscr{S}_2,\overline{f_2},\Omega_2\}\cup\{\mathscr{S}_r,\overline{f_r},\mathscr{A}_r\,|\,3\le r\le n\}$, 
and 
$\{\mathscr{S}_r,\overline{f_r},\mathscr{A}_r\,|\,2\le r\le n\}$ (provided $\mathscr{S}_2$ is an iterated mapping cone) 
which satisfy the following (i), (ii), and (iii). 
(There is a possibility that such collections do not exist for suitable $\vec{\bm f}$.) 
\begin{enumerate}
\item[(i)] $\mathscr{S}_2$ is a quasi iterated mapping cone of depth $1$ 
as displayed in 
$$
\xymatrix{
X_1 \ar[d]_-{f_1} &\\
X_2 \ar[r]^-{j_{2,1}} & C_{2,2} 
}
$$
with $\Omega_2$ a quasi-structure and $\mathscr{A}_2$ a structure provided $\mathscr{S}_2$ is an iterated mapping cone. 
\item[(ii)] 
$\mathscr{S}_r$ is an iterated mapping cone of depth $r-1$ for $3\le r\le n$ as displayed in
$$
\xymatrix{
X_{r-1} \ar[d]_-{f_{r-1}} & \Sigma X_{r-2} \ar[d]_-{g_{r,2}} & \Sigma^2X_{r-3} \ar[d]_-{g_{r,3}} &\cdots & \Sigma^{r-2}X_1 \ar[d]_-{g_{r,r-1}}&\\
X_r  \ar[r]^-{j_{r,1}} & C_{r,2}\ar[r]^-{j_{r,2}} & C_{r,3}\ar[r]^-{j_{r,3}} &\cdots\ar[r]^-{j_{r,r-2}} & C_{r,r-1}\ar[r]^-{j_{r,r-1}} & C_{r,r}   
}
$$
with $\Omega_r$ a quasi-structure and $\mathscr{A}_r$ a structure. 
\item[(iii)] $\overline{f_r}:C_{r,r}\to X_{r+1}$ is an extension of $f_r$ to $C_{r,r}$ for $2\le r\le n-1$, 
and $\overline{f_n}:C_{n,n-1}\to X_{n+1}$ is an extension of $f_n$ to $C_{n,n-1}$.
\end{enumerate}
We use the following notations: 
\begin{itemize}
\item $C_{r,0}=\{*\}$, $C_{r,1}=X_r$, $j_{r,0}=*:C_{r,0}\to C_{r,1}$, $f_r^0=f_r\circ j_{r,0}:C_{r,0}\to X_{r+1}$ 
for $1\le r\le n$, and $\overline{f_1}=f_1:C_{1,1}\to X_2$;
\item $g_{r,1}=f_{r-1}$ for $2\le r\le n$; 
\item $\overline{f_r}^s=\begin{cases} f_r & 1=s\le r\le n\\\overline{f_r}\circ j_{r,r-1}\circ\cdots\circ j_{r,s} 
& 0\le s<r\le n-1\\      \overline{f_r} & 1\le s=r\le n-1\end{cases}\ :C_{r,s}\to X_{r+1} $; 
\item $\overline{f_n}^s=\begin{cases} {\overline{f_n}}\circ j_{n,n-2}\circ\cdots\circ j_{n,s} & 0\le s\le n-2\\
\overline{f_n} & s=n-1\end{cases}\  :C_{n,s}\to X_{n+1}$; 
\item $\Omega_r=\{\omega_{r,s}\,|\,1\le s<r\}$ and 
$\omega_{r,0}=1_{X_r}$ for $2\le r\le n$, where $\omega_{r,s}:C_{r,s+1}\cup_{j_{r,s}}CC_{r,s}\to\Sigma\Sigma^{s-1}X_{r-s}$;
\item $\mathscr{A}_r=\{a_{r,s}\,|\,1\le s<r\}$ and $\Omega(\mathscr{A}_r)=\{\omega_{r,s}\,|\,1\le s<r\}$, where 
$a_{r,s}:C_{r,s+1}\to C_{r,s}\cup_{g_{r,s}}C\Sigma^{s-1}X_{r-s}$ and
$$
\omega_{r,s}=q_{g_{r,s}}'\circ (a_{r,s}\cup C1_{C_{r,s}}):C_{r,s+1}\cup_{j_{r,s}}CC_{r,s}\simeq \Sigma\Sigma^{s-1}X_{r-s},
$$
and $a_{r,s}^{-1}$ is a homotopy inverse of $a_{r,s}$ such that 
$$
a_{r,s}^{-1}\circ i_{g_{r,s}}=j_{r,s},\quad a_{r,s}^{-1}\circ a_{r,s}\overset{C_{r,s}}{\simeq}1_{C_{r,s+1}},\quad 
a_{r,s}\circ a_{r,s}^{-1}\overset{C_{r,s}}{\simeq}1_{C_{g_{r,s}}}.
$$ 
\end{itemize} 

\begin{defi'}
Various presentations of $\vec{\bm f}$ and related notions are defined as follows 
(if Problem 5.6 is affirmative, (a) (resp.\,(d)) equals with (a$'$) (resp.\,(d$'$)). 
\begin{enumerate}
\item[(1)] A collection $\{\mathscr{S}_r,\,\overline{f_r},\,\Omega_r\,|\,2\le r\le n\}$ is
\begin{enumerate}
\item[(a)] a {\it $q$-presentation} if $\mathscr{S}_{r+1}=\mathscr{S}_r(\overline{f_r},\Omega_r)$ 
for $2\le r<n$;
\item[(a$'$)] a {\it $qs_2$-presentation} if $\mathscr{S}_2$ is an iterated mapping cone and 
$\mathscr{S}_{r+1}=\mathscr{S}_r(\overline{f_r},\Omega_r)$ for $2\le r<n$;
\item[(b)] a {\it $q\dot{s}_2$-presentation} if it is a $qs_2$-presentation and $\mathscr{S}_2$ is reduced;
\item[(c)] a {\it $q\ddot{s}_2$-presentation} if it is a $q\dot{s}_2$-presentation and $\Omega_2$ is reduced;
\item[(d)] an {\it $aq$-presentation} if $\mathscr{S}_{r+1}=\mathscr{S}_r(\overline{f_r},\Omega_r)$ and 
$\Omega_{r+1}=\widetilde{\Omega_r}$ for $2\le r< n$;
\item[(d$'$)] an {\it $aqs_2$-presentation} if $\mathscr{S}_2$ is an iterated mapping cone and 
$\mathscr{S}_{r+1}=\mathscr{S}_r(\overline{f_r},\Omega_r)$ and $\Omega_{r+1}=\widetilde{\Omega_r}$ for $2\le r< n$;
\item[(e)] an {\it $aq\dot{s}_2$-presentation} if it is an $aqs_2$-presentation and $\mathscr{S}_2$ is reduced; 
\item[(f)] an {\it $aq\ddot{s}_2$-presentation} if it is an $aq\dot{s}_2$-presentation and $\Omega_2$ is reduced.
\end{enumerate}
\item[(2)] A collection 
$\{\mathscr{S}_2,\overline{f_2},\Omega_2\}\cup\{\mathscr{S}_r,\overline{f_r},\mathscr{A}_r\,|\,3\le r\le n\}$ is a 
{\it $q_2$-presentation} if $\mathscr{S}_3=\mathscr{S}_2(\overline{f_2},\Omega_2)$, 
$\mathscr{S}_{r+1}=\mathscr{S}_r(\overline{f_r},\mathscr{A}_r)\ (3\le r<n)$, and $\mathscr{A}_{r}$ is reduced for 
$3\le r\le n$. 
\item[(3)] A collection $\{\mathscr{S}_r,\,\overline{f_r},\,\mathscr{A}_r\,|\,2\le r\le n\}$ is
\begin{enumerate}
\item[(g)] an {\it $s_t$-presentation} if  
$\mathscr{S}_{r+1}=\mathscr{S}_r(\overline{f_r},\mathscr{A}_r)$ and $\mathscr{A}_{r+1}$ is reduced for $2\le r< n$; 
\item[(h)] an {\it $\dot{s}_t$-presentation} if it is an $s_t$-presentation and $\mathscr{S}_2$ is reduced; 
\item[(i)] an {\it $\ddot{s}_t$-presentation} if it is an $\dot{s}_t$-presentation and $\mathscr{A}_2$ is reduced. 
\end{enumerate}
\item[(4)] Let $\star$ denote one of the following: 
$q$, $aq$, $qs_2$, $q\dot{s}_2$, $q\ddot{s}_2$, $aqs_2$, $aq\dot{s}_2$, $aq\ddot{s}_2$, 
$s_t$, $\dot{s}_t$, $\ddot{s}_t$, and $q_2$. 
$\vec{\bm f}$ is {\it $\star$-presentable} if it has a $\star$-presentation, and 
$\vec{\bm \alpha}$ is {\it $\star$-presentable} if it has a $\star$-presentable representative. 
\end{enumerate}
\end{defi'}

In the above definitions we used the following abbreviations: $q$=``quasi-structure''; 
$s_2$=``$\mathscr{S}_2$ is an iterated mapping cone''; $\dot{s}_2$=``$\mathscr{S}_2$ is a reduced iterated mapping cone''; 
$\ddot{s}_2$=``$\mathscr{S}_2$ is a reduced iterated mapping cone with $\Omega_2$ reduced''; 
$a$=``asymptotic''; $s_t$=``structure''; $\dot{s}_t$=``$s_t$ and $\dot{s}_2$''; $\ddot{s}_t$=``$\dot{s}_t$ and $\mathscr{A}_2$ is reduced''. 

\begin{defi'}
We denote the set of homotopy classes of $\overline{f_n}\circ g_{n,n-1}$ 
for all $\star$-presentations of $\vec{\bm f}$ by $\{\vec{\bm f}\,\}^{(\star)}$ or $\{f_n,\dots,f_1\}^{(\star)}$ 
which is called the $\star$-{\it bracket} of $\vec{\bm f}$. 
It is a subset of $[\Sigma^{n-2}X_1,X_{n+1}]$ and there is a possibility that it is the empty set. 
For convenience we denote by $\{f_2,f_1\}^{(\star)}$ the one point set consisting of the homotopy class of $f_2\circ f_1$. 
\end{defi'}

Notice that $\vec{\bm f}$ is $\star$-presentable if and only if 
$\{\vec{\bm f}\,\}^{(\star)}$ is not empty. 
As shall be seen in \S 6.4, we can denote $\{\vec{\bm f}\,\}^{(\star)}$ by $\{\vec{\bm \alpha}\,\}^{(\star)}$ 
for any $\vec{\bm f}\in \mathrm{Rep}(\vec{\bm \alpha})$. 

\begin{rem'}
It follows from definitions that if $\vec{\bm \alpha}$ is $q$-presentable, then $\alpha_{r+1}\circ\alpha_r=0$ 
for $1\le r\le n-1$, and that we have 
the commutative diagram 
\begin{equation}
\begin{split}
\xymatrix{
\{\vec{\bm f}\,\}^{(q\ddot{s}_2)} \ar[r]^-{!!} &\{\vec{\bm f}\,\}^{(q\dot{s}_2)} \ar[drr]^-{!}& &
\{\vec{\bm f}\,\}^{(\ddot{s}_t)} \ar[r]^-{!} & \{\vec{\bm f}\,\}^{(\dot{s}_t)} \ar[d]^-{!}\\
\{\vec{\bm f}\,\}^{(aq\ddot{s}_2)} \ar[r]  \ar[u] & \{\vec{\bm f}\,\}^{(aq\dot{s}_2)}  \ar[r]^-{!} \ar[u] & 
\{\vec{\bm f}\,\}^{(aqs_2)} \ar[d]^-{\#} \ar[r] & \{\vec{\bm f}\,\}^{(qs_2)} \ar[d]^-{\#} & \{\vec{\bm f}\,\}^{(s_t)} \ar[l] \ar[d]\\
& &   \{\vec{\bm f}\,\}^{(aq)} \ar[r] & \{\vec{\bm f}\,\}^{(q)} & \{\vec{\bm f}\,\}^{(q_2)} \ar[l]
}
\end{split}
\end{equation}
where arrows are inclusions, $!!$ is $=$ for $n\ge 4$, and 
four $!$'s are $=$ as shall be shown in Theorem 6.2.1. 
Notice that if Problem 5.6 is affirmative, two $\#$'s are $=$. 
\end{rem'}

The following two propositions are easy consequences of definitions.  

\begin{prop'}
Let $\{\mathscr{S}_r,\overline{f_r},\Omega_r\,|\,2\le r\le n\}$ be a $q$-presentation of $\vec{\bm f}$. 
Then 
\begin{align*}
C_{r,2}&=X_r\cup_{\overline{f_{r-1}}^1}CX_{r-1}\ (3\le r\le n),\quad C_{3,3}=X_3\cup_{\overline{f_2}}CC_{2,2},\\ 
C_{r,s}&=X_r\cup_{\overline{f_{r-1}}^{s-1}}C(X_{r-1}\cup_{\overline{f_{r-2}}^{s-2}}C(X_{r-2}\cup\cdots\\
&\hspace{1cm} \cup_{\overline{f_{r-s+2}}^2}C(X_{r-s+2}\cup_{\overline{f_{r-s+1}}^1}CX_{r-s+1})\cdots))\ (3\le s<r\le n),\\
C_{r,r}&=X_r\cup_{\overline{f_{r-1}}^{r-1}}C(X_{r-1}\cup_{\overline{f_{r-2}}^{r-2}} C( \cdots \cup_{\overline{f_3}^3}
C(X_3\cup_{\overline{f_2}^2}C_{2,2})\cdots ))\\
&\hspace{9cm} (4\le r\le n).
\end{align*}
\end{prop'}

\begin{prop'}
If $\{\vec{\bm f}\,\}^{(\star)}$ is not empty, then $\{f_m,f_{m-1},\dots,f_\ell\}^{(\star)}$ contains $0$ for $1\le\ell<m\le n,\ 
(\ell,m)\ne(1,n)$. 
\end{prop'}

\begin{defi'}
If there exist null-homotopies $A_i:f_{i+1}\circ f_i\simeq *\ (1\le i\le n-1)$ such that 
$[f_{i+2},A_{i+1},f_{i+1}]\circ(f_{i+1},A_i,f_i)\simeq *\ (1\le i\le n-2)$, then we call 
$\vec{\bm f}$ and $(\vec{\bm f};\vec{\bm A})$ {\it admissible}, where $\vec{\bm A}=(A_{n-1},\dots,A_1)$. 
We call $\vec{\bm \alpha}$ {\it admissible} if it has an admissible representative. 
\end{defi'}

It follows from Proposition 2.11 of \cite{OO1} that if $\vec{\bm \alpha}$ is admissible, 
then every representative of it is admissible. 
From results in forthcoming sub-sections, we can prove the following without difficulties: 
when $n=3$, $\vec{\bm \alpha}$ is admissible if and only if 
$\{\vec{\bm \alpha}\,\}^{(\star)}$ contains $0$ for all $\star$; 
when $n=4$, $\vec{\bm \alpha}$ is admissible if and only if $\vec{\bm \alpha}$ 
is $\star$-presentable for all $\star$; 
when $n\ge 5$, if $\vec{\bm \alpha}$ is $\star$-presentable for some $\star$, then $\vec{\bm \alpha}$ is admissible.  

\begin{rem'} The following is obvious by definitions: 
if $f_i:X_i\to X_{i+1}$ is a map in $\mathrm{TOP}^{clw}$ for every $i$, 
then the $\star$-brackets of $\vec{\bm f}$ in $\mathrm{TOP}^{clw}$ 
and $\mathrm{TOP}^w$ are the same for 
$\star=q\dot{s}_2, q\ddot{s}_2, aq\dot{s}_2, aq\ddot{s}_2, \dot{s}_t, \ddot{s}_t$. 
\end{rem'}

\subsection{Relations between twelve brackets}

In this subsection we prove three results and state an example. 
From 6.2.1, 6.2.2, and (6.1.1), we have (1.1)--(1.4). 

\begin{thm'}
\begin{enumerate}
\item[\rm(1)] $\{\vec{\bm f}\,\}^{(\ddot{s}_t)}=\{\vec{\bm f}\,\}^{(\dot{s}_t)}
=\{\vec{\bm f}\,\}^{(s_t)}$. 
\item[\rm(2)] $\{\vec{\bm f}\,\}^{(aqs_2)}=\{\vec{\bm f}\,\}^{(aq\dot{s}_2)}
=\{\vec{\bm f}\,\}^{(aq\ddot{s}_2)}\circ \Sigma^{n-3}\mathscr{E}(\Sigma X_1)$, 
$\{\vec{\bm f}\,\}^{(qs_2)}=\{\vec{\bm f}\,\}^{(q\dot{s}_2)}
=\{\vec{\bm f}\,\}^{(aq\ddot{s}_2)}\circ\mathscr{E}(\Sigma^{n-2}X_1)$, and 
$$
\{\vec{\bm f}\,\}^{(q\ddot{s}_2)}=\begin{cases} \{\vec{\bm f}\,\}^{(aq\ddot{s}_2)} & n=3\\
\{\vec{\bm f}\,\}^{(aq\ddot{s}_2)}\circ\mathscr{E}(\Sigma^{n-2}X_1) & n\ge 4\end{cases}.
$$ 
\item[\rm(3)] $\{\vec{\bm f}\,\}^{(q)}=\{\vec{\bm f}\,\}^{(aq)}\circ\mathscr{E}(\Sigma^{n-2}X_1)$. 
\item[\rm(4)] $\{\vec{\bm f}\,\}^{(\ddot{s}_t)}\circ\mathscr{E}(\Sigma^{n-2}X_1)
=\{\vec{\bm f}\,\}^{(aq\ddot{s}_2)}\circ\mathscr{E}(\Sigma^{n-2}X_1)$. 
\item[\rm(5)] If $\alpha\in\{\vec{\bm f}\,\}^{(q)}$, then there are $\theta,\theta'\in[\Sigma^{n-2}X_1, \Sigma^{n-2}X_1]$ 
such that $\alpha\circ \theta\in\{\vec{\bm f}\,\}^{(aq\ddot{s}_2)}$ and 
$\alpha\circ \theta'\in\{\vec{\bm f}\,\}^{(\ddot{s}_t)}$. 
\end{enumerate}
\end{thm'}

\begin{cor'}
\begin{enumerate}
\item[\rm(1)] $\{\vec{\bm f}\,\}^{(q)}\circ\varepsilon=\{\vec{\bm f}\,\}^{(q)}$ and 
$\{\vec{\bm f}\,\}^{(qs_2)}\circ\varepsilon=\{\vec{\bm f}\,\}^{(qs_2)}$ for every 
$\varepsilon\in\mathscr{E}(\Sigma^{n-2}X_1)$, and 
$\{\vec{\bm f}\,\}^{(aqs_2)}\circ \Sigma^{n-3}\gamma=\{\vec{\bm f}\,\}^{(aqs_2)}$ 
for every $\gamma\in\mathscr{E}(\Sigma X_1)$.
\item[\rm(2)] $\{\vec{\bm f}\,\}^{(aq)}\circ \Sigma^{n-2}\gamma=\{\vec{\bm f}\,\}^{(aq)}$ 
for every $\gamma\in\mathscr{E}(X_1)$, 
and $-\{\vec{\bm f}\,\}^{(aq)}=\{\vec{\bm f}\,\}^{(aq)}$.
\item[\rm(3)] If the suspension $\Sigma^{n-2}:\mathscr{E}(X_1)\to\mathscr{E}(\Sigma^{n-2}X_1)$ 
is surjective, for example if $X_1$ is a sphere of positive dimension, then 
$\{\vec{\bm f}\,\}^{(q)}=\{\vec{\bm f}\,\}^{(aq)}$. 
\item[\rm(4)] If $\{\vec{\bm f}\,\}^{(\star)}$ is not empty for some $\star$, then 
$\{\vec{\bm f}\,\}^{(\star)}$ is not empty for all $\star$. 
\item[\rm(5)] If $\{\vec{\bm f}\,\}^{(\star)}$ contains $0$ for some $\star$, then 
$\{\vec{\bm f}\,\}^{(\star)}$ contains $0$ for all $\star$. 
\item[\rm(6)] $-\{\vec{\bm f}\,\}^{(\star)}=\{\vec{\bm f}\,\}^{(\star)}$ for $\star=q,\, qs_2,\, aq,\,aqs_2$.
\item[\rm(7)] If $n\ge 4$ and $\vec{\bm f}$ is $\star$-presentable for some $\star$, 
then $\vec{\bm f}$ is admissible and $\star$-presentable for all $\star$. 
\item[\rm(8)] If $\{\vec{\bm f}\,\}^{(\star)}=\{0\}$ for some $\star$, 
then $\{\vec{\bm f}\,\}=\{0\}$ for all $\star$ except $aq, q, q_2$. 
\end{enumerate}
\end{cor'}

\begin{prop'}[cf.\,p.26, p.25, and p.33 of \cite{W2}]
Given maps $f_{n+1}:X_{n+1}\to X_{n+2}$ and $f_0:X_0\to X_1$, we have 
\begin{gather}
f_{n+1}\circ\{f_n,\dots,f_1\}^{(\star)}\subset\{f_{n+1}\circ f_n,f_{n-1},\dots,f_1\}^{(\star)},\\
\{f_{n+1}\circ f_n,f_{n-1},\dots,f_1\}^{(\star)}\subset
\{f_{n+1},f_n\circ f_{n-1},f_{n-2},\dots,f_1\}^{(\star)},\\
\begin{split}
\{f_n,\dots,f_1\}^{(aq\ddot{s}_2)}\circ \Sigma^{n-2}f_0\subset \{f_n,&\dots,f_2,f_1\circ f_0\}^{(aq\ddot{s}_2)}\\
&\subset\{f_n,\dots,f_3,f_2\circ f_1,f_0\}^{(aq\ddot{s}_2)}.
\end{split}
\end{gather}
\end{prop'}

\begin{rem'}
We can prove the following analogues relations of (6.2.3). 
\begin{align*}
\{f_n,\dots,f_1\}^{(\ddot{s}_t)}\circ \Sigma^{n-2}f_0\subset \{f_n,&\dots,f_2,f_1\circ f_0\}^{(\ddot{s}_t)}\\
&\subset\{f_n,\dots,f_3,f_2\circ f_1,f_0\}^{(\ddot{s}_t)}.
\end{align*}
Details shall be written elsewhere.
\end{rem'}

\begin{exam'}[cf.\,Lemma 4.10 of \cite{T3}, Lemma 5.1 of \cite{G}, Example 6.6.2(2) below]
Let $p$ be an odd prime and 
$\alpha_1(3):\s^{2p}\to\s^3$ a map of which the homotopy class is of order $p$. 
For every integer $n\ge 3$, we set $\alpha_1(n)=\Sigma^{n-3}\alpha_1(3):\s^{n+2p-3}\to \s^n$ and 
$\Xi=\{\alpha_1(n),\alpha_1(n+2p-3),\alpha_1(n+2(2p-3)),\dots,\alpha_1(n+(p-1)(2p-3))\}^{(\star)}\subset\pi_{n+2p(p-1)-2}(\s^n)$. 
Take $n$ such that $n\ge 2p(p-1)$. 
Then the $p$-primary component of $\pi_{n+2p(p-1)-2}(\s^n)$ is $\bZ_p$, and the following can be proved: 
$\Xi$ contains an element of order $p$ and the order of any element of $\Xi$ is a multiple of $p$ so that 
$\Xi$ does not contain $0$. 
We need an argument for the proof, but we omit details.
\end{exam'}

\begin{proof}[Proof of Theorem 6.2.1(1)] 
It suffices to show that $\{\vec{\bm f}\,\}^{(s_t)}\subset \{\vec{\bm f}\,\}^{(\ddot{s}_t)}$. 
Let $\alpha\in\{\vec{\bm f}\,\}^{(s_t)}$ and $\{\mathscr{S}_r,\overline{f_r},\mathscr{A}_r\,|\,2\le r\le n\}$ an $s_t$-presentation 
of $\vec{\bm f}$ with $\alpha=\overline{f_n}\circ g_{n,n-1}$, where 
\begin{align*}
&\mathscr{S}_r=(X_{r-1},\Sigma X_{r-2},\dots,\Sigma^{r-2}X_1;C_{r,1},\dots,C_{r,r};\\
&\hspace{5cm} g_{r,1},\dots,g_{r,r-1};j_{r,1},\dots,j_{r,r-1});\\
&C_{r,1}=X_r,\ g_{r,1}=f_{r-1},\ \mathscr{A}_r=\{a_{r,s}\,|\,1\le s<r\},\  \Omega(\mathscr{A}_r)=\{\omega_{r,s}\,|\,1\le s<r\};\\
&\text{if $3\le r\le n$, then $C_{r,2}=X_r\cup_{f_{r-1}}CX_{r-1},\ j_{r,1}=i_{f_{r-1}}$, and $a_{r,1}=1_{C_{r,2}}$}. 
\end{align*}
We are going to construct an $\ddot{s}_t$-presentation $\{\mathscr{S}_r',\overline{f_r}',\mathscr{A}_r'\,|\,2\le r\le n\}$ of 
$\vec{\bm f}$ such that $\overline{f_n}'\circ g_{n,n-1}'=\alpha$. 

First we set $\mathscr{S}_2'=(X_1;X_2,X_2\cup_{f_1}CX_1;f_1;i_{f_1})$, $a_{2,1}'=1_{C_{2,2}'}$, $\mathscr{A}_2'=\{a_{2,1}'\}$, 
$\Omega_2'=\Omega(\mathscr{A}_2')=\{q_{f_1}'\}$, $e_{2}=a_{2,1}^{-1}:C_{2,2}'\to C_{2,2}$, and 
$\overline{f_2}'=\overline{f_2}\circ e_{2}$. 
Then $C_{2,1}'=C_{2,1}$, $e_{2}\circ j_{2,1}'=j_{2,1}$ and 
\begin{align*}
\omega_{2,1}\circ(e_2\cup C1_{X_2})&=q_{f_1}'\circ (a_{2,1}\cup C1_{X_2})\circ(e_2\cup C1_{X_2})\\
&\simeq q_{f_1}'=\omega_{2,1}'\quad(\text{by (4.1)}). 
\end{align*}

Secondly we set $\mathscr{S}_3'=\mathscr{S}_2'(\overline{f_2}',\mathscr{A}_2')$ and
$$
e_{3}=1_{X_3}\cup Ce_{2}:C_{3,3}'=X_3\cup_{\overline{f_2}'}C(X_2\cup_{f_1}CX_1)\to C_{3,3}=X_3\cup_{\overline{f_2}}CC_{2,2}.
$$
Then $C_{3,s}'=C_{3,s}\,(s=1,2),\  j_{3,1}'=j_{3,1},\  j_{3,2}'=1_{X_3}\cup Cj_{2,1}',\  e_{3}\circ j_{3,2}'=j_{3,2}$, 
and 
\begin{align*}
g_{3,2}'&=(\overline{f_2}'\cup C1_{X_2})\circ \omega_{2,1}'^{-1} 
=(\overline{f_2}\cup C1_{X_2})\circ(e_{2}\cup C1_{X_2})\circ \omega_{2,1}'^{-1}\\
&\simeq 
(\overline{f_2}\cup C1_{X_2})\circ \omega_{2,1}^{-1}=g_{3,2}.
\end{align*}
Take a homotopy $K^3:g_{3,2}\simeq g_{3,2}'$ and set 
\begin{align*}
&\Phi(K^3)=\Phi(g_{3,2},g_{3,2}',1_{\Sigma X_1},1_{X_3\cup_{f_2}CX_2};K^3) \\
&\hspace{2cm}: (X_3\cup_{f_2}CX_2)\cup_{g_{3,2}}C\Sigma X_1\to (X_3\cup_{f_2}CX_2)\cup_{g_{3,2}'}C\Sigma X_1,\\
&a_{3,2}'=\Phi(K^3)\circ a_{3,2}\circ e_{3}:C_{3,3}'\to (X_3\cup_{f_2}CX_2)\cup_{g_{3,2}'}C\Sigma X_1,\\
&a_{3,1}'=1_{C_{3,2}'},\ \mathscr{A}_3'=\{a_{3,1}', a_{3,2}'\},\ 
\overline{f_3}'=\begin{cases} \overline{f_3}:C_{3,2}'=C_{3,2}\to X_4 & n=3\\ \overline{f_3}\circ e_{3}:C_{3,3}'\to X_4 & n\ge 4\end{cases}.
\end{align*}
Then $\mathscr{A}_3'$ is a reduced structure on $\mathscr{S}_3'$. 
When $n=3$, $\{\mathscr{S}_r',\overline{f_r}',\mathscr{A}_r'\,|\, r=2,3\}$ is an $\ddot{s}_t$-presentation of $\vec{\bm f}$ 
such that $\overline{f_3}'\circ g_{3,2}'=\alpha$. 
When $n\ge 4$, by repeating the above process, we have 
$\mathscr{S}_r',\overline{f_r}',\mathscr{A}_r'$ and $e_{r}:C_{r,r}'\simeq C_{r,r}$ for $4\le r\le n$ such that 
$$
\left\{\begin{array}{@{\hspace{0.6mm}}l}
\mathscr{S}_r'=\mathscr{S}_{r-1}'(\overline{f_{r-1}}',\mathscr{A}_{r-1}'),\ e_r=1_{X_r}\cup Ce_{r-1};\\
C_{r,s}'=C_{r,s}\ (1\le s\le r-1),\  C_{r,r}'=X_r\cup_{\overline{f_{r-1}}'}CC_{r-1,r-1}';\\
j_{r,s}'=j_{r,s},\ a_{r,s}'=a_{r,s},\ g_{r,s}'=g_{r,s}\ (1\le s\le r-2);\\
\omega_{r,r-1}'\simeq \omega_{r,r-1}\circ (e_{r}\cup C1_{C_{r,r-1}}),\ g_{r,r-1}'\simeq g_{r,r-1};\\
\overline{f_r}'=\begin{cases} \overline{f_n}:C_{n,n-1}'=C_{n,n-1}\to X_{n+1} & r=n\\ \overline{f_r}\circ e_{r}:C_{r,r}'\to X_{r+1} & r<n\end{cases};\\
a_{r,r-1}'=\Phi(K^r)\circ a_{r,r-1}\circ e_r:C_{r,r}'\to C_{r,r-1}'\cup_{g_{r,r-1}'}C\Sigma^{r-2}X_1,
\end{array}\right.
$$
where $K^r:g_{r,r-1}\simeq g_{r,r-1}'$ and 
\begin{align*}
&\Phi(K^r)=\Phi(g_{r,r-1},g_{r,r-1}',1_{\Sigma^{r-2}X_1},1_{C_{r,r-1}};K^r)\\
&\hspace{2cm}:C_{r,r-1}\cup_{g_{r,r-1}}C\Sigma^{r-2}X_1\to C_{r,r-1}'\cup_{g_{r,r-1}'}C\Sigma^{r-2}X_1.
\end{align*}
Then $\mathscr{A}_r'$ is a reduced structure on $\mathscr{S}_r'$. 
Therefore $\{\mathscr{S}_r',\overline{f_r}',\mathscr{A}_r'\,|\,2\le r\le n\}$ is an $\ddot{s}_t$-presentation of 
$\vec{\bm f}$ such that $\overline{f_n}'\circ g_{n,n-1}'\simeq \overline{f_n}\circ g_{n,n-1}$ and hence 
$\alpha\in\{\vec{\bm f}\,\}^{(\ddot{s}_t)}$. 
This proves Theorem~6.2.1(1). 
\end{proof}

\begin{proof}[Proof of Theorem 6.2.1(2)] 
First we prove $\{\vec{\bm f}\,\}^{(aqs_2)}\subset\{\vec{\bm f}\,\}^{(aq\dot{s}_2)}$ 
which is equivalent to the first equality. 
Let $\alpha\in\{\vec{\bm f}\,\}^{(aqs_2)}$ 
and $\{\mathscr{S}_r,\overline{f_r},\Omega_r\,|\,2\le r\le n\}$ an $aqs_2$-presentation of 
$\vec{\bm f}$ with $\alpha=\overline{f_n}\circ g_{n,n-1}$. 
It suffices to construct an $aq\dot{s}_2$-presentation $\{\mathscr{S}_r',\overline{f_r}',\Omega_r'\,|\,2\le r\le n\}$ 
with $\alpha=\overline{f_n}'\circ g_{n,n-1}'$. 
Set $\mathscr{S}_2'=(X_1;X_2,X_2\cup_{f_1}CX_1;f_1;i_{f_1})$. 
Since $\mathscr{S}_2$ is a shft, we can take $e_2:C_{2,2}'=X_2\cup_{f_1}CX_1\simeq C_{2,2}$ such that 
$e_2\circ j_{2,1}'=j_{2,1}$. 
Set $\overline{f_2}'=\overline{f_2}\circ e_2$, 
$\omega_{2,1}'=\omega_{2,1}\circ (e_2\cup C1_{X_2}):C_{2,2}'\cup_{j_{2,1}'}CC_{2,1}'\to \Sigma X_1$, and 
$\Omega_2'=\{\omega_{2,1}'\}$. 
Set $\mathscr{S}_3'=\mathscr{S}_2'(\overline{f_2}',\Omega_2')$, $\Omega_3'=\widetilde{\Omega_2'}$, 
$e_3=1_{X_3}\cup Ce_2:C_{3,3}'\to C_{3,3}$, 
and $\overline{f_3}'=\begin{cases} \overline{f_3}:C_{3,2}'=C_{3,2}\to X_4 & n=3\\ 
\overline{f_3}\circ e_3:C_{3,3}'\to X_4 & n\ge 4\end{cases}$. 
Then $\omega_{3,2}'=\omega_{3,2}\circ(e_3\cup C1_{C_{3,2}})$ and 
$g_{3,2}'=(\overline{f_2}'\cup C1_{X_2})\circ \omega_{2,1}'^{-1}\simeq (\overline{f_2}\cup C1_{X_2})\circ
\omega_{2,1}^{-1}=g_{3,2}$. 
By continuing the construction inductively, we obtain an $aq\dot{s}_2$-presentation 
$\{\mathscr{S}_r',\overline{f_r}',\Omega_r'\,|\,2\le r\le n\}$ and 
$e_r: C_{r,r}'\simeq C_{r,r}$ such that $C_{r,s}'=C_{r,s}\ (1\le s<r<n)$, 
$\omega_{r,r-1}'=\omega_{r,r-1}\circ (e_r\cup 1_{C_{r,r-1}}):C_{r,r}'\cup CC_{r,r-1}'\to C_{r,r}\cup CC_{r,r-1}$ 
for $r< n$, and $\overline{f_r}'=\begin{cases} \overline{f_n} : C_{n,n-1}'=C_{n,n-1}\to X_{n+1} & r=n\\ 
\overline{f_r}\circ e_r:C_{r,r}'\to X_{r+1} & r<n\end{cases}$ 
so that $g_{n,n-1}'=(\overline{f_{n-1}}'\cup C1_{n-1,n-2})\circ\omega_{n-1,n-2}'^{-1}\simeq g_{n,n-1}$. 
Hence $\overline{f_n}'\circ g_{n,n-1}'\simeq \overline{f_n}\circ g_{n,n-1}$. 
This proves the first equality in (2). 

Secondly we prove the second equality in (2). 
Let $\{\mathscr{S}_r,\overline{f_r},\Omega_r\,|\,2\le r\le n\}$ be an $aq\dot{s}_2$-presentation of $\vec{\bm f}$ and set 
$\alpha=\overline{f_n}\circ g_{n,n-1}$. 
Set $\mathscr{S}_2'=\mathscr{S}_2$, $\omega_{2,1}'=q_{f_1}'$, $\Omega_2'=\{\omega_{2,1}'\}$, 
and $\theta=\omega_{2,1}\circ\omega_{2,1}'^{-1}\in\mathscr{E}(\Sigma X_1)$. 
By Remark 5.5(3), we define inductively 
$\mathscr{S}_3'=\mathscr{S}_2'(\overline{f_2},\Omega_2'),\ 
\Omega_3'=\widetilde{\Omega_2'};\ \dots\ ;\ \mathscr{S}_n'=\mathscr{S}_{n-1}'(\overline{f_{n-1}},\Omega_{n-1}'),\ 
  \Omega_n'=\widetilde{\Omega_{n-1}'}$. 
Then $\{\mathscr{S}_r',\overline{f_r},\Omega_r'\,|\,2\le r\le n\}$ is an $aq\ddot{s}_2$-presentation of $\vec{\bm f}$ 
such that $\omega_{r,s}'=\omega_{r,s}$ for $1\le s\le r-2$, and 
$\Sigma^{r-2}\theta\circ \omega_{r,r-1}'=\omega_{r,r-1}$.  
Hence $\alpha\circ \Sigma^{n-3}\theta=\overline{f_n}\circ g_{n,n-1}'\in\{\vec{\bm f}\,\}^{(aq\ddot{s}_2)}$ and so 
$\alpha\in \{\vec{\bm f}\,\}^{(aq\ddot{s}_2)}\circ \Sigma^{n-3}\theta^{-1}\subset
\{\vec{\bm f}\,\}^{(aq\ddot{s}_2)}\circ \Sigma^{n-3}\mathscr{E}(\Sigma X_1)$. 
Thus 
$\{\vec{\bm f}\,\}^{(aq\dot{s}_2)}\subset \{\vec{\bm f}\,\}^{(aq\ddot{s}_2)}\circ \Sigma^{n-3}\mathscr{E}(\Sigma X_1)$. 

Conversely let $\alpha\in\{\vec{\bm f}\,\}^{(aq\ddot{s}_2)}$ and $\theta\in \mathscr{E}(\Sigma X_1)$. 
Let $\{\mathscr{S}_r',\overline{f_r}',\Omega_r'\,|\,2\le r\le n\}$ be an $aq\ddot{s}_2$-presentation of $\vec{\bm f}$ 
with $\alpha=\overline{f_n}'\circ g_{n,n-1}'$. 
Let $\mathscr{S}_r$ be the iterated mapping cone which is obtained from $\mathscr{S}_r'$ by replacing $g_{r,r-1}'$ with 
$g_{r,r-1}'\circ\Sigma^{r-3}\theta$, and $\Omega_r$ the quasi-structure on $\mathscr{S}_r$ which is obtained 
from $\Omega_r'$ by replacing $\omega_{r,r-1}'$ with $\Sigma^{r-2}\theta^{-1}\circ\omega_{r,r-1}'$. 
Then $\{\mathscr{S}_r,\overline{f_r}',\Omega_r\,|\,2\le r\le n\}$ is an $aq\dot{s}_2$-presentation of $\vec{\bm f}$ 
such that $g_{r,s}=g_{r,s}'$ for $1\le s\le r-2$ and $g_{r,r-1}=g_{r,r-1}'\circ \Sigma^{r-3}\theta$.  
Hence $\alpha\circ \Sigma^{n-3}\theta=\overline{f_n}'\circ g_{n,n-1}'\circ \Sigma^{n-3}\theta
=\overline{f_n}'\circ g_{n,n-1}\in\{\vec{\bm f}\,\}^{(aq\dot{s}_2)}$. 
Thus 
$\{\vec{\bm f}\,\}^{(aq\dot{s}_2)}\supset \{\vec{\bm f}\,\}^{(aq\ddot{s}_2)}\circ \Sigma^{n-3}\mathscr{E}(\Sigma X_1)$. 
This proves the second equality in (2). 

Thirdly we prove the third and fourth equalities in (2). 
We prove 
\begin{gather}
\{\vec{\bm f}\,\}^{(qs_2)}\subset\{\vec{\bm f}\,\}^{(aq\ddot{s}_2)}\circ\mathscr{E}(\Sigma^{n-2}X_1),\\
\{\vec{\bm f}\,\}^{(aq\ddot{s}_2)}\circ\mathscr{E}(\Sigma^{n-2}X_1)\subset \begin{cases} \{\vec{\bm f}\,\}^{(q\dot{s}_2)} &n=3\\
\{\vec{\bm f}\,\}^{(q\ddot{s}_2)} & n\ge 4\end{cases}.
\end{gather}
If these are proved, then 
\begin{equation}
\begin{split}
\{\vec{\bm f}\,\}^{(qs_2)}&\subset\{\vec{\bm f}\,\}^{(aq\ddot{s}_2)}\circ\mathscr{E}(\Sigma^{n-2}X_1)\\
&\subset
\begin{cases}
\{\vec{\bm f}\,\}^{(q\dot{s}_2)}\subset\{\vec{\bm f}\,\}^{(qs_2)} & n=3\\
\{\vec{\bm f}\,\}^{(q\ddot{s}_2)}\subset\{\vec{\bm f}\,\}^{(q\dot{s}_2)}\subset\{\vec{\bm f}\,\}^{(qs_2)} & n\ge 4
\end{cases}
\end{split}
\end{equation}
so that the third and fourth equalities in (2) follow. 
To prove (6.2.4), let $\alpha\in\{\vec{\bm f}\,\}^{(qs_2)}$ and $\{\mathscr{S}_r,\overline{f_r},\Omega_r\,|\,2\le r\le n\}$ a 
$qs_2$-presentation of $\vec{\bm f}$ with $\alpha=\overline{f_n}\circ g_{n,n-1}$. 
Set 
$$
\mathscr{S}_2'=(X_1;X_2,X_2\cup_{f_1}CX_1;f_1;i_{f_1}),\quad j_{2,1}'=i_{f_1},\quad \omega_{2,1}'=q_{f_1}',\quad \Omega_2'=\{\omega_{2,1}'\}.
$$ 
Since $j_{2,1}$ is a homotopy cofibre of $f_1$ by the hypothesis, there exists a homotopy equivalence 
$e_2:C_{2,2}'=X_2\cup_{f_1}CX_1\to C_{2,2}$ such that $e_2\circ j_{2,1}'=j_{2,1}$. 
Set $\overline{f_2}'=\overline{f_2}\circ e_2$. 
Then $\overline{f_2}'$ is an extension of $f_2$ to $C_{2,2}'$. 
Set 
\begin{gather*}
\mathscr{S}_3'=\mathscr{S}_2'(\overline{f_2}',\Omega_2'),\ \Omega_3'=\widetilde{\Omega_2'},\ 
e_3=1_{X_3}\cup Ce_2:C_{3,3}'\to C_{3,3},\\
\overline{f_3}'=\begin{cases} \overline{f_3}:C_{3,2}'=C_{3,2}\to X_4 & n=3\\ \overline{f_3}\circ e_3:C_{3,3}'\to X_4 & n\ge 4\end{cases}.
\end{gather*}
Proceeding with the construction, we have an $aq\ddot{s}_2$-presentation 
$\{\mathscr{S}_r',\overline{f_r}',\Omega_r'\,|\,2\le r\le n\}$ of $\vec{\bm f}$ such that 
\begin{gather*}
C_{r,s}'=C_{r,s}\ (1\le s\le r-1),\ j_{r,s}'=j_{r,s}\ (1\le s\le r-2),\\
e_r=1_{X_r}\cup Ce_{r-1}:C_{r,r}'\to C_{r,r}\ (3\le r\le n),\\
\overline{f_r}'=\begin{cases} \overline{f_r}\circ e_r : C_{r,r}'\to X_{r+1} & r<n\\ \overline{f_n} : C_{n,n-1}'=C_{n,n-1}\to X_{n+1} & r=n\end{cases}.
\end{gather*} 
Set $\theta=\omega_{n-1,n-2}\circ(e_{n-1}\cup C1_{C_{n-1,n-2}})\circ \omega_{n-1,n-2}'^{-1} : \Sigma^{n-2}X_1\to \Sigma^{n-2}X_1$. 
Then $\theta$ is a homotopy equivalence and
\begin{align*}
\alpha&=\overline{f_n}\circ g_{n,n-1}=\overline{f_n}\circ (\overline{f_{n-1}}\cup C1_{C_{n-1,n-2}})\circ\omega_{n-1,n-2}^{-1}\\
&=\overline{f_n}\circ(\overline{f_{n-1}}\cup C1_{C_{n-1,n-2}})\circ (e_{n-1}\cup C1_{C_{n-1,n-2}})\circ\omega_{n-1,n-2}'^{-1}\circ\theta^{-1}\\
&=\overline{f_n}'\circ (\overline{f_{n-1}}'\cup C1_{n-1,n-2})\circ\omega_{n-1,n-2}'^{-1}\circ\theta^{-1}\\
&\in\{\vec{\bm f}\,\}^{(aq\ddot{s}_2)}\circ\theta^{-1}\subset\{\vec{\bm f}\,\}^{(aq\ddot{s}_2)}\circ\mathscr{E}(\Sigma^{n-2}X_1).
\end{align*} 
This proves (6.2.4). 

To prove (6.2.5), let $\alpha\in\{\vec{\bm f}\,\}^{(aq\ddot{s}_2)}$, 
$\varepsilon\in\mathscr{E}(\Sigma^{n-2}X_1)$, and 
$\{\mathscr{S}_r,\overline{f_r},\Omega_r|2\le r\le n\}$ 
an $aq\ddot{s}_2$-presentation of $\vec{\bm f}$ such that 
$\alpha=\overline{f_n}\circ g_{n,n-1}$. 
Let $\{\mathscr{S}_r',\overline{f_r}',\Omega_r'\,|\,
2\le r\le n\}$ be obtained from 
$\{\mathscr{S}_r,\overline{f_r},\Omega_r|2\le r\le n\}$ by 
replacing $\omega_{n-1,n-2}$ and $\mathscr{S}_n$ with $\varepsilon^{-1}\circ\omega_{n-1,n-2}$ and 
$\mathscr{S}_n'=\mathscr{S}_{n-1}(\overline{f_{n-1}},\Omega_{n-1}')$, respectively. 
Then $\{\mathscr{S}_r',\overline{f_r}',\Omega_r'\,|\,2\le r\le n\}$ is 
a $q\dot{s}_2$-presentation of $\vec{\bm f}$ if $n=3$ and 
a $q\ddot{s}_2$-presentation of $\vec{\bm f}$ if $n\ge 4$, 
and $g_{n,n-1}'=g_{n,n-1}\circ\varepsilon$ for $n\ge 3$.  
Hence 
$\alpha\circ\varepsilon=\overline{f_n}\circ g_{n,n-1}\circ\varepsilon=\overline{f_n}'\circ g_{n,n-1}'\in
\begin{cases} \{\vec{\bm f}\,\}^{(q\dot{s}_2)} & n=3\\
\{\vec{\bm f}\,\}^{(q\ddot{s}_2)} & n\ge 4\end{cases}.
$  
This proves (6.2.5). 

Fourthly it follows from (6.2.6) that 
$$
\{\vec{\bm f}\,\}^{(q\dot{s}_2)}=\begin{cases} \{\vec{\bm f}\,\}^{(aq\ddot{s}_2)}\circ\mathscr{E}(\Sigma X_1) & n=3 \\
\{\vec{\bm f}\,\}^{(q\ddot{s}_2)} & n\ge 4\end{cases}.
$$ 
By definition, we have $\{\vec{\bm f}\,\}^{(aq\ddot{s}_2)}=\{\vec{\bm f}\,\}^{(q\ddot{s}_2)}$ for $n=3$. 
Hence we obtain the fifth equality in (2). 
\end{proof}

\begin{proof}[Proof of Theorem 6.2.1(3)] 
First we prove $\{\vec{\bm f}\,\}^{(q)}\subset \{\vec{\bm f}\,\}^{(aq)}\circ\mathscr{E}(\Sigma^{n-2}X_1)$. 
Let $\alpha\in \{\vec{\bm f}\,\}^{(q)}$ and $\{\mathscr{S}_r,\overline{f_r},\Omega_r\,|\,2\le r\le n\}$ 
a $q$-presentation of $\vec{\bm f}$ with $\alpha=\overline{f_n}\circ g_{n,n-1}$. 
We define inductively
$$
\mathscr{S}_2'=\mathscr{S}_2,\, \Omega_2'=\Omega_2;\ \mathscr{S}_{k+1}'
=\mathscr{S}_k'(\overline{f_k},\Omega_k'),\, \Omega_{k+1}'=\widetilde{\Omega_k'}\ (2\le k<n).
$$
By Remark 5.5(3), this definition is possible and $\mathscr{S}_r', \mathscr{S}_r$ have the same edge. 
Then $\{\mathscr{S}_r',\overline{f_r},\Omega_r'\,|\,2\le r\le n\}$ is an $aq$-presentation of $\vec{\bm f}$ and 
\begin{align*}
\{\vec{\bm f}\}^{(aq)}\ni \overline{f_n}\circ g_{n,n-1}'
&=\overline{f_n}\circ(\overline{f_{n-1}}\cup C1_{C_{n-1,n-2}})\circ\omega_{n-1,n-2}'^{-1}\\
&=\overline{f_n}\circ(\overline{f_{n-1}}\cup C1_{C_{n-1,n-2}})\circ\omega_{n-1,n-2}^{-1}\circ\varepsilon_0
=\alpha\circ\varepsilon_0
\end{align*}
where $\varepsilon_0=\omega_{n-1,n-2}\circ\omega_{n-1,n-2}'^{-1}\in\mathscr{E}(\Sigma^{n-2}X_1)$. 
Hence 
$$
\alpha\in\{\vec{\bm f}\,\}^{(aq)}\circ\varepsilon_0^{-1}\subset\{\vec{\bm f}\,\}^{(aq)}\circ\mathscr{E}(\Sigma^{n-2}X_1)
$$
and so $\{ \vec{\bm f}\,\}^{(q)}\subset \{\vec{\bm f}\,\}^{(aq)}\circ\mathscr{E}(\Sigma^{n-2}X_1)$.  

Secondly we prove $\{ \vec{\bm f}\,\}^{(q)}\supset \{\vec{\bm f}\,\}^{(aq)}\circ\mathscr{E}(\Sigma^{n-2}X_1)$. 
Let $\alpha\in\{\vec{\bm f}\,\}^{(aq)}$, $\varepsilon\in\mathscr{E}(\Sigma^{n-2}X_1)$, and  
$\{\mathscr{S}_r,\overline{f_r},\Omega_r\,|\,2\le r\le n\}$ an $aq$-presentation of $\vec{\bm f}$ 
with $\alpha=\overline{f_n}\circ g_{n,n-1}$. 
We set 
$$
\Omega_{n-1}'=\begin{cases} \{\varepsilon^{-1}\circ \omega_{2,1}\} & n=3\\ 
\{\omega_{n-1,s},\,\varepsilon^{-1}\circ\omega_{n-1,n-2}\,|\,1\le s\le n-3\} & n\ge 4\end{cases}
$$ 
which is a quasi-structure on $\mathscr{S}_{n-1}$. 
Set $\mathscr{S}_n'=\mathscr{S}_{n-1}(\overline{f_{n-1}},\Omega_{n-1}')$. 
Since $\mathscr{S}_n'$ is obtained from $\mathscr{S}_n$ by replacing $g_{n,n-1}$ with $g_{n,n-1}\circ\varepsilon$, 
it follows that
$$
\{\mathscr{S}_r,\overline{f_r},\Omega_r\,|\,2\le r\le n-2\}\cup\{\mathscr{S}_{n-1},
\overline{f_{n-1}},\Omega_{n-1}',\mathscr{S}_n',\overline{f_n},\Omega_n\}
$$
is a $q$-presentation of $\vec{\bm f}$ and it represents $\alpha\circ \varepsilon$. 
Hence $\alpha\circ \varepsilon\in\{\vec{\bm f}\,\}^{(q)}$. 
Thus $\{\vec{\bm f}\,\}^{(q)}\supset\{\vec{\bm f}\,\}^{(aq)}\circ\mathscr{E}(\Sigma^{n-2}X_1)$. 

Therefore $\{\vec{\bm f}\,\}^{(q)}=\{\vec{\bm f}\,\}^{(aq)}\circ\mathscr{E}(\Sigma^{n-2}X_1)$. 
\end{proof}

\begin{proof}[Proof of Theorem 6.2.1(4)] 
We prove
\begin{gather}
\{\vec{\bm f}\,\}^{(\ddot{s}_t)}\subset \{\vec{\bm f}\,\}^{(aq\ddot{s}_2)}\circ\mathscr{E}(\Sigma^{n-2}X_1),\\ 
\{\vec{\bm f}\,\}^{(\ddot{s}_t)}\circ\mathscr{E}(\Sigma^{n-2}X_1)\supset \{\vec{\bm f}\,\}^{(aq\ddot{s}_2)}. 
\end{gather}
If these are done, then, by applying $\mathscr{E}(\Sigma^{n-2}X_1)$ to them from the right, we have the equality.  
To prove (6.2.7), let 
$\alpha\in\{\vec{\bm f}\,\}^{(\ddot{s}_t)}$ and $\{\mathscr{S}_r,\overline{f_r},\mathscr{A}_r\,|\,2\le r\le n\}$ 
an $\ddot{s}_t$-presentation of $\vec{\bm f}$ with $\alpha=\overline{f_n}\circ g_{n,n-1}$. 
We define inductively
$$
\mathscr{S}_2'=\mathscr{S}_2, \Omega_2'=\Omega(\mathscr{A}_2);\, 
\mathscr{S}_{k+1}'=\mathscr{S}_k'(\overline{f_k},\Omega_k'),\ \Omega_{k+1}'=
\widetilde{\Omega_k'}\ (2\le k< n).
$$
By Remark 5.5(3), this definition is possible, and $\mathscr{S}_r', \mathscr{S}_r$ have the same edge. 
Then $\{\mathscr{S}_r',\overline{f_r},\Omega_r'\,|\,2\le r\le n\}$ is an $aq\ddot{s}_2$-presentation  of 
$\vec{\bm f}$ and 
\begin{align*}
\{\vec{\bm f}\,\}^{(aq\ddot{s}_2)}\ni \overline{f_n}\circ g_{n,n-1}'
&=\overline{f_n}\circ(\overline{f_{n-1}}\cup C1_{C_{n-1,n-2}})\circ\omega_{n-1,n-2}'^{-1}\\
&=\overline{f_n}\circ(\overline{f_{n-1}}\cup C1_{C_{n-1,n-2}})\circ\omega_{n-1,n-2}^{-1}\circ\varepsilon_0
=\alpha\circ\varepsilon_0,
\end{align*}
where $\varepsilon_0=\omega_{n-1,n-2}\circ\omega_{n-1,n-2}'^{-1}\in\mathscr{E}(\Sigma^{n-2}X_1)$. 
Hence 
$$
\alpha\in\{\vec{\bm f}\,\}^{(aq\ddot{s}_2)}\circ\varepsilon^{-1}_0\subset
\{\vec{\bm f}\,\}^{(aq\ddot{s}_2)}\circ\mathscr{E}(\Sigma^{n-2}X_1). 
$$
This proves (6.2.7). 

To prove (6.2.8), let $\alpha\in\{\vec{\bm f}\,\}^{(aq\ddot{s}_2)}$ and $\{\mathscr{S}_r,\overline{f_r},\Omega_r\,|\,2\le r\le n\}$ 
an $aq\ddot{s}_2$-presentation of $\vec{\bm f}$ with $\alpha=\overline{f_n}\circ g_{n,n-1}$. 
Set $\mathscr{S}_2'=\mathscr{S}_2$ and $\mathscr{A}_2'=\{1_{C_{2,2}}\}$. 
We define inductively 
$\mathscr{S}_{r+1}'=\mathscr{S}_r'(\overline{f_r},\mathscr{A}_r')$ and $\mathscr{A}_{r+1}'$ is a reduced structure on $\mathscr{S}_{r+1}'$ for $r\ge 2$. 
By Remark 5.5(3), this definition is possible, and $\mathscr{S}_r', \mathscr{S}_r$ have the same edge. 
Then $\{\mathscr{S}_r',\overline{f_r},\mathscr{A}_r'\,|\,2\le r\le n\}$ is an 
$\ddot{s}_t$-presentation of $\vec{\bm f}$ and 
\begin{align*}
\{\vec{\bm f}\,\}^{(\ddot{s}_t)}\ni \overline{f_n}\circ g_{n,n-1}'
&=\overline{f_n}\circ (\overline{f_{n-1}}\cup C1_{C_{n-1,n-2}})\circ\omega_{n-1,n-2}'^{-1}\\
&=\overline{f_n}\circ (\overline{f_{n-1}}\cup C1_{C_{n-1,n-2}})\circ\omega_{n-1,n-2}^{-1}\circ\varepsilon_0
=\alpha\circ\varepsilon_0,
\end{align*}
where 
$\Omega(\mathscr{A}_{n-1}')=\{\omega_{n-1,s}'\,|\,1\le s<n-1\}$ and 
$\varepsilon_0=\omega_{n-1,n-2}\circ\omega_{n-1,n-2}'^{-1}\in\mathscr{E}(\Sigma^{n-2}X_1)$. 
Hence $\alpha\in\{\vec{\bm f}\,\}^{(\ddot{s}_t)}\circ\varepsilon_0^{-1}\subset \{\vec{\bm f}\,\}^{(\ddot{s}_t)}\circ\mathscr{E}(\Sigma^{n-2}X_1)$. 
This proves (6.2.8) and completes the proof of Theorem~6.2.1(4).
\end{proof}

\begin{proof}[Proof of Theorem 6.2.1(5)]
Let $\alpha\in\{\vec{\bm f}\}^{(q)}$ and $\{\mathscr{S}_r,\overline{f_r},\Omega_r\,|\,2\le r\le n\}$ a 
$q$-presenta-tion of $\vec{\bm f}$ with $\alpha=\overline{f_n}\circ g_{n,n-1}$. 
We are going to define an $aq\ddot{s}_2$-presentation $\{\mathscr{S}_r',\overline{f_r}',\Omega_r'\,|\,2\le r\le n\}$ of $\vec{\bm f}$ such that 
$\overline{f_n}'\circ g_{n,n-1}'=\alpha\circ \theta$ for some map $\theta:\Sigma^{n-2}X_1\to \Sigma^{n-2}X_1$ 
(notice that $\theta$ is not necessarily a homotopy equivalence). 
Now set
$$
\mathscr{S}_2'=(X_1;X_2,X_2\cup_{f_1}CX_1;f_1;i_{f_1}),\quad \Omega_2'=\{q_{f_1}'\}.
$$
Since $\mathscr{S}_2$ is a quasi iterated mapping cone and $j_{2,1}'=i_{f_1}$ is a cofibration, there exists a map 
(not necessarily a homotopy equivalence) $e_2:C_{2,2}'\to C_{2,2}$ such that $e_2\circ j_{2,1}'=j_{2,1}$. 
Set $\overline{f_2}'=\overline{f_2}\circ e_2$. 
Then $\overline{f_2}'\circ j_{2,1}'=f_2$ and so $\overline{f_2}'$ is an extension of $f_2$ to $C_{2,2}'$. 
Set 
\begin{align*}
&\mathscr{S}_3'=\mathscr{S}_2'(\overline{f_2}',\Omega_2'),\ \Omega_3'=\widetilde{\Omega_2'},\ 
 e_3=1_{X_3}\cup Ce_2:C_{3,3}'\to C_{3,3},\\ 
&\hspace{2cm}\overline{f_3}'=\begin{cases} \overline{f_3}:C_{3,2}'=C_{3,2}\to X_4 & n=3\\ \overline{f_3}\circ e_3:C_{3,3}'\to X_4 & n\ge 4\end{cases}.
\end{align*}
Proceeding with the construction, we have an $aq\ddot{s}_2$-presentation $\{\mathscr{S}_r',\overline{f_r}',\Omega_r'\,|\, 2\le r\le n\}$ 
of $\vec{\bm f}$ and maps $e_r:C_{r,r}'\to C_{r,r}\ (2\le r\le n-1)$ such that  
\begin{gather*}
C_{r,s}'=C_{r,s}\ (1\le s\le r-1),\ j_{r,s}'=j_{r,s}\ (1\le s\le r-2),\\
 e_r\circ j_{r,r-1}'=j_{r,r-1}\ (2\le r\le n-1),\\ 
\overline{f_r}'=\begin{cases} \overline{f_n} : C_{n,n-1}'=C_{n,n-1}\to X_{n+1} & r=n\\ \overline{f_r}\circ e_r : C_{r,r}'\to X_{r+1} & r<n\end{cases}.
\end{gather*}
Set $\theta=\omega_{n-1,n-2}\circ (e_{n-1}\cup C1_{C_{n-1,n-2}'})\circ\omega_{n-1,n-2}'^{-1}:\Sigma^{n-2}X_1\to \Sigma^{n-2}X_1$. 
Then $\omega_{n-1,n-2}^{-1}\circ \theta=(e_{n-1}\cup C1_{C_{n-1,n-2}'})\circ\omega_{n-1,n-2}'^{-1}$ and 
\begin{align*}
\{\vec{\bm f}\,\}^{(aq\ddot{s}_2)}\ni&\overline{f_n}'\circ g_{n,n-1}'=\overline{f_n}'\circ(\overline{f_{n-1}}'\cup C1_{C_{n-1,n-2}})\circ\omega_{n-1,n-2}'^{-1}\\
&=\overline{f_n}\circ(\overline{f_{n-1}}\cup C1_{C_{n-1,n-2}})\circ (e_{n-1}\cup C1_{C_{n-1,n-2}})\circ\omega_{n-1,n-2}'^{-1}\\
&=\overline{f_n}\circ(\overline{f_{n-1}}\cup C1_{C_{n-1,n-2}})\circ\omega_{n-1,n-2}^{-1}\circ \theta\\
&=\alpha\circ \theta.
\end{align*}
Since $\{\vec{\bm f}\,\}^{(aq\ddot{s}_2)}\subset \{\vec{\bm f}\,\}^{(aq\ddot{s}_2)}\circ\mathscr{E}(\Sigma^{n-2}X_1)=
\{\vec{\bm f}\,\}^{(\ddot{s}_t)}\circ\mathscr{E}(\Sigma^{n-2}X_1)$ by (4), we have $\alpha\circ\theta=\beta\circ\gamma$ 
for some $\beta\in\{\vec{\bm f}\,\}^{(\ddot{s}_t)}$ and $\gamma\in\mathscr{E}(\Sigma^{n-2}X_1)$. Set $\theta'=\theta\circ\gamma^{-1}$. 
Then $\alpha\circ\theta'=\beta\in\{\vec{\bm f}\,\}^{(\ddot{s}_t)}$. 
\end{proof}

\begin{proof}[Proof of Corollary 6.2.2]
To prove (1), 
let $\varepsilon\in\mathscr{E}(\Sigma^{n-2}X_1)$. 
By composing $\varepsilon$ from the right to equalities $\{\vec{\bm f}\,\}^{(q)}=\{\vec{\bm f}\,\}^{(aq)}\circ\mathscr{E}(\Sigma^{n-2}X_1)$ 
in Theorem~6.2.1(3) and 
$\{\vec{\bm f}\,\}^{(qs_2)}=\{\vec{\bm f}\,\}^{(aq\ddot{s}_2)}\circ\mathscr{E}(\Sigma^{n-2}X_1)$ in Theorem 6.2.1(2), we have 
$\{\vec{\bm f}\,\}^{(q)}\circ\varepsilon=\{\vec{\bm f}\,\}^{(q)}$ and 
 $\{\vec{\bm f}\,\}^{(qs_2)}\circ\varepsilon=\{\vec{\bm f}\,\}^{(qs_2)}$. 
Let $\gamma\in\mathscr{E}(\Sigma X_1)$. 
By composing $\Sigma^{n-3}\gamma$ from the right to equality 
$\{\vec{\bm f}\,\}^{(aqs_2)}=\{\vec{\bm f}\,\}^{(aq\ddot{s}_2)}\circ \Sigma^{n-3}\mathscr{E}(\Sigma X_1)$
 in Theorem 6.2.1(2), we have $\{\vec{\bm f}\,\}^{(aqs_2)}\circ \Sigma^{n-3}\gamma=\{\vec{\bm f}\,\}^{(aqs_2)}$. 

To prove (2), 
let $\alpha\in\{\vec{\bm f}\,\}^{(aq)}$ and $\{\mathscr{S}_r,\overline{f_r},\Omega_r\,|\,2\le r\le n\}$ 
an $aq$-presen-tation of $\vec{\bm f}$ with $\alpha=\overline{f_n}\circ g_{n,n-1}$. 
For the first equality in (2), let $\gamma\in\mathscr{E}(X_1)$, and set 
$$
\omega_{r,s}'=\begin{cases} \omega_{r,s} & 1\le s\le r-2\\ \Sigma^{r-1}\gamma\circ\omega_{r,r-1} & 1\le s=r-1\end{cases}.
$$
Then $\Omega_r'=\{\omega_{r,s}'\,|\,1\le s<r\}$ is a quasi-structure on $\mathscr{S}_r$ and $\Omega_{r+1}'=\widetilde{\Omega_r'}$. 
For $r\ge 2$ and $1\le s<r<n$, we set 
$g_{r+1,s+1}'=\begin{cases} g_{r+1,s+1} & s\le r-2\\ g_{r+1,r}\circ \Sigma^{r-1}\gamma^{-1} & s=r-1 \end{cases}$, that is, 
$$
g_{r+1,s+1}'=\begin{cases} (\overline{f_r}^{s+1}\cup C1_{C_{r,s}})\circ\omega_{r,s}^{-1} & s\le r-2\\
(\overline{f_r}\cup C1_{C_{r,r-1}})\circ\omega_{r,r-1}^{-1}\circ \Sigma^{r-1}\gamma^{-1} & s=r-1
\end{cases}.
$$
Set $\mathscr{S}_2'=\mathscr{S}_2$ and let $\mathscr{S}_{r+1}'$ be the iterated mapping cone obtained from $\mathscr{S}_{r+1}$ by 
replacing $g_{r+1,r}$ with $g_{r+1,r}'$ for $2\le r\le n-1$. 
Then $\mathscr{S}_r'$ and $\mathscr{S}_r$ have the same edge, and $\{\mathscr{S}_r',\overline{f_r},\Omega_r'\,|\,2\le r\le n\}$ 
is an $aq$-presentation of $\vec{\bm f}$ such that 
$$
\overline{f_n}\circ g_{n,n-1}'=\overline{f_n}\circ g_{n,n-1}\circ \Sigma^{n-2}\gamma^{-1}=\alpha\circ \Sigma^{n-2}\gamma^{-1}.
$$
Hence $\{\vec{\bm f}\,\}^{(aq)}\circ \Sigma^{n-2}\gamma^{-1}\subset\{\vec{\bm f}\,\}^{(aq)}$ and so 
$\{\vec{\bm f}\,\}^{(aq)}\subset \{\vec{\bm f}\,\}^{(aq)}\circ \Sigma^{n-2}\gamma$. 
By taking $\gamma^{-1}$ instead of $\gamma$, we have 
$\{\vec{\bm f}\,\}^{(aq)}\circ \Sigma^{n-2}\gamma\subset\{\vec{\bm f}\,\}^{(aq)}$. 
Therefore 
we obtain the first equality in (2). 
For the second equality in (2), 
set $\omega_{2,1}^*=(-1_{\Sigma X_1})\circ\omega_{2,1},\ 
\omega_{3,2}^*=\Sigma(-1_{\Sigma X_1})\circ\omega_{3,2},\dots,\omega_{n,n-1}^*
=\Sigma^{n-2}(-1_{\Sigma X_1})\circ\omega_{n,n-1}$, $\Omega_2^*=\{\omega_{2,1}^*\}$, 
and $\Omega_r^*=\{\omega_{r,1},\dots,\omega_{r,r-2},\omega_{r,r-1}^*\}$ for $3\le r\le n$. 
Set $\mathscr{S}_2^*=\mathscr{S}_2$ and, for $r\ge 3$, let $\mathscr{S}_r^*$ be 
the iterated mapping cone obtained from $\mathscr{S}_r$ by replacing $g_{r,r-1}$ with 
$g_{r,r-1}^*=g_{r,r-1}\circ\Sigma^{r-3}(-1_{\Sigma X_1})$. 
Then $\{\mathscr{S}_r^*,\overline{f_r},\Omega_r^*\,|\,2\le r\le n\}$ is an $aq$-presentation of $\vec{\bm f}$ 
by Lemma 4.3(3) such that $-\alpha=\overline{f_n}\circ g_{n,n-1}^*\in\{\vec{\bm f}\,\}^{(aq)}$. 
Hence $-\{\vec{\bm f}\,\}^{(aq)}\subset\{\vec{\bm f}\,\}^{(aq)}$. 
By composing $-1_{\Sigma^{n-2}X_1}$ from the right to the last relation, we have 
$\{\vec{\bm f}\,\}^{(aq)}\subset -\{\vec{\bm f}\,\}^{(aq)}$. 
Therefore $-\{\vec{\bm f}\,\}^{(aq)}=\{\vec{\bm f}\,\}^{(aq)}$. 

The assertion (3) follows from (2) and Theorem 6.2.1(3). 

If $\{\vec{\bm f}\,\}^{(\star)}$ is not empty for some $\star$, 
then $\{\vec{\bm f}\,\}^{(q)}$ is not empty by (6.1.1) so that $\{\vec{\bm f}\,\}^{(aq\ddot{s}_2)}$ and 
$\{\vec{\bm f}\,\}^{(\ddot{s}_t)}$ are not empty by Theorem 6.2.1(5), and so $\{\vec{\bm f}\,\}^{(\star)}$ is 
not empty for every $\star$ by (6.1.1) and Theorem 6.2.1. 
This proves (4). 

If $\{\vec{\bm f}\,\}^{(\star)}$ contains $0$ for some $\star$, then $\{\vec{\bm f}\,\}^{(q)}$ contains $0$ by (6.1.1), 
and so $\{\vec{\bm f}\,\}^{(\star)}$ contains $0$ for every $\star$ by Theorem 6.2.1(5)  and (6.1.1). 
This proves (5). 

By setting $\varepsilon=-1_{\Sigma^{n-2}X_1}$ and $\gamma=-1_{\Sigma X_1}$ in (1), 
we have (6) for $\star=q, qs_2, aqs_2$. 
The assertion (6) for $\star=aq$ was proved in (2). 

Suppose that $n\ge 4$ and $\vec{\bm f}$ is $\star$-presentable for some $\star$. 
Then $\vec{\bm f}$ is $\star$-presentable for all $\star$ by (4) so that in particular 
it is $aq\ddot{s}_2$-presentable and so it is admissible by definitions and (4.2). 
This proves (7). 

We have (8) from (6.1.1) and Theorem 6.2.1(1),(2),(4).  
\end{proof}

\begin{proof}[Proof of Proposition 6.2.3]
(6.2.1) is easily obtained from definitions. 

About (6.2.2), it suffices to prove it for $\star=aq\ddot{s}_2, \ddot{s}_t,aq,q_2$ by Theorem~6.2.1(1)-(3). 
We prove (6.2.2) for $\star=aq\ddot{s}_2$, because other cases can be proved similarly. 
Let $\alpha\in\{f_{n+1}\circ f_n,f_{n-1},\dots,f_1\}^{(aq\ddot{s}_2)}$ and 
$\{\mathscr{S}_r,\overline{f_r},\Omega_r\,|\,2\le r< n\}\cup\{\mathscr{S}_n,\overline{f_{n+1}\circ f_n},\Omega_n\}$ 
an $aq\ddot{s}_2$-presentation of $(f_{n+1}\circ f_n,f_{n-1},\dots,f_1)$ such that $\alpha=\overline{f_{n+1}\circ f_n}\circ g_{n,n-1}$. 
Then $C_{n,r}=X_n\cup_{\overline{f_{n-1}}^{r-1}}CC_{n-1,r-1}$ and $\overline{f_{n+1}\circ f_n}$ is an extension of $f_{n+1}\circ f_n$ to $C_{n,n-1}$. 
Set $\mathscr{S}_n'=\mathscr{S}_{n-1}(f_n\circ\overline{f_{n-1}},\Omega_{n-1})$ and $\Omega_n'$ the typical quasi-structure on $\mathscr{S}_n'$. 
Then $C_{n,r}'=X_{n+1}\cup_{f_n\circ\overline{f_{n-1}}^{r-1}}CC_{n-1,r-1}$. 
Define $\overline{f_n}':C_{n,n-1}'=X_{n+1}\cup_{f_n\circ\overline{f_{n-1}}^{n-2}}CC_{n-1,n-2}\to X_{n+2}$ 
by $\overline{f_n}'|_{X_{n+1}}=f_{n+1}$ and $\overline{f_n}'|_{CC_{n-1,n-2}}=\overline{f_{n+1}\circ f_n}|_{CC_{n-1,n-2}}$. 
The map $\overline{f_n}'$ is a well-defined extension of $f_{n+1}$ to $C_{n,n-1}'$ and 
$\overline{f_n}'\circ(f_n\cup C1_{C_{n-1,n-2}})=\overline{f_{n+1}\circ f_n}$. 
Hence $\{\mathscr{S}_r,\overline{f_r},\Omega_r\,|\,2\le r\le n-2\}\cup\{\mathscr{S}_{n-1},f_n\circ\overline{f_{n-1}},
\Omega_{n-1}\}\cup\{\mathscr{S}_n',\overline{f_n}',\Omega_n'\}$ is an $aq\ddot{s}_2$-presentation of 
$(f_{n+1},f_n\circ f_{n-1},\dots,f_1)$ and it represents $\overline{f_n}'\circ (f_n\cup C1_{C_{n-1,n-2}})\circ g_{n,n-1}=\alpha$ 
so that $\alpha\in\{f_{n+1},f_n\circ f_{n-1},\dots,f_1\}^{(aq\ddot{s}_2)}$. 
This proves (6.2.2) for $\star=aq\ddot{s}_2$. 

To prove the first containment of (6.2.3), let 
$\{\mathscr{S}_r,\overline{f_r},\Omega_r\,|\, 2\le r\le n\}$ 
be an $aq\ddot{s}_2$-presentation 
of $(f_n,\dots,f_1)$ and set $\alpha=\overline{f_n}\circ g_{n,n-1}$. 
We are going to construct an $aq\ddot{s}_2$-presentation $\{\mathscr{S}_r',\overline{f_r}',\Omega_r'\,|\,2\le r\le n\}$ 
of $(f_n,\dots,f_2,f_1\circ f_0)$ with $\overline{f_n}'\circ g_{n,n-1}'=\alpha\circ \Sigma^{n-2}f_0$. 
We set 
\begin{gather*}
X_1'=X_0,\ X_k'=X_k\ (2\le k\le n+1),\ f_1'=f_1\circ f_0,\ f_k'=f_k\ (2\le k\le n),\\
e_{1,0}=1_{\{*\}}:C_{1,0}'\to C_{1,0},\ e_{1,1}=f_0:C_{1,1}'=X_0\to C_{1,1}=X_1,\\
\mathscr{S}_2'=(X_1';X_2',X_2'\cup_{f_1'}CX_1';f_1';i_{f_1'}),\ \Omega_2'=\{q'_{f_1'}\},\\
e_{2,0}=1_{\{*\}}:C_{2,0}'\to C_{2,0},\ e_{2,s}=1_{X_2}\cup Ce_{1,s-1}:C_{2,s}'\to C_{2,s}\ (s=1,2),\\
\overline{f_2}'=\overline{f_2}\circ e_{2,2}:C_{2,2}'\to X_3,\ 
\mathscr{S}_3'=\mathscr{S}_2'(\overline{f_2}',\Omega_2'),\ \Omega_3'=\widetilde{\Omega_2'}.
\end{gather*}
Then $C_{3,s}'=C_{3,s}$ for $s=0,1,2$. 
We set 
\begin{gather*}
e_{3,0}=1_{\{*\}}:C_{3,0}'\to C_{3,0},\ e_{3,s}=1_{X_3}\cup Ce_{2,s-1}:C_{3,s}'\to C_{3,s}\ (1\le s\le 3),\\
\overline{f_3}'=\begin{cases} \overline{f_3}\circ e_{3,2}:C_{3,2}'\to X_4 & n=3\\
\overline{f_3}\circ e_{3,3}:C_{3,3}'\to X_4 & n\ge 4\end{cases}.
\end{gather*}
Then $e_{3,s}=1_{C_{3,s}}$ for $0\le s\le 2$ and 
$g_{3,2}\circ \Sigma f_0\simeq g_{3,2}'$. 
By repeating the process, we have an $aq\ddot{s}_2$-presentation 
$\{\mathscr{S}_r',\overline{f_r}',\Omega_r\,|\,2\le r\le n\}$ of 
$(f_n,\dots,f_2,f_1\circ f_0)$ such that $C_{r,s}'=C_{r,s}\ (1\le s<r)$, 
$\overline{f_n}'=\overline{f_n}$ and 
$g_{n,n-1}\circ\Sigma^{n-2}f_0\simeq g_{n,n-1}'$ so that 
$\overline{f_n}\circ g_{n,n-1}\circ \Sigma^{n-2}f_0\simeq\overline{f_n}'\circ g_{n,n-1}'$. 
Hence $\alpha\circ\Sigma^{n-2}f_0\in\{f_n,\dots,f_2,f_1\circ f_0\}^{(aq\ddot{s}_2)}$. 
This proves the first containment of (6.2.3). 

In the rest of the proof we prove the second containment of (6.2.3). 
Let $\alpha\in\{f_n,\dots,f_2,f_1\circ f_0\}^{(aq\ddot{s}_2)}$ and 
$\{\mathscr{S}_r,\overline{f_r},\Omega_r\,|\,2\le r\le n\}$ an 
$aq\ddot{s}_2$-presentation of $(f_n,\dots,f_2,f_1\circ f_0)$ with 
$\overline{f_n}\circ g_{n,n-1}=\alpha$. 
Set 
\begin{gather*}
X_1^*=X_0,\quad X_2^*=X_1,\quad f_1^*=f_0,\quad X_k^*=X_k\ (3\le k\le n),\\
 f_2^*=f_2\circ f_1, \quad f_k^*=f_k\ (3\le k\le n),\\ 
\mathscr{S}_2^*=(X_1^*;X_2^*, X_2^*\cup_{f_1^*}CX_1^*;f_1^*;i_{f_1^*}),\quad \Omega_2^*=\{q'_{f_1^*}\},\\ 
e_{2,2}=f_1\cup C1_{X_0}:C_{2,2}^*=X_1\cup_{f_0}CX_0\to C_{2,2}=X_2\cup_{f_1\circ f_0}CX_0,\\
e_{2,1}=f_1:C_{2,1}^*\to C_{2,1},\quad \overline{f_2}^*=\overline{f_2}\circ e_{2,2}:C_{2,2}^*\to X_3,\\
\mathscr{S}_3^*=\mathscr{S}_2^*(\overline{f_2}^*,\Omega_2^*),\quad \Omega_3^*=\widetilde{\Omega_2^*},\quad 
e_{3,1}=1_{X_3},\\
e_{3,2}=1_{X_3}\cup Cf_1:C_{3,2}^*=X_3\cup_{f_2\circ f_1}CX_1
\to X_3\cup_{f_2}CX_2=C_{3,2},\\
e_{3,3}=1_{X_3}\cup Ce_{2,2}:C_{3,3}^*=X_3\cup_{\overline{f_2}^*}CC_{2,2}^*\to X_3\cup_{\overline{f_2}}CC_{2,2}=C_{3,3},\\
\overline{f_3}^*=\begin{cases}\overline{f_3}\circ e_{3,2}:C_{3,2}^*\to X_4 & n=3\\
\overline{f_3}\circ e_{3,3}:C_{3,3}^*\to X_4 & n\ge 4\end{cases}. 
\end{gather*}
Then $e_{3,s+1}\circ j_{3,s}^*=j_{3,s}\circ e_{3,s}\ (s=1,2)$ and $e_{3,2}\circ g_{3,2}^*\simeq g_{3,2}$. 
When $n=3$, $\{\mathscr{S}_r^*,\overline{f_r}^*,\Omega_r^*\,|\,r=2,3\}$ is 
an $aq\ddot{s}_2$-presentation of $(f_3,f_2\circ f_1,f_0)$ and $\overline{f_3}\circ g_{3,2}\simeq \overline{f_3}^*\circ g_{3,2}^*$ so that $\alpha\in\{f_3,f_2\circ f_1,f_0\}^{(aq\ddot{s}_2)}$. 
Suppose $n\ge 4$. 
Set 
$\mathscr{S}_4^*=\mathscr{S}_3^*(\overline{f_3}^*,\Omega_3^*)$ and 
$\Omega_4^*=\widetilde{\Omega_3^*}$. 
Then $C_{4,s}^*=C_{4,s}\ (s=1,2)$. 
Set \begin{align*}
&e_{4,s}=1_{C_{4,s}}:C_{4,s}^*\to C_{4,s}\ (s=1,2),\\
&e_{4,s+1}=1_{X_4}\cup Ce_{3,s}\\
&\hspace{1cm}:C_{4,s+1}^*=X_4\cup_{\overline{f_3}^{*s}}CC_{3,s}^*\to 
C_{4,s+1}=X_4\cup_{\overline{f_3}^s}CC_{3,s}\ (s=2,3),\\
&\overline{f_4}^*=\begin{cases} \overline{f_4}\circ e_{4,3}:C_{4,3}^*\to X_5 & n=4\\
\overline{f_4}\circ e_{4,4}:C_{4,4}^*\to X_5 & n\ge 5\end{cases}.
\end{align*}
Then $e_{4,s+1}\circ j_{4,s}^*=j_{4,s}\circ e_{4,s}\ (1\le s\le 3)$ and 
$e_{4,3}\circ g_{4,3}^*\simeq g_{4,3}$. 
By repeating the process, we obtain $\{\mathscr{S}_r^*,\overline{f_r}^*,\Omega_r^*\,|\,2\le r\le n\}$ which is an $aq\ddot{s}_2$-presentation of $(f_n,\dots,f_2\circ f_1, f_0)$ such that $e_{n,n-1}\circ g_{n,n-1}^*\simeq g_{n,n-1}$ and 
$\overline{f_n}^*=\overline{f_n}\circ e_{n,n-1}$ so that 
$\overline{f_n}^*\circ g_{n,n-1}^*\simeq \overline{f_n}\circ g_{n,n-1}$. 
This shows $\alpha\in\{f_n,\dots,f_2\circ f_1,f_0\}^{(aq\ddot{s}_2)}$ and completes the proof of Proposition 6.2.3.
\end{proof}

\subsection{Suspension of higher Toda brackets}
Let $\vec{\bm f}=(f_n,\dots,f_1)$. 
Given $\ell\ge 1$,
$\{\Sigma^\ell\vec{\bm f}\,\}^{(\star)}$ can be considered, since $\Sigma^\ell X_i$ is well-pointed for every $i$ 
by Corollary~2.3(2). 
We prove the following which implies (1.5) (cf. \cite[Lemma~2.3D]{G}, \cite[Lemma~2.3]{W2}). 

\begin{thm'}
\begin{enumerate}
\item[\rm(1)] $\Sigma^\ell\{\vec{\bm f}\,\}^{(\star)}\subset (-1)^{\ell n}\{\Sigma^\ell\vec{\bm f}\,\}^{(\star)}$ 
for all $\ell\ge 1$ and $\star$.
\item[\rm(2)] $\Sigma^\ell\{\vec{\bm f}\,\}^{(\star)}\subset \{\Sigma^\ell\vec{\bm f}\,\}^{(\star)}$ for 
all $\ell\ge 1$ and $\star=q,\, qs_2,\, aq,\, aqs_2$. 
\end{enumerate}
\end{thm'}
\begin{proof}
(2) follows from (1) and Corollary 6.2.2(6). 
We are going to prove (1). 
Note that 
$$
\Sigma^\ell\{\vec{\bm f}\,\}^{(\star)}\subset[\Sigma^\ell\Sigma^{n-2}X_1,\Sigma^\ell X_{n+1}],\ 
\{\Sigma^\ell\vec{\bm f}\,\}^{(\star)}\subset[\Sigma^{n-2}\Sigma^\ell X_1,\Sigma^\ell X_{n+1}],
$$
where $\Sigma^\ell\Sigma^{n-2}X_1=\Sigma^{n-2}\Sigma^\ell X_1$ by the identification (2.1). 
Hence (1) is equivalent to
\begin{equation}
\Sigma^\ell\{\vec{\bm f}\,\}^{(\star)}\subset\{\Sigma^\ell\vec{\bm f}\,\}^{(\star)}\circ(1_{X_1}\wedge\tau(\s^{n-2},\s^\ell)),
\end{equation} 
where $\tau(\s^{n-2},\s^\ell):\s^{n-2+\ell}=\s^{n-2}\wedge\s^\ell\to\s^\ell\wedge\s^{n-2}=\s^{n-2+\ell}$ 
is the switching homeomorphism defined in (2.2). 
We prove (6.3.1) when $\ell=1$, because (6.3.1) for $\ell\ge 2$ is obtained by an induction. 
By Theorem 6.2.1(1)-(4), it suffices to prove (6.3.1) for $\star=\ddot{s}_t$, $aq\ddot{s}_2$, $aq$, $q\ddot{s}_2$, $q_2$. 
We prove (6.3.1) for only the case of $\star=\ddot{s}_t$, 
because the cases of $\star=aq\ddot{s}_2$, $aq$, $q\ddot{s}_2$, $q_2$ can be treated similarly or more easily. 
Let $\alpha\in\{\vec{\bm f}\,\}^{(\ddot{s}_t)}$ and $\{\mathscr{S}_r, \overline{f_r},\mathscr{A}_r\,|\,2\le r\le n\}$ 
an $\ddot{s}_t$-presentation of $\vec{\bm f}$ with $\alpha=\overline{f_n}\circ g_{n,n-1}$. 
Set $C^*_{r,1}=\Sigma X_r\ (2\le r\le n)$ and $f^*_r=\Sigma f_r:\Sigma X_r\to \Sigma X_{r+1}\ (1\le r\le n)$. 
We are going to construct an $\ddot{s}_t$-presentation $\{\mathscr{S}_r^*,\overline{f_r^*},\mathscr{A}_r^*\,|\,2\le r\le n\}$  
of $\Sigma\vec{\bm f}$ such that $\Sigma(\overline{f_n}\circ g_{n,n-1})\simeq(\overline{f_n^*}\circ g_{n,n-1}^*)\circ
(1_{X_1}\wedge\tau(\s^{n-2},\s^1))$, where 
\begin{align*}
&\mathscr{S}^*_r=(\Sigma X_{r-1},\Sigma^2 X_{r-2},\dots,\Sigma^{r-1} X_1;
C^*_{r,1},\dots,C^*_{r,r};\\
&\hspace{4cm}g^*_{r,1},\dots,g^*_{r,r-1};j^*_{r,1},\dots,j^*_{r,r-1}),\\
&g^*_{r,1}=f_{r-1}^*,\quad C^*_{r,2}=\Sigma X_r\cup_{f^*_{r-1}}C\Sigma X_{r-1},\quad  j_{r,1}^*=i_{g^*_{r,1}}. 
\end{align*}

Set $\mathscr{S}_2^*=(\Sigma X_1;\Sigma X_2,\Sigma X_2\cup_{f_1^*}C\Sigma X_1;f_1^*;i_{f_1^*})$. 
Then $\mathscr{S}_2^*$ is an iterated mapping cone with a reduced structure $\mathscr{A}_2^*=\{a_{2,1}^*\}$ 
and a reduced quasi-structure $\Omega(\mathscr{A}_2^*)=\{\omega_{2,1}^*\}$, 
where $a_{2,1}^*=1_{C^*_{2,2}}$ and $\omega^*_{2,1}=q'_{f^*_1}$. 
Set 
$e_{2,s}=\begin{cases} 1_{\{*\}} & s=0\\ \psi_{f_1^{s-1}} & s=1,2\end{cases}:C^*_{2,s}\approx\Sigma C_{2,s}$ 
and $\overline{f_2^*}=\Sigma\overline{f_2}\circ e_{2,2}:C_{2,2}^*\to \Sigma X_3$. 
Then $e_{2,1}=1_{\Sigma X_2}$, $\overline{f_2^*}$ is an extension of $f_2^*$ to $C^*_{2,2}$,  and 
\begin{align*}
& a_{2,1}^*=(e^{-1}_{2,1}\cup C(1_{X_1}\wedge\tau(\s^0,\s^1)))\circ(\psi^1_{g_{2,1}})^{-1}\circ\Sigma a_{2,1}\circ e_{2,2},\\
&e_{2,s+1}\circ j_{2,s}^*=\Sigma j_{2,s}\circ e_{2,s}\ (s=0,1),\\ 
&{\overline{f_2^*}}^s=\Sigma\overline{f_2}^s\circ e_{2,s}:C^*_{2,s}\to \Sigma X_3\ (s=0,1,2),\\  
&\omega_{2,1}^*=(1_{X_1}\wedge\tau(\s^0\wedge\s^1,\s^1))\circ\Sigma\omega_{2,1}\circ\psi^1_{j_{2,1}}\circ(e_{2,2}\cup Ce_{2,1})\\
&\hspace{2cm}:C^*_{2,2}\cup_{j^*_{2,1}} CC^*_{2,1}\to \Sigma\Sigma X_1.
\end{align*}

Set $\mathscr{S}_3^*=\mathscr{S}_2^*(\overline{f_2^*},\mathscr{A}_2^*)$ and 
\begin{gather*}
e_{3,s}=\begin{cases} 1_{\{*\}} & s=0\\ \psi^1_{\overline{f_2}^{s-1}}\circ(1_{\Sigma X_3}\cup Ce_{2,s-1}) & s=1,2,3\end{cases}
:C_{3,s}^*\approx \Sigma C_{3,s},\\
\overline{f_3^*}=\begin{cases} \Sigma\overline{f_3}\circ e_{3,2} : C^*_{3,2}\to \Sigma X_4& n=3\\ 
\Sigma\overline{f_3}\circ e_{3,3}:C^*_{3,3}\to \Sigma X_4 & n\ge 4\end{cases}.
\end{gather*}
Then $\mathscr{S}_3^*$ is an iterated mapping cone, $\overline{f_3^*}$ is an extension of $f^*_3$ to 
$C^*_{3,2}$ or $C^*_{3,3}$ according as $n=3$ or $n\ge 4$, and
\begin{gather*}
e_{3,1}=1_{\Sigma X_3},\quad e_{3,s+1}\circ j_{3,s}^*=\Sigma j_{3,s}\circ e_{3,s}\ (s=0,1,2),\\ \overline{f_3^*}^s=\Sigma\overline{f_3}^s\circ e_{3,s}\ (s=1,2),\\
g^*_{3,1}=f_2^*,\quad g^*_{3,2}=(\overline{f_2^*}\cup C1_{C^*_{2,1}})\circ {\omega_{2,1}^*}^{-1}:\Sigma\Sigma X_1\to C^*_{3,2}.
\end{gather*} 
The next relation holds when $r=3$. 
\begin{equation}
g^*_{r,s}\simeq e_{r,s}^{-1}\circ\Sigma g_{r,s}\circ(1_{X_{r-s}}\wedge\tau(\s^{s-1},\s^1))^{-1}:\Sigma^{s-1}\Sigma X_{r-s}\to C^*_{r,s}\ (1\le s<r).
\end{equation}
Indeed  
\begin{align*}
&g^*_{r,s}= (\overline{f_{r-1}^*}^s\cup C1_{C^*_{r-1,s-1}})\circ{\omega^*_{r-1,s-1}}^{-1}\\
&=(\Sigma\overline{f_{r-1}}^s\circ e_{r-1,s}\cup C1_{C^*_{r-1,s-1}})\circ{\omega_{r-1,s-1}^*}^{-1}\\
&=(1_{\Sigma X_r}\cup Ce_{r-1,s-1})^{-1}\circ(\Sigma\overline{f_{r-1}}^s\cup C1_{\Sigma C_{r-1,s-1}})\circ(e_{r-1,s}\cup Ce_{r-1,s-1})\\
&\hspace{1cm}\circ{\omega^*_{r-1,s-1}}^{-1}\\
&\simeq  (1_{\Sigma X_r}\cup Ce_{r-1,s-1})^{-1}\circ(\Sigma\overline{f_{r-1}}^s\cup C1_{\Sigma C_{r-1,s-1}})\circ(e_{r-1,s}\cup Ce_{r-1,s-1})\\
&\hspace{1cm}\circ (e_{r-1,s}\cup Ce_{r-1,s-1})^{-1}\circ(\psi^1_{j_{r-1,s-1}})^{-1}\\
&\hspace{1cm}\circ(\Sigma\omega_{r-1,s-1})^{-1}\circ (1_{X_{r-s}}\wedge\tau(\s^{s-1},\s^1))^{-1}\\
&= (1_{\Sigma X_r}\cup Ce_{r-1,s-1})^{-1}\circ(\Sigma\overline{f_{r-1}}^s\cup C1_{\Sigma C_{r-1,s-1}})\circ(\psi^1_{j_{r-1,s-1}})^{-1}\\
&\hspace{1cm} \circ(\Sigma\omega_{r-1,s-1})^{-1}\circ (1_{X_{r-s}}\wedge\tau(\s^{s-1},\s^1))^{-1}\\
&=(1_{\Sigma X_r}\cup Ce_{r-1,s-1})^{-1}\circ(\psi^1_{\overline{f_{r-1}}^{s-1}})^{-1}\circ \Sigma(\overline{f_{r-1}}^s\cup C1_{C_{r-1,s-1}})\\
&\hspace{1cm}\circ(\Sigma\omega_{r-1,s-1})^{-1}\circ(1_{X_{r-s}}\wedge\tau(\s^{s-1},\s^1))^{-1}\\
&=e_{r,s}^{-1}\circ \Sigma(\overline{f_{r-1}}^s\cup C1_{C_{r-1,s-1}})\circ(\Sigma\omega_{r-1,s-1})^{-1}\circ (1_{X_{r-s}}\wedge\tau(\s^{s-1},\s^1))^{-1}\\
&\simeq e_{r,s}^{-1}\circ\Sigma((\overline{f_{r-1}}^s\cup C1_{C_{r-1,s-1}})\circ\omega_{r-1,s-1}^{-1})\circ(1_{X_{r-s}}\wedge\tau(\s^{s-1},\s^1))^{-1}\\
&=e_{r,s}^{-1}\circ\Sigma g_{r,s}\circ(1_{X_{r-s}}\wedge\tau(\s^{s-1},\s^1))^{-1}.
\end{align*}
Set 
\begin{gather*}
a_{3,s}^*=(e^{-1}_{3,s}\cup C(1_{X_{3-s}}\wedge\tau(\s^{s-1},\s^1)))\circ(\psi^1_{g_{3,s}})^{-1}\circ\Sigma a_{3,s}\circ e_{3,s+1}\\
 : C^*_{3,s+1}\simeq 
C^*_{3,s}\cup_{g^*_{3,s}} C\Sigma^s X_{3-s}\quad (s=1,2).
\end{gather*}
Then $\mathscr{A}_3^*=\{a_{3,1}^*, a_{3,2}^*\}$ is a reduced structure on $\mathscr{S}_3^*$. 
Indeed, $a_{3,1}^*=1_{C^*_{3,2}}$ is obvious and, from the commutative 
diagram below for $r=s=3$, we have $a^*_{3,2}\circ j^*_{3,2}=i_{g^*_{3,2}}$. 
\begin{equation}
\begin{split}
\xymatrix{
C^*_{r,s-1}\ar[d]^-{j^*_{r,s-1}} \ar[r]^-{e_{r,s-1}} & \Sigma C_{r,s-1} \ar[d]^-{\Sigma j_{r,s-1}}\\
C^*_{r,s} \ar[d]^-{a^*_{r,s-1}} \ar[r]^-{e_{r,s}} & \Sigma C_{r,s} \ar[d]^-{\Sigma a_{r,s-1}}\\
C^*_{r,s-1}\cup_{g^*_{r,s-1}}C\Sigma^{s-2}\Sigma X_{r-s+1} & \Sigma(C_{r,s-1}\cup_{g_{r,s-1}}C\Sigma^{s-2}X_{r-s+1})\ar[d]^-{(\psi^1_{g_{r,s-1}})^{-1}}\\
& \Sigma C_{r,s-1}\cup C\Sigma\Sigma^{s-2}X_{r-s+1} \ar[ul]^-{e^{-1}_{r,s-1}\cup 
C(1_{X_{r-s+1}}\wedge\tau(\s^{s-2},\s^1))}
}
\end{split}
\end{equation}
Hence $\mathscr{S}_3^*$ is an iterated mapping cone with a reduced structure $\mathscr{A}_3^*$ and 
a reduced quasi-structure $\Omega(\mathscr{A}_3^*)=\{\omega_{3,s}^*\,|\,s=1,2\}$, where $\omega_{3,s}^*= q'_{g^*_{3,s}}\circ(a^*_{3,s}\cup C1_{C^*_{3,s}})$. 
We are going to prove 
\begin{align*}
\omega^*_{3,s}&=(1_{X_{3-s}}\wedge\tau(\s^{s-1}\wedge\s^1,\s^1))\circ\Sigma\omega_{3,s}\circ\psi^1_{j_{3,s}}\circ(e_{3,s+1}\cup Ce_{3,s})\\
&:C^*_{3,s+1}\cup CC^*_{3,s}\to \Sigma \Sigma^{s-1}\Sigma X_{3-s}\quad(s=1,2),
\end{align*}
that is, we are going to prove that the following diagram is commutative when $r=3$. 
\begin{equation}
\begin{split}
\xymatrix{
C^*_{r,s+1}\cup CC^*_{r,s}\ar[d]_-{a^*_{r,s}\cup C1_{C^*_{r,s}}}\ar[r]^-{e_{r,s+1}\cup Ce_{r,s}}&\Sigma C_{r,s+1}\cup C\Sigma C_{r,s}\ar[d]_-{\psi^1_{j_{r,s}}}\\
(C^*_{r,s}\cup_{g^*_{r,s}}C\Sigma^{s-1}\Sigma X_{r-s})\cup CC^*_{r,s}\ar[d]_-{q'_{g^*_{r,s}}} &\Sigma(C_{r,s+1}\cup CC_{r,s})\ar[d]_-{\Sigma\omega_{r,s}}\\
(\Sigma\Sigma^{s-1})\Sigma X_{r-s}&\Sigma(\Sigma\Sigma^{s-1})X_{r-s}\ar[l]^-{1_{X_{r-s}}\wedge\tau(\s^{s-1}\wedge\s^1,\s^1)}
}
\end{split}
\end{equation}
Recall that 
\begin{align*}
&\omega_{r,s}=q'_{g_{r,s}}\circ(a_{r,s}\cup C1_{C_{r,s}})\\
&:C_{r,s+1}\cup_{j_{r,s}} CC_{r,s}\to( C_{r,s}\cup_{g_{r,s}}C\Sigma^{s-1}X_{r-s})\cup_{i_{g_{r,s}}} CC_{r,s}\to \Sigma\Sigma^{s-1}X_{r-s}.
\end{align*} 
Since $CC^*_{r,s}$ is mapped finally to $*$ by the maps in the last diagram, it suffices to show that the two maps 
from $C^*_{r,s+1}$ to $(\Sigma\Sigma^{s-1})\Sigma X_{r-s}$ are the same, that is, 
$q_{g^*_{r,s}}\circ a^*_{r,s}=(1_{X_{r-s}}\wedge\tau(\s^{s-1}\wedge\s^1,\s^1))\circ\Sigma q_{g_{r,s}}\circ \Sigma a_{r,s}\circ e_{r,s+1}$. 
Since
$$
a^*_{r,s}=(e^{-1}_{r,s}\cup(1_{X_{r-s}}\wedge\tau(\s^{s-1},\s^1))\circ(\psi^1_{g_{r,s}})^{-1}\circ\Sigma a_{r,s}\circ e_{r,s+1}
$$
by the definition, it suffices to show 
\begin{align*}
&q_{g_{r,s}^*}\circ(e^{-1}_{r,s}\cup C(1_{X_{r-s}}\wedge\tau(\s^{s-1},\s^1))\circ(\psi^1_{g_{r,s}})^{-1}\\
&=
(1_{X_{r-s}}\wedge\tau(\s^{s-1}\wedge\s^1,\s^1))\circ\Sigma q_{g_{r,s}}\\
&:\Sigma(C_{r,s}\cup_{g_{r,s}} C\Sigma^{s-1}X_{r-s})\to\Sigma\Sigma^{s-1}\Sigma X_{r-s}.
\end{align*}
The last equality is proved easily. 
Hence (6.3.4) is commutative when $r=3$. 

When $n=3$, $\{\mathscr{S}_r^*,\overline{f_r^*},\mathscr{A}_r^*\,|\,r=2,3\}$ is an $\ddot{s}_t$-presentation of $\Sigma\vec{\bm f}$ and  
\begin{align*}
\overline{f_3^*}\circ g^*_{3,2}&= \Sigma\overline{f_3}\circ e_{3,2}\circ g^*_{3,2}\\
&\simeq \Sigma\overline{f_3}\circ e_{3,2}\circ e_{3,2}^{-1}\circ \Sigma g_{3,2}\circ (1_{X_1}\wedge\tau(\s^1,\s^1))^{-1}\ (\text{by (6.3.2)})\\
&=\Sigma(\overline{f_3}\circ g_{3,2})\circ (1_{X_1}\wedge\tau(\s^1,\s^1))^{-1}. 
\end{align*}
Hence $\Sigma\{\vec{\bm f}\,\}^{(\ddot{s}_t)}\subset\{\Sigma\vec{\bm f}\}^{(\ddot{s}_t)}\circ(1_{X_1}\wedge\tau(\s^1,\s^1))$. 

When $n\ge 4$, we set $\mathscr{S}_4^*=\mathscr{S}^*_3(\overline{f_3^*},\mathscr{A}^*_3)$ and 
\begin{gather*}
e_{4,s}=\begin{cases} 1_{\{*\}} & s=0\\ \psi^1_{\overline{f_3}^{s-1}}\circ (1_{\Sigma X_4}\cup Ce_{3,s-1}) & s=1,2,3,4\end{cases}:C^*_{4,s}\approx \Sigma C_{4,s},\\
\overline{f_4^*}=\begin{cases} \Sigma\overline{f_4}\circ e_{4,3}:C_{4,3}^*\to \Sigma X_5 & n=4\\
 \Sigma\overline{f_4}\circ e_{4,4}:C^*_{4,4}\to \Sigma X_5 & n\ge 5\end{cases}.
\end{gather*}
Then $\mathscr{S}_4^*$ is an iterated mapping cone, $\overline{f_4^*}$ is an extension of $f_4^*$ to 
$C^*_{4,3}$ or $C_{4,4}^*$ according as $n=4$ or $n\ge 5$, and 
\begin{gather*}
e_{4,1}=1_{\Sigma X_4},\quad e_{4,s+1}\circ j^*_{4,s}=\Sigma j_{4,s}\circ e_{4,s}\ (s=0,1,2,3),\\
\overline{f^*_4}^s=\Sigma\overline{f_4}^s\circ e_{4,s}\ (s=1,2,3),\\
g^*_{4,s}=\begin{cases} f_3^* & s=1\\(\overline{f_3^*}^s\cup C1_{C^*_{3,s-1}})\circ{\omega_{3,s-1}^*}^{-1} & s=2,3\end{cases}: 
\Sigma\Sigma^{s-2}\Sigma X_{4-s}\to C^*_{4,s}
\end{gather*}
We can prove 
$$
g^*_{4,s}\simeq e_{4,s}^{-1}\circ\Sigma g_{4,s}\circ (1_{X_{4-s}}\wedge\tau(\s^{s-1},\s^1))^{-1}\ (s=1,2,3)
$$
by the method which was used to prove (6.3.2) for $r=3$.
Set 
\begin{align*}
a_{4,s}^*&=(e_{4,s}^{-1}\cup C(1_{X_{4-s}}\wedge\tau(\s^{s-1},\s^1))\circ(\psi^1_{g_{4,s}})^{-1}\circ\Sigma a_{4,s}\circ e_{4,s+1}\\
&:C^*_{4,s+1}\to C^*_{4,s}\cup_{g^*_{4,s}}C\Sigma^s X_{4-s}\ (s=1,2,3).
\end{align*}
Then the diagrams (6.3.3) and (6.3.4) are commutative for $r=4$, that is, $\mathscr{A}_4^*=\{a^*_{4,s}\,|\,1\le s\le 3\}$ is 
a reduced structure on $\mathscr{S}_4^*$ and, if we set $\Omega(\mathscr{A}_4^*)=\{\omega^*_{4,s}\,|\,1\le s\le 3\}$, then  
\begin{align*}
\omega_{4,s}^*&=q'_{g_{4,s}^*}\circ(a^*_{4,s}\cup C1_{C^*_{4,s}})\\
&=(1_{X_{4-s}}\wedge\tau(\s^{s-1}\wedge\s^1,\s^1))\circ\Sigma\omega_{4,s}\circ\psi^1_{j_{4,s}}\circ(e_{4,s+1}\cup Ce_{4,s}).
\end{align*} 

When $n=4$, $\{\mathscr{S}_r^*,\overline{f_r^*},\mathscr{A}^*_r\,|\,2\le r\le 4\}$ is an $\ddot{s}_t$-presentation of 
$\Sigma\vec{\bm f}$ and 
\begin{align*}
\overline{f_4^*}\circ g^*_{4,3}&\simeq \Sigma\overline{f_4}\circ e_{4,3}\circ e^{-1}_{4,3}\circ\Sigma g_{4,3}\circ(1_{X_1}\wedge\tau(\s^2,\s^1))^{-1} \\
&=\Sigma(\overline{f_4}\circ g_{4,3})\circ(1_{X_1}\wedge\tau(\s^2,s^1))^{-1}.
\end{align*}
Hence $\Sigma\{\vec{\bm f}\,\}^{(\ddot{s}_t)}\subset\{\Sigma\vec{\bm f}\,\}^{(\ddot{s}_t)}\circ(1_{X_1}\wedge\tau(\s^2,\s^1))$. 

By repeating the above process, we have an $\ddot{s}_t$-presentation 
$\{\mathscr{S}_r^*,\overline{f_r^*},\mathscr{A}_r^*|2\le r\le n\}$ of $\Sigma\vec{\bm f}$ 
such that $\overline{f_n^*}\circ g^*_{n,n-1}\simeq \Sigma(\overline{f_n}\circ g_{n,n-1})\circ(1_{X_1}\wedge\tau(\s^{n-2},\s^1))^{-1}$ 
so that $\Sigma\{\vec{\bm f}\,\}^{(\ddot{s}_t)}\subset\{\Sigma\vec{\bm f}\,\}^{(\ddot{s}_t)}\circ(1_{X_1}\wedge\tau(\s^{n-2},\s^1))$. 
This completes the proof of Theorem~6.3.1 for $\star=\ddot{s}_t$. 
 \end{proof}

\subsection{Homotopy invariance of higher Toda brackets}

We prove the following which is (1.6) (cf. \cite[Theorem~3.4]{G} for $\{\vec{\bm f}\,\}^{(q)}$) 
and allows us to use the notation $\{\vec{\bm \alpha}\,\}^{(\star)}$ instead of $\{\vec{\bm f}\,\}^{(\star)}$ 
for every $\star$. 

\begin{thm'}
If $\vec{\bm f}, \vec{\bm f'}\in\mathrm{Rep}(\vec{\bm \alpha})$, then $\{\vec{\bm f}\,\}^{(\star)}=\{\vec{\bm f'}\}^{(\star)}$ for all $\star$. 
\end{thm'}

For $\vec{\bm f}=(f_n,\dots,f_1)\in\mathrm{Rep}(\vec{\bm \alpha})$ and $i\in\{1,2,\dots,n\}$, 
let $\vec{\bm f}_i\in\mathrm{Rep}(\vec{\bm \alpha})$ 
denote a sequence obtained from $\vec{\bm f}$ by replacing $f_i$ with $f_i'$ such that $f_i'\simeq f_i$, 
for example $\vec{\bm f}_2=(f_n,\dots,f_3,f_2',f_1)$ with $f_2'\simeq f_2$. 

\begin{lemma'}
If $\vec{\bm f}\in\mathrm{Rep}(\vec{\bm\alpha})$, then $\{\vec{\bm f}\,\}^{(\star)}=\{\vec{\bm f}_i\,\}^{(\star)}$ 
for all $\star$ and $i$. 
\end{lemma'}

From the lemma, the theorem is proved as follows: 
\begin{align*}
\{\vec{\bm f}\,\}^{(\star)}&=\{f_n,\dots,f_2,f_1'\}^{(\star)}
=\{f_n,\dots,f_3,f_2',f_1'\}^{(\star)}=\dots\\
&=\{f_n',\dots,f_1'\}^{(\star)}=\{\vec{\bm f}'\}^{(\star)}.
\end{align*}

\begin{proof}[Proof of Lemma 6.4.2] 
By Theorem 6.2.1, it suffices to prove the lemma for the cases $\star=\ddot{s}_t, aq\ddot{s}_2, aq, q_2$. 
We consider only the case of $\star=\ddot{s}_t$, because other cases can be treated similarly or more easily. 
For simplicity we abbreviate $\{\ \}^{(\ddot{s}_t)}$ as $\{\ \}$. 
Let $\alpha\in\{\vec{\bm f}\,\}$, $\{\mathscr{S}_r,\overline{f_r},\mathscr{A}_r\,|\,2\le r\le n\}$ 
an $\ddot{s}_t$-presentation of $\vec{\bm f}=(f_n,\dots,f_1)$ 
such that $\alpha=\overline{f_n}\circ g_{n,n-1}$, $\mathscr{A}_r=\{a_{r,s}\,|\,1\le s<r\}$ with $a_{r,1}=1_{C_{r,2}}$, 
and $\Omega(\mathscr{A}_r)=\{\omega_{r,s}\,|\,1\le s<r\}$. 
We are going to construct an $\ddot{s}_t$-presentation $\{\mathscr{S}_r',\overline{f_r}',\mathscr{A}_r'\,|\,2\le r\le n\}$ of 
$\vec{\bm f}_i$ with $\overline{f_n}'\circ g_{n,n-1}'=\alpha$. 
If this is done, then $\{\vec{\bm f}\,\}\subset\{\vec{\bm f}_i\}$, and by interchanging $\vec{\bm f}$ with $\vec{\bm f}_i$ 
each other we have $\{\vec{\bm f}_i\}\subset \{\vec{\bm f}\,\}$ so that $\{\vec{\bm f}\,\}=\{\vec{\bm f}_i\}$. 

We divide the proof into three cases: $i=n$; $i=1$; $2\le i\le n-1$. 

First we consider the case: $i=n$. 
Let $\vec{\bm f}_n=(f_n',f_{n-1},\dots,f_1)$ with $J^n:f_n\simeq f_n'$ and set $j=j_{n,n-2}\circ \cdots\circ j_{n,2}\circ j_{n,1}$. 
Since $j$ is a free cofibration, there exists a map $H:C_{n,n-1}\times I\to X_{n+1}$ such that 
$H\circ i_0^{C_{n,n-1}}=\overline{f_n}$ and $H\circ(j\times 1_I)=J^n$. 
Let $\{\mathscr{S}_r',\overline{f_r}',\mathscr{A}_r'\,|\,2\le r\le n\}$ be the 
collection obtained from $\{\mathscr{S}_r,\overline{f_r},\mathscr{A}_r\,|\,2\le r\le n\}$ by replacing 
$\overline{f_n}$ with $H_1$. 
Then the new collection is an $\ddot{s}_t$-presentation of $\vec{\bm f}_n$ such that it represents 
$\alpha=H_1\circ g_{n,n-1}\in\{\vec{\bm f}_n\}$. 
Hence $\{\vec{\bm f}\,\}=\{\vec{\bm f}_n\}$. 

Secondly we consider the case: $i=1$. 
Let $\vec{\bm f}_1=(f_n,\dots,f_2,f_1')$ with $J^1:f_1\simeq f_1'$. 
Set 
\begin{gather*}
\mathscr{S}_2'=(X_1;X_2,X_2\cup_{f_1'}CX_1;f_1';i_{f_1'}),\\
e_{2,2}=\Phi(f_1',f_1,1_{X_1},1_{X_2};-J^1):C_{2,2}'=X_2\cup_{f_1'}CX_1\to C_{2,2}=X_2\cup_{f_1}CX_1, \\
e_{2,1}=1_{X_2},\quad \overline{f_2}'=\overline{f_2}\circ e_{2,2}:C_{2,2}'\to X_3, \quad a_{2,1}'=1_{C_{2,2}'}. 
\end{gather*}
Then $e_{2,2}\circ j_{2,1}'=j_{2,1}$, and 
$\omega_{2,1}'\simeq e_{1_{X_1}}\circ\omega_{2,1}'=\omega_{2,1}\circ(e_{2,2}\cup Ce_{2,1})$ 
by Proposition~3.3(2). 
Set $\mathscr{S}_3'=\mathscr{S}_2'(\overline{f_2}',\mathscr{A}_2')$. 
Then $C_{3,s}'=C_{3,s}$ for $s=1,2$ and $j_{3,1}'=j_{3,1}$. 
Set 
\begin{align*}
e_{3,3}&=1_{X_3}\cup Ce_{2,2}: C_{3,3}'=X_3\cup_{\overline{f_2}'}CC_{2,2}'\to C_{3,3}=X_3\cup_{\overline{f_2}}CC_{2,2},\\
e_{3,s}&=1_{C_{3,s}}\ (s=1,2),\quad  \overline{f_3}'=\begin{cases} \overline{f_3}\circ e_{3,2} & n=3\\ \overline{f_3}\circ e_{3,3} & n\ge 4\end{cases}.
\end{align*}
We have $e_{3,3}\circ j_{3,2}'=j_{3,2}\circ e_{3,2}$, $g_{3,1}'=g_{3,1}$, and 
\begin{align*}
g_{3,2}'&=(\overline{f_2}'\cup C1_{X_2})\circ\omega_{2,1}'^{-1}=(\overline{f_2}\cup C1_{X_2})\circ (e_{2,2}\cup C1_{X_2})\circ\omega_{2,1}'^{-1}\\
&\simeq (\overline{f_2}\cup C1_{X_2})\circ\omega_{2,1}^{-1}=g_{3,2}.
\end{align*}
Take $K^3:g_{3,2}\simeq g_{3,2}'$ and set 
\begin{gather*}
\Phi(K^3)=\Phi(g_{3,2},g_{3,2}',1_{\Sigma X_1},1_{C_{3,2}};K^3):C_{3,2}\cup_{g_{3,2}}C\Sigma X_1\to C_{3,2}'
\cup_{g_{3,2}'}C\Sigma X_1,\\
a_{3,2}'=\Phi(K^3)\circ a_{3,2}\circ e_{3,3}:C_{3,3}'\to C_{3,2}'\cup_{g_{3,2}'}C\Sigma X_1. 
\end{gather*}
Then $\mathscr{A}_3'=\{a_{3,1},a_{3,2}'\}$ is a reduced structure on $\mathscr{S}_3'$. 
We have $\omega_{3,1}'=\omega_{3,1}=\omega_{3,1}\circ(e_{3,2}\cup Ce_{3,1})$ and 
\begin{align*}
\omega_{3,2}'&=q'_{g_{3,2}'}\circ (a_{3,2}'\cup C1_{C_{3,2}'})\\
&=q'_{g_{3,2}'}\circ (\Phi(K^3)\cup C1_{C_{3,2}})\circ(a_{3,2}\cup C1_{C_{3,2}})\circ (e_{3,3}\cup Ce_{3,2})\\
&\simeq q'_{g_{3,2}}\circ(a_{3,2}\cup C1_{C_{3,2}})\circ (e_{3,3}\cup Ce_{3,2})\quad (\text{by Proposition 3.3(2)})\\
&=\omega_{3,2}\circ (e_{3,3}\cup Ce_{3,2}).
\end{align*}
When $n=3$, $\{\mathscr{S}_r',\overline{f_r}',\mathscr{A}_r'\,|\,r=2,3\}$ is an $\ddot{s}_t$-presentation of 
$(f_3,f_2,f_1')$ such that $\overline{f_3}'\circ g_{3,2}'\simeq \overline{f_3}\circ g_{3,2}$ and so 
$\{\vec{\bm f}\,\}=\{\vec{\bm f_1}\,\}$. 
When $n\ge 4$, set $\mathscr{S}_4'=\mathscr{S}_3'(\overline{f_3}',\mathscr{A}_3')$. 
By repeating the above process we obtain $\{\mathscr{S}_r',\overline{f_n}',\mathscr{A}_r'\,|\,2\le r\le n\}$, 
an $\ddot{s}_t$-presentation of $\vec{\bm f_1}$, and $e_{r,s}:C_{r,s}'\simeq C_{r,s}$ such that  
\begin{gather*}
C_{n,s}'=C_{n,s},\ e_{n,s}=1_{C_{n,s}}\ (1\le s\le n-1);\\
a_{n,s}'=a_{n,s},\ \omega_{n,s}'=\omega_{n,s}\ (1\le s\le n-2);\\
\omega_{n,n-1}'\simeq\omega_{n,n-1}\circ (e_{n,n}\cup Ce_{n,n-1});\\
\overline{f_n}'=\overline{f_n}:C_{n,n-1}'=C_{n,n-1}\to X_{n+1}; \quad 
g_{n,n-1}'\simeq g_{n,n-1}
\end{gather*}
so that $\overline{f_n}'\circ g_{n,n-1}'\simeq \overline{f_n}\circ g_{n,n-1}$ and 
hence $\{\vec{\bm f}\,\}=\{\vec{\bm f_1}\,\}$. 

Thirdly we consider the case: $2\le i\le n-1$. 
We prove only the case $i=2$, because other cases can be proved similarly. 
Let $\vec{\bm f}_2=(f_n,\dots,f_3,f_2',f_1)$ with $J^2:f_2\simeq f_2'$. 

Step 1: Set $\mathscr{S}_2'=\mathscr{S}_2$ and $\mathscr{A}_2'=\mathscr{A}_2$. 
Then $C_{2,s}'=C_{2,s}$, $j_{2,1}'=j_{2,1}=i_{f_1}$, and $\omega_{2,1}'=\omega_{2,1}=q_{f_1}'$. 
Set $e_{2,s}=1_{C_{2,s}}$ for $s=1,2$. 
Since $j_{2,1}=i_{f_1}:X_2\to C_{2,2}=X_2\cup_{f_1}CX_1$ is a free cofibration, there exists $H^2:C_{2,2}\times I\to X_3$ 
such that $H^2\circ i_0^{C_{2,2}}=\overline{f_2}$ and $H^2\circ (i_{f_1}\times 1_I)=J^2$. 
Set $\overline{f_2}'=H^2_1:C_{2,2}'=C_{2,2}\to X_3$. 

Step 2: Set 
\begin{align*}
&\mathscr{S}_3'=\mathscr{S}_2'(\overline{f_2}',\mathscr{A}_2'),\quad a_{3,1}'=1_{C_{3,2}'},\\
&e_{3,3}=\Phi(\overline{f_2}',\overline{f_2},1_{C_{2,2}},1_{X_3};-H^2):C_{3,3}'=X_3\cup_{\overline{f_2}'}CC_{2,2}'
\to \\
&\hspace{3cm}C_{3,3}=X_3\cup_{\overline{f_2}}CC_{2,2},\\
&e_{3,2}=\Phi(f_2',f_2,1_{X_2},1_{X_3};-J^2):C_{3,2}'=X_3\cup_{f_2'}CX_2\to C_{3,2}=X_3\cup_{f_2}CX_2,\\
&e_{3,1}=1_{X_3}:C_{3,1}'\to C_{3,1},\quad 
\overline{f_3}'=\begin{cases} \overline{f_3}\circ e_{3,2}:C_{3,2}'\to X_4 & n=3\\ \overline{f_3}\circ e_{3,3}:C_{3,3}'\to X_4 & n\ge 4\end{cases}.
\end{align*}
Then $e_{3,3}\circ j_{3,2}'=j_{3,2}\circ e_{3,2}$, $e_{3,2}\circ j_{3,1}'=j_{3,1}\circ e_{3,1}$, and $e_{3,1}\circ g_{3,1}'=f_2'\simeq f_2=g_{3,1}$. 
We will prove 
$$
e_{3,2}\circ g_{3,2}'\simeq g_{3,2}\quad  i.e.\quad 
e_{3,2}\circ(\overline{f_2}'\cup C1_{X_2})\circ\omega_{2,1}'^{-1}\simeq (\overline{f_2}\cup C1_{X_2})\circ\omega_{2,1}^{-1}.
$$
It suffices to prove $e_{3,2}\circ(\overline{f_2}'\cup C1_{X_2})\simeq \overline{f_2}\cup C1_{X_2}$, 
since $\omega_{2,1}'=\omega_{2,1}$. 
We have
\begin{align*}
&e_{3,2}\circ (\overline{f_2}'\cup C1_{X_2})\\
&\simeq\Phi (f_2',f_2,1_{X_2},1_{X_3};-J^2)\circ \Phi (i_{f_1},f_2',1_{X_2},\overline{f_2}';1_{f_2'})\ (\text{by Proposition 3.3(3)})\\
&\simeq\Phi (i_{f_1},f_2,1_{X_2},\overline{f_2}';((-J^2)\bar\circ 1_{1_{X_2}})\bullet(1_{1_{X_3}}\bar\circ 1_{f_2'}))\ 
(\text{by Proposition 3.3(1)(d)})\\
&\simeq \Phi(i_{f_1},f_2,1_{X_2},\overline{f_2}';-J^2)\\
&\hspace{2cm} (\text{by Proposition 3.3(5) 
and $((-J^2)\bar\circ 1_{1_{X_2}})\bullet(1_{1_{X_3}}\bar\circ 1_{f_2'})\overset{X_2}{\simeq}-J^2$}).
\end{align*}
We define $\widetilde{J}^t: X_2\times I\to X_3$ for $t\in I$ by $\widetilde{J}^t(x_2,u)=(-J^2)(x_2,t+u-tu)$. 
Then $\widetilde{J}^t:(-H^2)_t\circ i_{f_1}\simeq f_2\circ 1_{X_2}$ and so 
\begin{align*}
\Phi(i_{f_1},f_2,1_{X_2},\overline{f_2}';-J^2)&\simeq\Phi(i_{f_1},f_2,1_{X_2},\overline{f_2};1_{f_2})\ (\text{by Proposition 3.3(4)})\\
&\simeq \overline{f_2}\cup C1_{X_2}\ (\text{by Proposition 3.3(3)}).
\end{align*}
Therefore $e_{3,2}\circ(\overline{f_2}'\cup C1_{X_2})\simeq \overline{f_2}\cup C1_{X_2}$ as desired. 

Let $e_{3,2}^{-1}:C_{3,2}\to C_{3,2}'$ be a homotopy inverse of $e_{3,2}$. 
Take $N:e_{3,2}^{-1}\circ e_{3,2}\simeq 1_{C_{3,2}'}$ and set $L=1_{i_{g_{3,2}'}}\bar\circ N:i_{g_{3,2}'}\circ e_{3,2}^{-1}\circ e_{3,2}\simeq i_{g_{3,2}'}$. 
Take $K^{3,2}:e^{-1}_{3,2}\circ g_{3,2}\simeq g_{3,2}'$ and set 
$\Phi(K^{3,2})=\Phi(g_{3,2},g_{3,2}',1_{\Sigma X_1},e^{-1}_{3,2};K^{3,2})$ and 
$$
a_{3,2}''=\Phi(K^{3,2})\circ a_{3,2}\circ e_{3,3}
:C_{3,3}'\to C_{3,2}'\cup_{g_{3,2}'}C\Sigma X_1.
$$
Then $L:a_{3,2}''\circ j_{3,2}'=i_{g_{3,2}'}\circ e_{3,2}^{-1}\circ e_{3,2}\simeq i_{g_{3,2}'}$. 
Since $j_{3,2}'$ is a free cofibration, there exists a homotopy $M:C_{3,3}'\times I\to C_{3,2}'\cup_{g_{3,2}'}C\Sigma X_1$ 
such that $M_0=a_{3,2}''$ and $M\circ (j_{3,2}'\times 1_I)=L$. 
Set $a_{3,2}'=M_1$, $\mathscr{A}_3'=\{1_{C_{3,2}'}, a_{3,2}'\}$, and $\Omega_3'=\Omega(\mathscr{A}_3')$. 
Then $\mathscr{A}_3'$ is a reduced structure on $\mathscr{S}_3'$. 
We will prove
$$
\omega_{3,s}'\simeq\omega_{3,s}\circ(e_{3,s+1}\cup Ce_{3,s})\ (s=1,2).
$$
Since $\omega_{3,1}\circ(e_{3,2}\cup Ce_{3,1})=e_{1_{X_1}}\circ\omega_{3,1}'\simeq\omega_{3,1}'$ by Proposition 3.3(2), 
the case of $s=1$ holds. 
By the definition, we have $M_t\circ j_{3,2}'=L_t=i_{g_{3,2}'}\circ N_t$ for every $t\in I$. 
Set $H^t=1_{M_t\circ j_{3,2}'}:M_t\circ j_{3,2}'=i_{g_{3,2}'}\circ N_t$. 
Since the function $C_{3,2}'\times I\times I\to C_{3,2}'\cup_{g_{3,2}'} C\Sigma X_1,\ (z,s,t)\mapsto H^t(z,s)=M_t(j_{3,2}'(z))$, 
is continuous, it follows from Proposition 3.3(4) that 
$\Phi(j_{3,2}',i_{g_{3,2}'},N_0,M_0;H^0)\simeq \Phi(j_{3,2}',i_{g_{3,2}'},N_1,M_1;H^1)$. 
By Proposition 3.3(3), we have 
\begin{gather*}
\Phi(j_{3,2}',i_{g_{3,2}'},N_0,M_0;H^0)\simeq M_0\cup CN_0=a_{3,2}''\cup C(e_{3,2}^{-1}\circ e_{3,2}),\\
\Phi(j_{3,2}',i_{g_{3,2}'},N_1,M_1;H^1)\simeq M_1\cup CN_1=a_{3,2}'\cup C1_{C_{3,2}'}
\end{gather*}
and so 
\begin{align*}
\omega_{3,2}'&=q'_{g_{3,2}'}\circ (a_{3,2}'\cup C1_{C_{3,2}'})\simeq q'_{g_{3,2}'}\circ (a_{3,2}''\cup C(e_{3,2}^{-1}\circ e_{3,2}))\\
&=q'_{g_{3,2}'}\circ(\Phi(K^{3,2})\cup Ce_{3,2}^{-1})\circ (a_{3,2}\cup C1_{C_{3,2}})\circ(e_{3,3}\cup Ce_{3,2})\\
&\simeq q'_{g_{3,2}}\circ (a_{3,2}\circ C1_{C_{3,2}})\circ (e_{3,3}\cup Ce_{3,2})\quad(\text{by Proposition 3.3(2)})\\
&=\omega_{3,2}\circ (e_{3,3}\cup Ce_{3,2}).
\end{align*}

Step 3: When $n=3$, $\{\mathscr{S}_r',\overline{f_r}',\mathscr{A}_r'\,|\,r=2,3\}$ is 
an $\ddot{s}_t$-presentation of $(f_3,f_2',f_1)$ such that 
$\overline{f_3}'\circ g_{3,2}'=\overline{f}_3\circ e_{3,2}\circ g_{3,2}'\simeq \overline{f_3}\circ g_{3,2}$ 
so that $\{\vec{\bm f}\,\}=\{\vec{\bm f_2}\,\}$. 
When $n\ge 4$, by repeating the above process, we have 
$\mathscr{S}_r'$, $\overline{f_r}'$, $\mathscr{A}_r'$, 
and $e_{r,s}:C_{r,s}'\simeq C_{r,s}$ for $4\le r\le n$ such that 
$\mathscr{A}_r'$ is a reduced structure on $\mathscr{S}_r'$ and 
$$
\left\{\begin{array}{@{\hspace{0.6mm}}l}
\mathscr{S}_r'=\mathscr{S}_{r-1}'(\overline{f_{r-1}}',\mathscr{A}_{r-1}'),\\ 
e_{r,1}=1_{X_r},\ 
e_{r,s}=1_{X_r}\cup Ce_{r-1,s-1}:C_{r,s}'\simeq C_{r,s}\ (2\le s\le r),\\
e_{r,s+1}\circ j_{r,s}'=j_{r,s}\circ e_{r,s},\ \omega_{r,s}'\simeq \omega_{r,s}\circ(e_{r,s+1}\cup Ce_{r,s}),\\
\hspace{2cm} e_{r,s}\circ g_{r,s}'\simeq g_{r,s}\ (1\le s<r),\\
\overline{f_r}'=\begin{cases} \overline{f_n}\circ e_{n,n-1}:C_{n,n-1}'\to X_{n+1} & r=n
\\ \overline{f_r}\circ e_{r,r}:C_{r,r}'\to X_{r+1} & r<n\end{cases}.
\end{array}\right.
$$
Then $\{\mathscr{S}_r',\overline{f_r}', \mathscr{A}_r'\,|\,2\le r\le n\}$ is an $\ddot{s}_t$-presentation 
of $\vec{{\bm f}_2}$ such that 
$\overline{f_n}'\circ g_{n,n-1}'=\overline{f_n}\circ e_{n,n-1}\circ g_{n,n-1}'\simeq \overline{f_n}\circ g_{n,n-1}$. 
Therefore $\{\vec{\bm f}\}=\{\vec{{\bm f}_2}\}$. 
\end{proof}

\subsection{Relations with the J. Cohen's higher Toda bracket}

Let $\langle\vec{\bm f}\,\rangle$ be the Cohen's $n$-fold bracket of $\vec{\bm f}$ which shall be recalled 
and denoted by $\langle\vec{\bm f}\,\rangle^w$ in Appendix~B. 
The purpose of this subsection is to prove the following which is (1.7). 

\begin{thm'}
$\{\vec{\bm f}\,\}^{(aq\ddot{s}_2)}\cup\{\vec{\bm f}\,\}^{(\ddot{s}_t)}\subset\langle \vec{\bm f}\,\rangle$. 
\end{thm'}

Let $\alpha\in \{\vec{\bm f}\,\}^{(aq\ddot{s}_2)}\cup\{\vec{\bm f}\,\}^{(\ddot{s}_t)}$. 
Let $\{\mathscr{S}_r,\overline{f_r},\Omega_r|2\le r\le n\}$ and 
$\{\mathscr{S}_r,\overline{f_r},\mathscr{A}_r\, |\,2\le r\le n\}$ be 
an $aq\ddot{s}_2$-presentation of $\vec{\bm f}$ and an $\ddot{s}_t$-presentation of $\vec{\bm f}$ 
with $\alpha=\overline{f_n}\circ g_{n,n-1}$ according as $\alpha\in\{\vec{\bm f}\,\}^{(aq\ddot{s}_2)}$ 
or $\alpha\in\{\vec{\bm f}\,\}^{(\ddot{s}_t)}$. 
We prepare two lemmas. 

\begin{lemma'}
$C_{n,n-1}$ is a finitely filtered space of type $(f_{n-1},\dots,f_2)$ (see Appendix B for the definition).
\end{lemma'}
\begin{proof}
We can assume that the free cofibration $j_{n,s} : C_{n,s}\to C_{n,s+1}$ is an inclusion map.  
Then by setting $F_k=C_{n,k}\,(0\le k\le n-1)$, where $C_{n,0}=\{*\}$, we obtain the filtration of $C_{n,n-1}$: 
$F_0=\{*\}\subset F_1\subset F_2\subset \cdots\subset F_{n-1}=C_{n,n-1}$. 
By Proposition~6.1.4 and Lemma B.5, we have the canonical homeomorphism 
$$
F_{k+1}/F_k=C_{n,k+1}/C_{n,k}\approx \Sigma^k X_{n-k}\quad (0\le k\le n-2).
$$
By this homeomorphism, we identify $F_{k+1}/F_k$ with $\Sigma^kX_{n-k}$. 
We set 
$$
g_k=(-1)^k 1_{\Sigma^kX_{n-k}}: \Sigma^kX_{n-k}\to C_{n,k+1}/C_{n,k}=\Sigma^kX_{n-k}.
$$ 
The assertion we must prove is that the skew pentagon of the following diagram is homotopy commutative for $1\le k\le n-2$.   
$$
\xymatrix{
\Sigma^kX_{n-k+1} \ar@/_6mm/[dd]_{\Sigma g_{k-1}} &  \Sigma^kX_{n-k} \ar[d]_-{g_k} \ar@/_/[l]_-{\Sigma^k f_{n-k}} &   \\
\Sigma F_k \ar[d]^-{\Sigma(/F_{k-1})}  & F_{k+1}/F_k \ar@{=}[d] \ar[l]_-\delta & F_{k+1} \ar[d]_-{/F_{k-1}} \ar[l]_-q 
& F_k \ar[d]_-{/F_{k-1}} \ar[l]_-{j_{n,k}}  \\
\Sigma^k X_{n-k+1}  & \Sigma^kX_{n-k} \ar@/^/[l]^-{-\Sigma^kf_{n-k}} & \Sigma^{k-1}C_{f_{n-k}}  
\ar@/^/[l]^-{\Sigma^{k-1}q_{f_{n-k}}} & \Sigma^{k-1}X_{n-k+1} \ar@/^/[l]^-{\Sigma^{k-1}i_{f_{n-k}}} 
}
$$
Since lower two rows are cofibre sequences and three squares are homotopy commutative, we have 
$\Sigma(/F_{k-1})\circ\delta\circ g_k\simeq (-\Sigma^k f_{n-k})\circ g_k=(-1)^{k+1}\Sigma^k f_{n-k}
=(-1)^{k-1}\Sigma^k f_{n-k}=\Sigma g_{k-1}\circ \Sigma^k f_{n-k}$. 
Hence the skew pentagon of the above diagram is homotopy commutative. 
This proves Lemma~6.5.2. 
\end{proof}

\begin{lemma'}
The next square is homotopy commutative for $2\le r\le n$, where $q_r$ is the quotient. 
$$
\xymatrix{
\Sigma^{r-2}X_1 \ar[d]_-{\Sigma^{r-2}f_1} \ar[r]^-{g_{r,r-1}} & C_{r,r-1} \ar[d]^-{q_r=/C_{r,r-2}}\\
\Sigma^{r-2}X_2 \ar[r]^-{(-1)^r} & \Sigma^{r-2}X_2=C_{r,r-1}/C_{r,r-2}
}
$$
\end{lemma'}
\begin{proof}
For $r=2$, the square is commutative. 
We use an induction on $r\ge 3$. 
The next diagram is homotopy commutative and 
$g_{3,2}\simeq (\overline{f_2}\cup C1_{X_2})\circ\omega_{2,1}^{-1}$. 
$$
\xymatrix{
&& \Sigma X_1 \ar[r]^-{-\Sigma f_1}  & \Sigma X_2 \ar@/^3cm/[ddd]^-=  \\
 X_2 \ar[r]^-{i_{f_1}} \ar@{=}[d] & C_{f_1} \ar[ur]^-{q_{f_1}} \ar[r]_-{i_{i_{f_1}}} \ar[d]_-{\overline{f_2}} & 
C_{f_1}\cup CX_2 \ar[u]^-{\omega_{2,1}}_-\simeq \ar[r]^-{i_{i_{i_{f_1}}}} \ar[d]_-{\overline{f_2}\cup C1_{X_2}} & 
(C_{f_1}\cup CX_2)\cup CC_{f_1} \ar[u]^-{q_{i_{f_1}}'}_-\simeq \ar[d]_-{(\overline{f_2}\cup C1_{X_2})\cup C\overline{f_2}} \\
 X_2 \ar[r]^-{f_2} & X_3 \ar[r]^-{i_{f_2}} & X_3\cup_{f_2}CX_2 \ar[r]^-{i_{i_{f_2}}} \ar[dr]_-{q_{f_2}} & 
(X_3\cup CX_2)\cup CX_3 \ar[d]_-{q_{f_2}'}^-\simeq \\
&&& \Sigma X_2 
}
$$
Then $q_{f_2}\circ g_{3,2}\circ\omega_{2,1}\simeq q_{f_2}\circ(\overline{f_2}\cup C1_{X_2})
\simeq (-\Sigma f_1)\circ\omega_{2,1}$ and so 
$q_{f_2}\circ g_{3,2}\simeq -\Sigma f_1$. 
This proves the assertion for $r=3$. 
Suppose that the assertion is true for some $r$ with $3\le r<n$. 

First we consider the case of $\ddot{s}_t$-presentation. 
Set $Y=C_{r,r-1}\cup_{g_{r,r-1}} C\Sigma^{r-2}X_1$ and consider the following diagram. 
$$
\xymatrix{
\Sigma^{r-1}X_1 \ar[r]^-{-\Sigma g_{r,r-1}} & \Sigma C_{r,r-1} \ar@/^8mm/[ddr]^-=& &\\
Y\cup_{i_{g_{r,r-1}}} CC_{r,r-1} \ar[u]_-{q_{g_{r,r-1}}'}^-\simeq \ar[r]^-{i_{i_{i_{g_{r,r-1}}}}}  
& (Y\cup CC_{r,r-1})\cup CY \ar[u]_-{q_{i_{i_{g_{r,r-1}}}}'}^-\simeq &\\
C_{r,r}\cup_{j_{r,r-1}} CC_{r,r-1} \ar[u]_-{a_{r,r-1}\cup C1_{C_{r,r-1}}}^-\simeq  \ar[r]^-{i_{i_{j_{r,r-1}}}} 
\ar[d]^-{\overline{f_r}\cup C1_{C_{r,r-1}}}
& (C_{r,r}\cup CC_{r,r-1})\cup CC_{r,r} \ar[u]_-{(a_{r,r-1}\cup C1_{C_{r,r-1}})\cup Ca_{r,r-1}}^-\simeq 
\ar[r]^-{q_{j_{r,r-1}}'}_-\simeq \ar[d]^-{(\overline{f_r}\cup C1_{C_{r,r-1}})\cup C\overline{f_r}}
& \Sigma C_{r,r-1} \ar@{=}[d]\\
X_{r+1}\cup_{\overline{f_r}^{r-1}} CC_{r,r-1} \ar[r]^-{i_{i_{\overline{f_r}^{r-1}}}} \ar@{=}[d]
& (X_{r+1}\cup CC_{r,r-1})\cup CX_{r+1} \ar[r]^-{q_{\overline{f_r}^{r-1}}'}_-\simeq & 
\Sigma C_{r,r-1} \ar[d]_-{\Sigma q_r} \\
C_{r+1,r} \ar[rr]^-{q_{r+1}=/C_{r+1,r-1}} & & \Sigma^{r-1}X_2
}
$$
The diagram is commutative except the first square which is homotopy commutative. 
It follows from definitions that $\omega_{r,r-1}=q_{g_{r,r-1}}'\circ(a_{r,r-1}\cup C1_{C_{r,r-1}})$ and 
$g_{r+1,r}\circ \omega_{r,r-1}\simeq \overline{f_r}\cup C1_{C_{r,r-1}}$ so that 
\begin{align*}
(\ /C_{r+1,r-1})\circ & g_{r+1,r}\circ\omega_{r,r-1}\simeq (\ /C_{r+1,r-1})\circ(\overline{f_r}\cup C1_{C_{r,r-1}})
=\Sigma q_r\circ q_{j_{r,r-1}}\\
&\simeq \Sigma q_r\circ (-\Sigma g_{r,r-1})\circ\omega_{r,r-1}\\
&\simeq (-1)^{r+1} \Sigma^{r-1}f_1\circ\omega_{r,r-1}\ (\text{by the inductive assumption}).
\end{align*}
Hence $(\ /C_{r+1,r-1})\circ g_{r+1,r}\simeq (-1)^{r+1}\Sigma^{r-1}f_1$. 
This completes the induction and proves Lemma 6.5.3 for the case of $\ddot{s}_t$-presentation. 

Secondly we consider the case of $aq\ddot{s}_2$-presentation. 
It suffices to prove 
\begin{equation}
q_{r+1}\circ g_{r+1,r}\circ\omega_{r,r-1}\simeq (-1)^{r+1}1_{\Sigma^{r-1}X_2}\circ \Sigma^{r-1}f_1\circ\omega_{r,r-1}.
\end{equation} 
We have 
$$
q_{r+1}\circ g_{r+1,r}\circ\omega_{r,r-1}\simeq q_{r+1}\circ (\overline{f_r}\cup C1_{C_{r,r-1}})
=\Sigma q_r\circ q_{j_{r,r-1}}'\circ i_{i_{j_{r,r-1}}}
=\Sigma q_r\circ q_{j_{r,r-1}}
$$
and 
\begin{align*}
&(-1)^{r+1}1_{\Sigma^{r-1}X_2}\circ \Sigma^{r-1}f_1\circ\omega_{r,r-1}\\
&\simeq \Sigma q_r\circ(-\Sigma g_{r,r-1})\circ\omega_{r,r-1}\quad(\text{by the inductive assumption})\\
&\simeq \Sigma q_r\circ (-\Sigma(\overline{f_{r-1}}\cup C1_{C_{r-1,r-2}})\circ \Sigma\omega_{r-1,r-2}^{-1})\circ\omega_{r,r-1}\\
&=\Sigma q_r\circ (-\Sigma(\overline{f_{r-1}}\cup C1_{C_{r-1,r-2}})\circ \Sigma\omega_{r-1,r-2}^{-1})\circ\widetilde{\omega_{r-1,r-2}}\\
&=\Sigma q_r\circ (-\Sigma(\overline{f_{r-1}}\cup C1_{C_{r-1,r-2}})\circ \Sigma\omega_{r-1,r-2}^{-1})\circ \Sigma\omega_{r-1,r-2}\\
&\hspace{3cm}\circ 
q_{\overline{f_{r-1}}\cup C\overline{f_{r-1}}^{r-2}}\circ\xi\\
&\simeq \Sigma q_r\circ (-\Sigma(\overline{f_{r-1}}\cup C1_{C_{r-1,r-2}}))\circ q_{\overline{f_{r-1}}\cup C\overline{f_{r-1}}^{r-2}}\circ\xi,
\end{align*}
where 
\begin{align*}
\xi:\ &C_{r,r}\cup_{j_{r,r-1}}CC_{r,r-1}\\
&=(X_r\cup_{\overline{f_{r-1}}} CC_{r-1,r-1})
\cup_{1_{X_r}\cup Cj_{r-1,r-2}} C(X_r\cup_{\overline{f_{r-1}}^{r-2}} CC_{r-1,r-2})\\
&\to  (X_r\cup_{1_{X_r}} CX_r)\cup_{\overline{f_{r-1}}\cup C\overline{f_{r-1}}^{r-2}} 
C(C_{r-1,r-1}\cup_{j_{r-1,r-2}}CC_{r-1,r-2})
\end{align*}
is the homeomorphism defined in (5.3). 
Set 
\begin{align*}
M&=\Sigma q_r\circ q_{j_{r,r-1}},\\ 
N&=\Sigma q_r\circ(-\Sigma(\overline{f_{r-1}}\cup C1_{C_{r-1,r-2}}))\circ q_{\overline{f_{r-1}}\cup C\overline{f_{r-1}}^{r-2}}\circ\xi.
\end{align*} 
Then (6.5.1) is equivalent to $M\simeq N$. 
Let $\pi$ denote the composite of 
\begin{align*}
&C_{r,r}\cup_{j_{r,r-1}}CC_{r,r-1}\overset{q_{j_{r,r-1}}}{\longrightarrow} \Sigma C_{r,r-1}
=\Sigma(X_r\cup_{\overline{f_{r-1}}^{r-2}}CC_{r-1,r-2})\\
&\hspace{3cm}\overset{\Sigma q_{\overline{f_{r-1}}^{r-2}}}{\longrightarrow} 
\Sigma\Sigma C_{r-1,r-2}.
\end{align*}
Define $M',N' : \Sigma\Sigma C_{r-1,r-2}\to \Sigma^{r-1}X_2$ by 
$$
M'(y\wedge\overline{t}\wedge\overline{u})=q_{r-1}(y)\wedge\overline{t}\wedge\overline{u},\quad 
N'(y\wedge\overline{t}\wedge\overline{u})=q_{r-1}(y)\wedge\overline{u}\wedge\overline{1-t}.
$$
Since $q_r=\Sigma q_{r-1}\circ q_{\overline{f_{r-1}}^{r-2}}$, we have $M=M'\circ\pi$ and $N=N'\circ\pi$. 
As is easily seen, $M'\simeq N'$ so that we have $M\simeq N$ as desired. 
This completes the induction and the proof of Lemma~6.5.3. 
\end{proof}

\begin{proof}[Proof of Theorem 6.5.1]
Under the notations of \cite{C,OO1} and Lemma 6.5.2, we have
\begin{gather*}
j_{C_{n,n-1}}:X_n=F_1\subset F_{n-1}=C_{n,n-1},\\
 \sigma_{C_{n,n-1}}: C_{n,n-1}=F_{n-1}\overset{/F_{n-2}}{\longrightarrow}
 F_{n-1}/F_{n-2}=\Sigma^{n-2}X_2\overset{(-1)^n}{\longrightarrow}\Sigma^{n-2}X_2 .
\end{gather*}
By Lemma 6.5.3, we have the following homotopy commutative diagram. 
$$
\xymatrix{
& \Sigma^{n-2}X_1 \ar[dl]_-{\Sigma^{n-2}f_1} \ar[d]^-{g_{n,n-1}} &\\
\Sigma^{n-2}X_2 & C_{n,n-1} \ar[l]^-{\sigma_{C_{n,n-1}}} \ar[d]_-{\overline{f_n}} & X_n \ar[l]_-{j_{C_{n,n-1}}} \ar[dl]^-{f_n}\\
& X_{n+1} &
}
$$
Hence $\alpha=\overline{f_n}\circ g_{n,n-1}\in\langle \vec{\bm f}\,\rangle$ and so 
$\{\vec{\bm f}\,\}^{(aq\ddot{s}_2)}\cup\{\vec{\bm f}\,\}^{(\ddot{s}_t)}\subset\langle \vec{\bm f}\,\rangle$. 
This proves Theorem~6.5.1. 
\end{proof}

\subsection{3-fold brackets} 

We denote the classical unstable Toda bracket of $\vec{\bm f}=(f_3,f_2,f_1)$ by $\{\vec{\bm f}\,\}$ or $\{f_3,f_2,f_1\}$ 
(see the end of Second 2). 
We have (1.8) from Theorem 6.6.1 and Example 6.6.2 below.  
 
\begin{thm'}
When $n=3$, we have 
\begin{align*}
\{\vec{\bm f}\,\}^{(\ddot{s}_t)}&=\{\vec{\bm f}\,\}^{(aq\ddot{s}_2)}=\{\vec{\bm f}\,\}^{(q\ddot{s}_2)}=\{\vec{\bm f}\,\}\\
&\subset \{\vec{\bm f}\,\}^{(aqs_2)}=\{\vec{\bm f}\,\}^{(qs_2)}=\{\vec{\bm f}\,\}^{(q\dot{s}_2)}
=\{\vec{\bm f}\,\}^{(aq\ddot{s}_2)}\circ\mathscr{E}(\Sigma X_1)\\
&\subset \{\vec{\bm f}\,\}^{(aq)}=\{\vec{\bm f}\,\}^{(q_2)}=\{\vec{\bm f}\,\}^{(q)},
\end{align*}
so that systems $\{\ \}^{(\star)}$ of unstable higher Toda brackets for $\star=\ddot{s}_t, aq\ddot{s}_2,q\ddot{s}_2$ are normal, 
and there exist $\vec{\bm f}$ and $\vec{\bm f'}$ such that $\{\vec{\bm f}\,\}^{(q)}$ is empty, 
$\langle\vec{\bm f}\,\rangle$ is not empty, and $\{\vec{\bm f'}\}^{(q)}$ is a non-empty proper subset of 
$\langle\vec{\bm f'}\,\rangle$ so that the Cohen's system of unstable higher Toda brackets is not normal.
\end{thm'}
\begin{proof}
The relations 
$
\{\vec{\bm f}\,\}^{(\ddot{s}_t)}=\{\vec{\bm f}\,\}^{(aq\ddot{s}_2)}
=\{\vec{\bm f}\,\}^{(q\ddot{s}_2)}\subset\{\vec{\bm f}\,\}^{(aq)}=\{\vec{\bm f}\,\}^{(q_2)}=\{\vec{\bm f}\,\}^{(q)}$
$\supset \{\vec{\bm f}\,\}^{(qs_2)}
\supset \{\vec{\bm f}\,\}^{(aqs_2)}
\supset \{\vec{\bm f}\,\}^{(aq\ddot{s}_2)}
$ 
hold immediately from the definitions. 
We have $\{\vec{\bm f}\,\}^{(aqs_2)}=\{\vec{\bm f}\,\}^{(aq\ddot{s}_2)}\circ\mathscr{E}(\Sigma X_1)
=\{\vec{\bm f}\,\}^{(qs_2)}=\{\vec{\bm f}\,\}^{(q\dot{s}_2)}$ by Theorem~6.2.1(2). 
The equality $\{\vec{\bm f}\,\}^{(q\ddot{s}_2)}=\{\vec{\bm f}\,\}$ follows easily from (4.2). 

Let $p^1:\s^7\to\s^4=\mathbb{H}P^1$ be the projection, where $\mathbb{H}P^m$ is 
the quaternionic projective $m$-space, and $*^1_3$ the trivial map $\s^4\to\s^3$. 
Set $\vec{\bm f}=(*^1_3,p^1,2\Sigma^3p^1)$. 
Then it follows from \cite[Remark B.5]{OO1} that $\langle \vec{\bm f}\,\rangle$ contains $0$ and 
$\{\vec{\bm f}\,\}$ is empty so that $\{\vec{\bm f}\,\}^{(q)}$ is empty by Corollary 6.2.2(4). 

Set $\vec{\bm f'}=(2\iota_5,\nu_5\eta_8,2\iota_9)$ (see \cite{T} for notations). 
Then, since $\pi_{10}(\s^5)=\bZ_2\{\nu_5\eta_8^2\}$ and $\{\vec{\bm f'}\,\}=\{\nu_5\eta_8^2\}$ by 
\cite[Proposition 5.9,  Theorem 13.4, Corollary 3.7]{T}, we have 
$\{\vec{\bm f'}\,\}^{(q)}=\{\nu_5\eta_8^2\}$ by Theorem~6.2.1(5). 
Also we have $\langle\vec{\bm f'}\,\rangle=\pi_{10}(\s^5)$ by Remark~B.5(1) of \cite{OO1}. 
Hence $\{\vec{\bm f'}\,\}^{(q)}$ is a non-empty proper subset of $\langle\vec{\bm f'}\,\rangle$ 
so that the Cohen's system of unstable higher Toda brackets is not normal. 
This completes the proof of the theorem. 
\end{proof}

\begin{exam'}
We use freely notations and results in \cite{T}. 
\begin{enumerate}
\item If $\ell\ge 8$, then 
$\{\Sigma^\ell\nu_5,8\iota_{\ell+8},\Sigma^{\ell+1}\sigma'\}=(-1)^\ell\zeta_{\ell+5}$,  
$$
\{\Sigma^\ell\nu_5,8\iota_{\ell+8},\Sigma^{\ell+1}\sigma'\}^{(qs_2)}
=\{\zeta_{\ell+5},-\zeta_{\ell+5}\}\subset\pi_{\ell+16}(\s^{\ell+5})=\bZ_8\{\zeta_{\ell+5}\}\oplus\bZ_{63},
$$
and the order of any element of $\{\Sigma^\ell\nu_5,8\iota_{\ell+8},\Sigma^{\ell+1}\sigma'\}^{(q)}$ is a multiple of $8$.  
\item If $\ell\ge 7$, then 
$\{\Sigma^\ell\alpha_1(5), \Sigma^\ell\alpha_1(9),\Sigma^\ell\alpha_1(12)\}=(-1)^\ell\beta_1(\ell+5)$, 
\begin{align*}
\{\Sigma^\ell\alpha_1(5), \Sigma^\ell\alpha_1(9),\Sigma^\ell\alpha_1(12)\}^{(qs_2)}&=\{\beta_1(\ell+5),-\beta_1(\ell+5)\}\\
&\subset\pi_{\ell+15}(\s^{\ell+5})=\bZ_3\{\beta_1(\ell+5)\}\oplus\bZ_2,
\end{align*}
and the order of any element of $\{\Sigma^\ell\alpha_1(5), \Sigma^\ell\alpha_1(9),\Sigma^\ell\alpha_1(12)\}^{(q)}$ 
is a multiple of $3$.  
\end{enumerate}
\end{exam'}

\subsection{A 4-fold bracket} 
When $\vec{\bm \alpha}=(\alpha_4,\alpha_3,\alpha_2,\alpha_1)$ is admissible, 
we have considered the next four non-empty 
subsets of $[\Sigma^2X_1,X_5]$ which are called tertiary compositions \cite{OO1}: 
$
\{\vec{\bm \alpha}\,\}^{(0)}\subset\{\vec{\bm \alpha}\,\}^{(1)}
\subset\{\vec{\bm \alpha}\,\}^{(2)}\subset\{\vec{\bm \alpha}\,\}^{(3)}.
$
Here $\{\vec{\bm \alpha}\,\}^{(1)}=\bigcup_{\vec{\bm f}\in\mathrm{Rep}(\vec{\bm \alpha})}\{\vec{\bm f}\,\}^{(1)}$ 
is the tertiary composition of \^{O}guchi \cite{Og, OO1}, and 
\begin{gather}
\begin{split}
\{\vec{\bm \alpha}\,\}^{(2)}
&=\bigcup_{\vec{\bm A}}\big(\{f_4,[f_3,A_2,f_2],(f_2,A_1,f_1)\}\\
&\hspace{2cm}\cap\{[f_4,A_3,f_3],(f_3,A_2,f_2),-\Sigma f_1\}\big),
\end{split}\\
\{\vec{\bm \alpha}\,\}^{(3)}=\bigcup_{\vec{\bm A}}\{[f_4,A_3,f_3],i_{f_3}\circ[f_3,A_2,f_2], (f_2,A_1,f_1)\},
\end{gather}
where $\vec{\bm f}=(f_4,f_3,f_2,f_1)$ is a representative of $\vec{\bm \alpha}$ and 
unions $\bigcup_{\vec{\bm A}}$ are taken over all $\vec{\bm A}=(A_3,A_2,A_1)$ such that 
$(\vec{\bm f};\vec{\bm A})$ is admissible. 
As remarked in \cite[p.56]{OO1}, the terms on right hand sides of (6.7.1) and (6.7.2) do not depend on the choice 
of a representative $\vec{\bm f}$. 
The following theorem implies (1.9). 

\begin{thm'}
The sequence $\vec{\bm \alpha}=(\alpha_4,\alpha_3,\alpha_2,\alpha_1)$ is admissible if and only 
if it is $\star$-presentable for some and hence all $\star$. 
If $\vec{\bm \alpha}$ is admissible and $\vec{\bm f}=(f_4,f_3,f_2,f_1)\in\mathrm{Rep}(\vec{\bm \alpha})$, then 
\begin{equation}
\{\vec{\bm \alpha}\,\}^{(2)}\subset \{\vec{\bm \alpha}\,\}^{(\ddot{s}_t)}=\{\vec{\bm f}\,\}^{(\ddot{s}_t)}
=\bigcup\{f_4,[f_3,A_2,f_2],(f_2,A_1,f_1)\}\subset\{\vec{\bm \alpha}\,\}^{(3)},
\end{equation}
where $\bigcup$ is taken over all $\vec{\bm A}=(A_3,A_2,A_1)$ such that $(\vec{\bm f};\vec{\bm A})$ is admissible. 
\end{thm'}

\begin{cor'}[cf.\,Theorem 2.7 of \cite{W2}]
If a map $f_0:X_0\to X_1$ satisfies $\{f_2,f_1,f_0\}=\{0\}$, then 
$
\{f_4,f_3,f_2,f_1\}^{(\ddot{s}_t)}\circ\Sigma^2 f_0\subset f_4\circ\{f_3,f_2,f_1,f_0\}^{(\ddot{s}_t)}. 
$
\end{cor'}

\begin{proof}[Proof of Theorem 6.7.1] 
First we suppose that $\vec{\bm \alpha}$ is admissible. 
Let $\vec{\bm f}\in\mathrm{Rep}(\vec{\bm \alpha})$.  
We are going to show
\begin{equation}
\bigcup\{f_4,[f_3,A_2,f_2],(f_2,A_1,f_1)\}\subset\{\vec{\bm f}\,\}^{(\ddot{s}_t)}
\end{equation}
where $\bigcup$ is taken over all $\vec{\bm A}=(A_3,A_2,A_1)$ such that $(\vec{\bm f};\vec{\bm A})$ is admissible. 

Let $(\vec{\bm f};\vec{\bm A})$ be an admissible representative of $\vec{\bm \alpha}$. 
Then $[f_4,A_3,f_3]\circ(f_3,A_2,f_2)\simeq *$ and $[f_3,A_2,f_2]\circ (f_2,A_1,f_1)\simeq *$, 
and it follows from \cite[Proposition (5.11)]{Og} (or \cite[Lemma 3.6]{OO1}) that 
\begin{align*}
f_4\circ [f_3,A_2,f_2]&=[f_4,A_3,f_3]\circ i_{f_3}\circ [f_3,A_2,f_2]\\
&\simeq [f_4,A_3,f_3]\circ (f_3,A_2,f_2)\circ q_{f_2}\simeq *.
\end{align*}
Hence $\{f_4,[f_3,A_2,f_2],(f_2,A_1,f_1)\}$ is non-empty. 
Take $\alpha$ from it. 
Then there exist 
$F:f_4\circ[f_3,A_2,f_2]\simeq *$ and $B_2:[f_3,A_2,f_2]\circ(f_2,A_1,f_1)\simeq *$ with 
$$
\alpha=[f_4,F,[f_3,A_2,f_2]]\circ([f_3,A_2,f_2],B_2,(f_2,A_1,f_1)).
$$ 
We are going to construct an $\ddot{s}_t$-presentation 
$\{\mathscr{S}_r,\overline{f_r},\mathscr{A}_r\,|\,r=2,3,4\}$ 
of $\vec{\bm f}$ with $\overline{f_4}\circ g_{4,3}=\alpha$. 
If this is done, then $\vec{\bm f}$ is $\ddot{s}_t$-presentable and hence $\star$-presentable for every $\star$ 
by Corollary 6.2.2(4), and (6.7.4) is proved. 

Set 
\begin{align*}
&\mathscr{S}_2=\big(X_1; X_2, X_2\cup_{f_1}CX_1; f_1; i_{f_1}), \ \overline{f_2}=[f_2,A_1,f_1], \ 
\mathscr{A}_2=\{1_{X_2\cup_{f_1}CX_1}\},\\
&\mathscr{S}_3=\mathscr{S}_2(\overline{f_2},\mathscr{A}_2).
\end{align*}
Then $g_{3,2}=(\overline{f_2}\cup C1_{X_2})\circ q_{f_1}'^{-1}\simeq (f_2,A_1,f_1)$ by (4.2). 
Since $j_{3,2}$ is a homotopy cofibre of $g_{3,2}$, there exists $a_{3,2}:C_{3,3}\simeq C_{3,2}\cup_{g_{3,2}}C\Sigma X_1$ 
such that $a_{3,2}\circ j_{3,2}=i_{g_{3,2}}$.
Set $\mathscr{A}_3=\{1_{C_{3,2}}, a_{3,2}\}$. 
Take $H:g_{3,2}\simeq (f_2,A_1,f_1)$ and set 
\begin{align*}
&\Phi(H)=\Phi(g_{3,2},(f_2,A_1,f_1),1_{\Sigma X_1},1_{C_{3,2}};H)\\
&\qquad :C_{3,2}\cup_{g_{3,2}}C\Sigma X_1\to C_{3,2}\cup_{(f_2,A_1,f_1)}C\Sigma X_1,\\
&\overline{f_3}=[[f_3,A_2,f_2],B_2,(f_2,A_1,f_1)]\circ\Phi(H)\circ a_{3,2}:C_{3,3}\to X_4,\\
&\mathscr{S}_4=\mathscr{S}_3(\overline{f_3},\mathscr{A}_3).
\end{align*} 
Since $\overline{f_3}^2=\overline{f_3}\circ j_{3,2}=[f_3,A_2,f_2]$, we can set 
$$
\overline{f_4}=[f_4,F,[f_3,A_2,f_2]]:C_{4,3}=X_4\cup_{\overline{f_3}^2}CC_{3,2}\to X_5.
$$  
Let $\mathscr{A}_4$ be any reduced structure on $\mathscr{S}_4$. 
Then $\{\mathscr{S}_r,\overline{f_r},\mathscr{A}_r\,|\,r=2,3,4\}$ is an $\ddot{s}_t$-presentation of $\vec{\bm f}$ and  
\begin{align*}
&g_{4,3}=(\overline{f_3}\cup C1_{C_{3,2}})\circ \omega_{3,2}^{-1}\\
&\simeq ([[f_3,A_2,f_2],B_2,(f_2,A_1,f_1)]\cup C1_{C_{3,2}})\circ(\Phi(H)\cup C1_{C_{3,2}})\\
&\hspace{7cm}\circ (a_{3,2}\cup C1_{C_{3,2}})\circ\omega_{3,2}^{-1}\\
&\simeq ([[f_3,A_2,f_2],B_2,(f_2,A_1,f_1)]\cup C1_{C_{3,2}})\circ(\Phi(H)\cup C1_{C_{3,2}})\circ q_{g_{3,2}}'^{-1}\\
&\simeq ([[f_3,A_2,f_2],B_2,(f_2,A_1,f_1)]\cup C1_{C_{3,2}})\circ q_{(f_2,A_1,f_1)}'^{-1}\\ &\hspace{6cm}(\text{by Proposition 3.3(2)})\\
&\simeq ([f_3,A_2,f_2],B_2,(f_2,A_1,f_1))\quad (\text{by (4.2)}).
\end{align*}
Hence $\overline{f_4}\circ g_{4,3}\simeq [f_4,F,[f_3,A_2,f_2]]\circ ([f_3,A_2,f_2],B_2,(f_2,A_1,f_1)$. 
Thus (6.7.4) is proved. 

Secondly suppose that $\vec{\bm \alpha}$ is $\ddot{s}_t$-presentable. 
Then it has an $\ddot{s}_t$-presentable representative  $\vec{\bm f}=(f_4,f_3,f_2,f_1)$. 
We are going to show that $\vec{\bm f}$ and $\vec{\bm \alpha}$ are admissible, and 
\begin{equation}
\bigcup\{f_4,[f_3,A_2,f_2],(f_2,A_1,f_1)\}\supset\{\vec{\bm f}\,\}^{(\ddot{s}_t)},
\end{equation}
where $\bigcup$ is taken over all $\vec{\bm A}=(A_3,A_2,A_1)$ such that $(\vec{\bm f};\vec{\bm A})$ is admissible.
If this is done, then 
\begin{equation}
\{\vec{\bm f}\,\}^{(\ddot{s}_t)}=\bigcup\{f_4,[f_3,A_2,f_2],(f_2,A_1,f_1)\}
\end{equation}
by (6.7.4) and (6.7.5). 

Let $\alpha\in\{\vec{\bm f}\,\}^{(\ddot{s}_t)}$ and $\{\mathscr{S}_r,\,  \overline{f_r},\,\mathscr{A}_r\,|\, r=2,3,4\}$ 
an $\ddot{s}_t$-presentation of $\vec{\bm f}$ with $\alpha=\overline{f_4}\circ g_{4,3}$. 
Take $A_1:f_2\circ f_1\simeq *$ and $A_2:f_3\circ f_2\simeq *$ such that 
$\overline{f_2}=[f_2,A_1,f_1]$ and $\overline{f_3}^2=[f_3,A_2,f_2]$. 
We have $g_{3,2}=(\overline{f_2}\cup C1_{X_2})\circ q_{f_1}'^{-1}\simeq (f_2,A_1,f_1)$ and 
$g_{4,2}=(\overline{f_3}^2\cup C1_{X_3})\circ q_{f_2}'^{-1}\simeq (f_3,A_2,f_2)$ by (4.2). 
Since $j_{3,2}$ is a homotopy cofibre of $g_{3,2}$ and $\overline{f_3}$ is an extension of $\overline{f_3}^2$ on $C_{3,3}$, 
we have $\overline{f_3}^2\circ g_{3,2}\simeq *$ by Lemma 4.3(7). 
Hence 
$$
[f_3,A_2,f_2]\circ (f_2,A_1,f_1)\simeq *.
$$
Since $\overline{f_4}^2:X_4\cup_{f_3}CX_3\to X_5$ is an extension of $f_4$, there exists $A_3:f_4\circ f_3\simeq *$ 
such that $\overline{f_4}^2=[f_4,A_3,f_3]$. 
Since $j_{4,2}$ is a homotopy cofibre of $g_{4,2}$ and $\overline{f_4}$ is an extension of $\overline{f_4}^2$ on $C_{4,3}$, 
we have $\overline{f_4}^2\circ g_{4,2}\simeq *$ by Lemma 4.3(7), that is, 
$$
[f_4,A_3,f_3]\circ (f_3,A_2,f_2)\simeq *.
$$
Thus $(\vec{\bm f};\vec{\bm A})$ is admissible. 
Since $\overline{f_4}$ is an extension of $f_4$ on $X_4\cup_{\overline{f_3}^2}CC_{3,2}$, 
there exists a homotopy $D:f_4\circ \overline{f_3}^2\simeq *$ such that $\overline{f_4}=[f_4,D,\overline{f_3}^2]$. 
Let $B:\overline{f_3}^2\circ g_{3,2}\simeq *$ be a homotopy such that $\overline{f_3}\circ a_{3,2}^{-1}=[\overline{f_3}^2,B,g_{3,2}]$.  
Then 
\begin{align*}
g_{4,3}&=(\overline{f_3}\cup C1_{C_{3,2}})\circ\omega_{3,2}^{-1}\\
&\simeq (\overline{f_3}\cup C1_{C_{3,2}})\circ(a_{3,2}^{-1}\cup C1_{C_{3,2}})\circ(a_{3,2}\cup C1_{C_{3,2}})\circ\omega_{3,2}^{-1}\\
&=(\overline{f_3}\circ a_{3,2}^{-1}\circ a_{3,2}\cup C1_{C_{3,2}})\circ\omega_{3,2}^{-1}
=([\overline{f_3}^2,B,g_{3,2}]\circ a_{3,2}\cup C1_{C_{3,2}})\circ\omega_{3,2}^{-1}\\
&\simeq (\overline{f_3}^2,B,g_{3,2})\quad(\text{by (4.2)}).
\end{align*}
Hence we have 
\begin{align*}
\alpha&=\overline{f_4}\circ g_{4,3}=[f_4,D,\overline{f_3}^2]\circ (\overline{f_3}^2,B,g_{3,2})\\
&\in\{f_4,\overline{f_3}^2,g_{3,2}\}=\{f_4,[f_3,A_2,f_2],(f_2,A_1,f_1)\}.
\end{align*}
This proves (6.7.5) and (6.7.6) holds. 

Therefore $\vec{\bm \alpha}$ is admissible if and only if it is $\ddot{s}_t$-presentable and 
hence $\star$-presentable for every $\star$. 
Also if $\vec{\bm \alpha}$ is admissible, then (6.7.6) holds for every representative $\vec{\bm f}$ of $\vec{\bm \alpha}$.  

In the rest of the proof we suppose that $\vec{\bm \alpha}$ is admissible and $\vec{\bm f}\in\mathrm{Rep}(\vec{\bm \alpha})$. 
Then $\{\vec{\bm \alpha}\,\}^{(2)}\subset\{\vec{\bm \alpha}\,\}^{(\ddot{s}_t)}=\{\vec{\bm f}\,\}^{(\ddot{s}_t)}
=\bigcup_{\vec{\bm A}}\{f_4,[f_3,A_2,f_2],(f_2,A_1,f_1)\}$ by (6.7.1), (6.7.6), and Theorem 6.5.1. 
Since $f_4=[f_4,A_3,f_3]\circ i_{f_3}$, it follows from \cite[Proposition~1.2~III)]{T} that 
$$
\{f_4,[f_3,A_2,f_2],(f_2,A_1,f_1)\}\subset\{[f_4,A_3,f_3],i_{f_3}\circ[f_3,A_2,f_2],(f_2,A_1,f_1)\}
$$
so that $\{\vec{\bm f}\,\}^{(\ddot{s}_t)}\subset\{\vec{\bm \alpha}\,\}^{(3)}$ by (6.7.2). 
This completes the proof of Theorem~6.7.1. 
\end{proof}

\begin{proof}[Proof of Corollary 6.7.2] 
By Theorem 6.7.1, we have
$$
\{f_4,f_3,f_2,f_1\}^{(\ddot{s}_t)}\circ\Sigma^2 f_0=\bigcup\big(\{f_4,[f_3,A_2,f_2],(f_2,A_1,f_1)\}\circ\Sigma^2f_0\big),
$$
where $\bigcup$ is taken over all $(A_3,A_2,A_1)$ such that $(f_4,f_3,f_2,f_1;A_3,A_2,A_1)$ is  admissible. 
Take any such $(A_3,A_2,A_1)$. 
By the assumption, $f_1\circ f_0\simeq *$ and for any $A_0:f_1\circ f_0\simeq *$, 
we have $[f_2,A_2,f_1]\circ(f_1,A_0,f_0)\simeq *$ so that $(f_3,f_2,f_1,f_0;A_2,A_1,A_0)$ is admissible. 
Hence $\{f_3,[f_2,A_1,f_1],(f_1,A_0,f_0)\}\subset\{f_3,f_2,f_1,f_0\}^{(\ddot{s}_t)}$ by Theorem~6.7.1. 
Now we have
\begin{align*}
&\{f_4,[f_3,A_2,f_2],(f_2,A_1,f_1)\}\circ\Sigma^2 f_0\\
&=-\{f_4,[f_3,A_2,f_2],(f_2,A_1,f_1)\}\circ\Sigma q_{f_1}\circ\Sigma (f_1,A_0,f_0)\\
&\hspace{4cm} (\text{since $q_{f_1}\circ(f_1,A_0,f_0)\simeq -\Sigma f_0$})\\
&\subset-\{f_4,[f_3,A_2,f_2],(f_2,A_1,f_1)\circ q_{f_1}\}\circ\Sigma(f_1,A_0,f_0)\\
&\hspace{4cm} (\text{by \cite[Proposition 1.2(i)]{T}})\\
&=-\{f_4,[f_3,A_2,f_2],i_{f_2}\circ[f_2,A_1,f_1]\}\circ\Sigma(f_1,A_0,f_0)\ 
(\text{by \cite[(5.11)]{Og}})\\
&\subset-\{f_4,[f_3,A_2,f_2]\circ i_{f_2},[f_2,A_1,f_1]\}\circ\Sigma(f_1,A_0,f_0)\\
&\hspace{4cm} (\text{by \cite[Proposition 1.2(ii)]{T}})\\
&=-\{f_4,f_3,[f_2,A_1,f_1]\}\circ\Sigma(f_1,A_0,f_0)\\
&=f_4\circ\{f_3,[f_2,A_1,f_1],(f_1,A_0,f_0)\}\ (\text{by \cite[Proposition 1.4]{T}}).
\end{align*}
Hence we have the assertion.
\end{proof}

\begin{rem'}
{\rm(1)} It follows from Theorem 6.7.1, Theorem 6.5.1 and \cite[Proposition~B.6]{OO1} that 
if $\vec{\bm f}=(f_4,f_3,f_2,f_1)$ is admissible, then 
$$
\langle \vec{\bm f}\,\rangle
\supset \bigcup\Big[\{f_4,[f_3,A_2,f_2],(f_2,A_1,f_1)\}\cup\{[f_4,A_3,f_3],(f_3,A_2,f_2),-\Sigma f_1\}\Big],
$$
where $\bigcup$ is taken over all $\vec{\bm A}=(A_3,A_2,A_1)$ such that $(\vec{\bm f};\vec{\bm A})$ is admissible. 

{\rm(2)} When we work in $\mathrm{TOP}^{clw}$, it can be shown that the Walker's $4$-fold product of 
$\vec{\bm \alpha}=(\alpha_4,\alpha_3,\alpha_2,\alpha_1)$ is not empty if and only if $\vec{\bm \alpha}$ is admissible. 
\end{rem'}

\subsection{Proof of (1.10)}
We prove the following which is the same as (1.10). 

\begin{prop'}
\begin{enumerate}
\item[\rm(1)] If $\{f_{n-1},\dots,f_1\}^{(q)}\ni 0$ and $\{f_n,\dots,f_k\}^{(aq\ddot{s}_2)}
=\{0\}$ for 
all $k$ with $2\le k< n$, then 
$\{f_n,\dots,f_1\}^{(\star)}$ is not empty for all $\star$. 
\item[\rm(2)]$($cf.\,\cite[Lemma 2.2]{W2}$)$ If $\{f_n,\dots,f_2\}^{(q)}\ni 0$ and 
$\{f_k,\dots,f_1\}^{(aq\ddot{s}_2)}=\{0\}$ for all $k$ with $2\le k< n$, 
then $\{f_n,\dots,f_1\}^{(\star)}$ is not empty for all $\star$.
\end{enumerate}
\end{prop'}
\begin{proof}
When $n=3$, the assertions hold by Theorem~6.6.1 and Corollary~6.2.2(4). 
Hence we suppose $n\ge 4$. 

(1) By Corollary 6.2.2(5) and the assumptions, there exists an $aq\ddot{s}_2$-presentation 
$\{\mathscr{S}_r,\overline{f_r}, \Omega_r\,|\,2\le r<n\}$ of $(f_{n-1},\dots,f_1)$ such that 
$\overline{f_{n-1}}\circ g_{n-1,n-2}\simeq *$. 
Since $j_{n-1,n-2}$ is a homotopy cofibre of $g_{n-1,n-2}$, there is a homotopy equivalence 
$e:C_{n-1,n-1}\to C_{n-1,n-2}\cup_{g_{n-1,n-2}}C\Sigma^{n-3}X_1$ such that $e\circ j_{n-1,n-2}=i_{g_{n-1,n-2}}$. 
Let $\overline{f_{n-1}}':C_{n-1,n-2}\cup_{g_{n-1,n-2}}C\Sigma^{n-3}X_1\to X_n$ 
be an extension of $\overline{f_{n-1}}$. 
Set $\overline{f_{n-1}}^*=\overline{f_{n-1}}'\circ e:C_{n-1,n-1}\to X_n$, 
$\mathscr{S}_n=\mathscr{S}_{n-1}(\overline{f_{n-1}}^*,\Omega_{n-1})$, 
and $\Omega_n=\widetilde{\Omega_{n-1}}$. 
Since $f_n\circ f_{n-1}\simeq *$ by the assumption $\{f_n,f_{n-1}\}^{(aq\ddot{s}_2)}=\{0\}$, 
$f_n$ has an extension $\overline{f_n}^2:C_{n,2}=X_n\cup_{f_{n-1}}CX_{n-1}\to X_{n+1}$. 
Since $\overline{f_n}^2\circ g_{n,2}$ represents an element of 
$\{f_n,f_{n-1},f_{n-2}\}^{(aq\ddot{s}_2)}=\{0\}$, 
and since $j_{n,2}$ is a homotopy cofibre of $g_{n,2}$, $\overline{f_n}^2$ has 
an extension $\overline{f_n}^3:C_{n,3}=X_n\cup_{\overline{f_{n-1}}^{*2}}CC_{n-1,2}\to X_{n+1}$. 
We inductively have a map $\overline{f_n}:C_{n,n-1}=X_n\cup_{\overline{f_{n-1}}^{*n-2}}CC_{n-1,n-2}\to X_{n+1}$ 
which is an extension of $f_n$. 
Then the collection obtained from $\{\mathscr{S}_r,\overline{f_r},\Omega_r\,|\,2\le r\le n\}$ 
by replacing $\overline{f_{n-1}}$ with $\overline{f_{n-1}}^*$ is an $aq\ddot{s}_2$-presentation of $(f_n,\dots,f_1)$. 
Thus $\{f_n,\dots,f_1\}^{(aq\ddot{s}_2)}$ is not empty and so $\{f_n,\dots,f_1\}^{(\star)}$ is not empty 
for all $\star$ by Corollary 6.2.2(4). 

(2) We set $X_r'=X_{r+1}\ (1\le r\le n)$ and $f_r'=f_{r+1}\ (1\le r<n)$. 
By the assumptions and Corollary 6.2.2(5), $\{f_{n-1}',\dots,f_1'\}^{(aq\ddot{s}_2)}$ contains $0$. 
Let $\{\mathscr{S}_r',\overline{f_r'},\Omega_r'\,|\,2\le r<n\}$ be an $aq\ddot{s}_2$-presentation of 
$(f_{n-1}',\dots,f_1')$ with $\overline{f_{n-1}'}\circ g_{n-1,n-2}'\simeq *$.  
Set $\mathscr{S}_2=(X_1;X_2,X_2\cup_{f_1}CX_1;f_1;i_{f_1})$ and $\Omega_2=\{q_{f_1}'\}$. 
Let $\overline{f_2}:C_{2,2}=X_2\cup_{f_1}CX_1\to X_3$ be an extension of $f_2$. 
Set $\mathscr{S}_3=\mathscr{S}_2(\overline{f_2},\Omega_2)$ and $\Omega_3=\widetilde{\Omega_2}$. 
Then $C_{3,s}=C'_{2,s}\ (1\le s\le 2)$, $j_{3,1}=j_{2,1}'$, $g_{3,1}=g_{2,1}'$, and $\Omega_3$ contains $\Omega_2'$. 
Set $\overline{f_3}^2=\overline{f_2'}:C_{3,2}\to X_4$. 
By the assumptions, $\overline{f_3}^2\circ g_{3,2}\simeq *$ so that $\overline{f_3}^2$ 
can be extended to a map $\overline{f_3}:C_{3,3}\to X_4$ which is an extension of $f_3$.   
Set $\mathscr{S}_4=\mathscr{S}_3(\overline{f_3},\Omega_3)$ and $\Omega_4=\widetilde{\Omega_3}$. 
Then $C_{4,s}=C_{3,s}'\ (1\le s\le 3)$, $j_{4,s}=j'_{3,s}\ (1\le s\le 2)$, $g_{4,s}=g'_{3,s}\ 
(1\le s\le 2)$, and $\Omega_4$ contains $\Omega_3'$. 
When $n=4$, $\overline{f_4}:=\overline{f_3'}:C_{4,3}\to X_5$ is an extension of $f_4$ and 
we obtain $aq\ddot{s}_2$-presentation $\{\mathscr{S}_r,\overline{f_r},\Omega_r\,|\,2\le r\le 4\}$ of $(f_4,\dots,f_1)$ 
so that $\{f_4,\dots,f_1\}^{(\star)}$ is not empty for all $\star$. 
When $n\ge 5$, by repeating the above process, we have an $aq\ddot{s}_2$-presentation of $(f_n,\dots,f_1)$. 
Hence $\{f_n,\dots,f_1\}^{(\star)}$ is not empty for all $\star$. 
This completes the proof of Proposition 6.8.1.
\end{proof}

\subsection{Stable higher Toda brackets}

A stable $n$-fold bracket for $n\ge 3$ was defined in \cite{C,W2} (cf.\,\cite{K,Po}).  
We will give another definition which is a generalization of \cite[p.32]{T}. 
We set $\{X,Y\}=\varinjlim_{k}[\Sigma^k X,\Sigma^k Y]$. 
Given $\beta_i\in\{X_i,X_{i+1}\}\ (1\le i\le n)$, we will define  
$\{\beta_n,\dots,\beta_1\,\}^{(\star)}\subset\{\Sigma^{n-2}X_1,X_{n+1}\}$. 
Take a non-negative integer $m$ such that $\beta_i$ is represented by $f^m_i:\Sigma^mX_i\to\Sigma^mX_{i+1}$ 
for all $i$ and set $\vec{{\bm f}^m}=(f^m_n,\dots,f^m_1)$. 
The following square is commutative for every integer $M\ge 0$. 
$$
\xymatrix{
[\Sigma^m\Sigma^{n-2}X_1,\Sigma^mX_{n+1}] \ar[r]^-{\Sigma^M}
 & [\Sigma^M\Sigma^m\Sigma^{n-2}X_1,\Sigma^M\Sigma^mX_{n+1}]\\
[\Sigma^{n-2}\Sigma^m X_1,\Sigma^mX_{n+1}] \ar[r]^-{\Sigma^M}  
\ar[u]^-{(1_{X_1}\wedge\tau(\s^{n-2},\s^m))^*}
&[\Sigma^M\Sigma^{n-2}\Sigma^mX_1,\Sigma^M\Sigma^mX_{n+1}] \ar[u]_-{(1_{X_1}\wedge\tau(\s^{n-2},\s^m)\wedge 1_{\s^{M}})^*}
}
$$
We have
\begin{align*}
&\Sigma^M(\{\vec{{\bm f}^m}\}^{(\star)}\circ(1_{X_1}\wedge\tau(\s^{n-2},\s^m)))\\
&=\Sigma^M\{\vec{{\bm f}^m}\}^{(\star)}\circ(1_{X_1}\wedge\tau(\s^{n-2},\s^m)\wedge 1_{\s^M})\\
&\subset\{\Sigma^M\vec{{\bm f}^m}\}^{(\star)}\circ(1_{\Sigma^mX_1}\wedge\tau(\s^{n-2},\s^M))\circ 
(1_{X_1}\wedge\tau(\s^{n-2},\s^m)\wedge 1_{\s^M})  \\ &\hspace{8cm}(\text{by (6.3.1)})\\
&=\{\Sigma^M\vec{{\bm f}^m}\}^{(\star)}\circ(1_{X_1}\wedge\tau(\s^{n-2},\s^m\wedge\s^M)).
\end{align*}
Hence the sequence 
$\{\{\Sigma^M\vec{{\bm f}^m}\}^{(\star)}\circ(1_{X_1}\wedge\tau(\s^{n-2},\s^m\wedge\s^M))\}_{M\ge 0}$ 
defines a subset of $\{\Sigma^{n-2}X_1,X_{n+1}\}$. 
We denote it by $\{\beta_n,\dots,\beta_1\}^{(\star)}$. 
It does not depend on the choice of $\vec{{\bm f}^m}$. 
For another $\vec{{\bm f}^k}$, there exist $M,K$ such that $\Sigma^Mf^m_i\simeq\Sigma^Kf^k_i$ for all $i$. 
In this case $m+M=k+K$ and $\{\Sigma^M\vec{{\bm f}^m}\}^{(\star)}=\{\Sigma^K\vec{{\bm f}^k}\}^{(\star)}$ by Theorem 6.4.1. 
Hence $\{\Sigma^M\vec{{\bm f}^m}\}^{(\star)}\circ(1_{X_1}\wedge\tau(\s^{n-2},\s^m\wedge\s^M))=
\{\Sigma^K\vec{{\bm f}^k}\}^{(\star)}\circ(1_{X_1}\wedge\tau(\s^{n-2},\s^k\wedge\s^K))$. 
Thus $\{\beta_n,\dots,\beta_1\}^{(\star)}$ is well-defined.  

\begin{appendix}
\numberwithin{equation}{section}

\section{Proof of Proposition 2.2}
Given a free space $X$, we set $\Gamma X=(X\times I)/(X\times\{1\})$ which is called 
the {\it unpointed cone} on $X$ and whose point represented by $(x,t)\in X\times I$ is denoted by $x\wedge t$. 
We regard $X$ as a subspace of $\Gamma X$ by the identification $x=x\wedge 0\ (x\in X)$. 
We set $SX=\Gamma X/X$ which is called the {\it unpointed suspension} of $X$ and 
whose point represented by $(x,t)\in X\times I$ is denoted by $x\wedge\overline{t}$. 
For a map $f:X\to Y$, we define $\Gamma f:\Gamma X\to \Gamma Y$ and $Sf:SX\to SY$ by 
$\Gamma f(x\wedge t)=f(x)\wedge t$ and $Sf(x\wedge\overline{t})=f(x)\wedge\overline{t}$, 
and we denote by $Y\cup_f\Gamma X$ the quotient space of $Y+\Gamma X$ by the equivalence relation generated by the relation 
$f(x)\sim x\wedge 0\ (x\in X)$. 

Given two free maps $X\overset{u}{\longleftarrow}A\overset{v}{\longrightarrow}Y$, 
let $X\overset{uAv}{+}Y$ denote 
the quotient space of $X+Y$ by the equivalence relation generated by the relation $u(a)\sim v(a)\ (a\in A)$. 
Let $X\overset{i_X}{\longrightarrow} X+Y \overset{i_Y}{\longleftarrow} Y$ be the inclusion maps 
and $q:X+Y\to X\overset{uAv}{+}Y$ the quotient map. 
Then the following is a push-out diagram in $\mathrm{TOP}$. 
\begin{equation}
\begin{CD}
A @>v>> Y\\
@VuVV @ VVq\circ i_YV\\
X @>q\circ i_X>> X\overset{uAv}{+}Y
\end{CD}
\end{equation}
The space $X\overset{uAi}{+}\Gamma A$ which is induced from $X\overset{u}{\longleftarrow}A\overset{i}{\subset} \Gamma A$ is denoted by 
 $X\cup_u\Gamma A$ and called the {\it unpointed mapping cone} of $u$. 

\begin{lemma} 
Given the push-out diagram (A.1), 
if $u$ is a free (resp.\,closed free) cofibration, then $q\circ i_Y$ is a free (resp.\,closed free) cofibration. 
\end{lemma}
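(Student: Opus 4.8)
The claim is a standard "pushout preserves cofibrations" statement, so the plan is to verify the homotopy extension property (HEP) for $q\circ i_Y : Y \to X\overset{uAv}{+}Y$ directly, using the HEP for $u:A\to X$. Suppose we are given a space $Z$, a map $F:\bigl(X\overset{uAv}{+}Y\bigr)\to Z$ and a homotopy $G:Y\times I\to Z$ with $G\circ i_0^Y = F\circ q\circ i_Y$. Pulling back along the quotient $q$, the map $F$ corresponds to a pair of maps $F_X:X\to Z$, $F_Y:Y\to Z$ agreeing on $A$ via $u,v$ (i.e. $F_X\circ u = F_Y\circ v$), and $G$ is a homotopy starting at $F_Y$. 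First I would form the composite homotopy $G\circ (v\times 1_I):A\times I\to Z$, which starts at $F_Y\circ v = F_X\circ u$; since $u$ is a cofibration, there is a homotopy $H_X:X\times I\to Z$ with $H_X\circ i_0^X = F_X$ and $H_X\circ(u\times 1_I) = G\circ(v\times 1_I)$.

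The key step is then to glue $H_X$ and $G$ into a homotopy on $X\overset{uAv}{+}Y$. Because $H_X\circ(u\times 1_I) = G\circ(v\times 1_I)$ as maps $A\times I\to Z$, the pair $(H_X,G)$ descends to a well-defined map $H:\bigl(X\overset{uAv}{+}Y\bigr)\times I\to Z$ — here one uses that $\bigl(X\overset{uAv}{+}Y\bigr)\times I$ is again a pushout of $X\times I \leftarrow A\times I \rightarrow Y\times I$, which holds because $-\times I$ preserves pushouts in $\mathrm{TOP}$ (it is a quotient, and products with a fixed space commute with quotient maps). This $H$ satisfies $H\circ i_0 = F$ and $H\circ(q\circ i_Y\times 1_I) = G$, giving the required extension; hence $q\circ i_Y$ is a free cofibration.

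For the closed case, I would additionally invoke the characterization of closed cofibrations (e.g. via a Str\o m-type structure, or simply: a closed inclusion $j:A\hookrightarrow X$ is a cofibration iff $A\times I\cup X\times\{0\}$ is a retract of $X\times I$). The plan is to transport a retraction $r:X\times I\to A\times I\cup X\times\{0\}$ witnessing that $u$ is a closed cofibration across the pushout: the pushout $\bigl(X\overset{uAv}{+}Y\bigr)\times I$ has $(q\circ i_Y)(Y)\times I \cup \bigl(X\overset{uAv}{+}Y\bigr)\times\{0\}$ as a retract, built by combining $r$ on the $X$-part with the identity on the $Y$-part (they agree on the overlap coming from $A$). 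One also checks $q\circ i_Y(Y)$ is closed in $X\overset{uAv}{+}Y$: its preimage under $q$ is $Y + u(A) \subset X+Y$, and $u(A)$ is closed in $X$ since $u$ is a closed cofibration hence a closed embedding.

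\textbf{Main obstacle.} The only genuinely delicate point is the gluing step: justifying that $\bigl(X\overset{uAv}{+}Y\bigr)\times I$ is the pushout of $X\times I \leftarrow A\times I \rightarrow Y\times I$, i.e. that the quotient topology is compatible with taking products with $I$. This is where one must be careful about point-set hypotheses, but since $I$ is locally compact Hausdorff, the functor $-\times I$ preserves all quotient maps (Whitehead's theorem), so no extra assumptions on $A,X,Y$ are needed; everything else is formal diagram-chasing.
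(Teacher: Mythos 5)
Your proposal is correct, but it takes a noticeably different route from the paper's. For the free case the paper simply cites tom~Dieck (5.1.8); you instead give a direct verification of the homotopy extension property for $q\circ i_Y$, with the only real content being that $-\times I$ preserves quotients (since $I$ is compact), so that $(X\overset{uAv}{+}Y)\times I$ is again the pushout of $X\times I\leftarrow A\times I\rightarrow Y\times I$ and the extended homotopy $H_X$ glues with $G$. That is a perfectly valid unpacking of the cited result. For the closed case, the paper's proof is shorter and more to the point: it writes down the identity $q^{-1}\bigl(q\circ i_Y(B)\bigr)=u\bigl(v^{-1}(B)\bigr)+B$ for every $B\subset Y$, which holds because $u$ is injective, and reads off immediately that $q\circ i_Y$ maps closed sets to closed sets once $u$ does. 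Your plan inserts a detour through the Str\o m retraction criterion that is actually redundant: transporting $r:X\times I\to A\times I\cup X\times\{0\}$ across the pushout only re-establishes that $q\circ i_Y$ is a cofibration (already known from the free case) and says nothing about closedness, since the retraction characterization holds for all cofibrations, not just closed ones. The part of your argument that does carry the load is the last observation, that $q^{-1}\bigl(q\circ i_Y(Y)\bigr)=Y+u(A)$ is closed, so $q\circ i_Y$ has closed image; combined with the fact that a cofibration is an embedding, this gives that $q\circ i_Y$ is a closed map. That is logically sufficient and essentially the case $B=Y$ of the paper's identity, but the intermediate appeal to the embedding property should be made explicit, and the retraction-transport paragraph can be dropped without loss. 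The paper's version has the advantage of proving the closedness of $q\circ i_Y(B)$ for all closed $B$ directly, with no appeal to the embedding theorem.
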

\begin{proof}
Suppose that $u$ is a free cofibration. 
Then, as is well-known (for example \cite[(5.1.8)]{tD}), $q\circ i_Y$ is a free cofibration. 
Since $u$ is injective, the equality $q^{-1} (q\circ i_Y(B))=u(v^{-1}(B))+B$ holds for every subset $B$ of $Y$. 
Hence if $u$ is closed then $q\circ i_Y$ is closed. 
\end{proof}

Note that, given a pointed map $f:X\to Y$, we have the following commutative diagram in which all maps are quotient maps. 
$$
\xymatrix{
Y+X\times I \ar[r]^-{1_Y+q} \ar[dr]_-{1_Y+q} & Y+\Gamma X \ar[r]^-q \ar[d]^-{1_Y+q} & Y\cup_f\Gamma X \ar[d]^-{1_Y\cup q}\\
& Y+CX \ar[r]^-q & Y\cup_f CX
}
$$

\begin{proof}[Proof of Proposition 2.2]
We can suppose that $j:A\subset X$ by \cite[Theorem 1]{S1}. 
Consider the following commutative diagram. 
$$
\xymatrix{
Y+A\times I \ar[d]_i \ar[r]^-{\pi_1} & Y\cup_{f\circ j} \Gamma A \ar[dd]^-{1_Y\cup\Gamma j} \ar[r]^-{q_1} & Y\cup_{f\circ j} CA \ar[dd]^-{1_Y\cup Cj}\\
Y+X\times\partial I\cup A\times I \ar[d]_-{i'} \ar@{.>}[ur]_-{\varphi_f} &\\
Y+X\times I\ar[r]^-{\pi_2} & Y\cup_f \Gamma X \ar[r]^-{q_2} & Y\cup_f CX
}
$$
where $\pi_1,\pi_2, q_1, q_2$ are quotient maps, $i,i'$ are inclusions, and $\varphi_f$ is defined by 
\begin{gather*}
\varphi_f|_Y=\pi_1|_Y,\quad \varphi_f(x,0)=\pi_1f(x),\\
 \varphi_f(x,1)=\pi_1(a,1)\ (a\in A),\quad \varphi_f|_{A\times I}=\pi_1|_{A\times I}.
\end{gather*}
First we show that $\varphi_f$ is continuous. 
By \cite[Theorem 2]{S2}, $X\times\{0\}\cup A\times I$ and $X\times\{1\}\cup A\times I$ are retracts of $X\times I$. 
Therefore, since $\varphi_f$ is continuous on the subspaces $X\times\{0\}$, $X\times\{1\}$, $A\times I$ of the space $X\times I$, 
it follows from \cite[Lemma~3]{S2} that $\varphi_f$ is continuous on the subspaces $X\times\{0\}\cup A\times I$, 
$X\times\{1\}\cup A\times I$ of the space $X\times I$. 
Hence $\varphi_f$ is continuous on 
the open subspaces $X\times\{0\}\cup A\times[0,1)$, $X\times\{1\}\cup A\times(0,1]$ of the space $X\times\partial I\cup A\times I$. 
Therefore $\varphi_f$ is continuous on 
$X\times\{0\}\cup A\times [0,1)\cup X\times\{1\}\cup A\times (0,1]=X\times\partial I\cup A\times I$ 
so that $\varphi_f$ is continuous. 

Secondly we show that $(\pi_2,1_Y\cup\Gamma j)$ is a push-out of $(i',\varphi_f)$ 
in $\mathrm{TOP}$. 
For any space $Z$ and any maps $Y+X\times I\overset{g}{\rightarrow} Z\overset{h}{\leftarrow} Y\cup_{f\circ j}\Gamma A$ 
such that $g\circ i'=h\circ\varphi_f$, 
there exists only one map $k:Y\cup_f\Gamma X\to Z$ with $k\circ \pi_2=g$. 
It is obvious that $k\circ(1_Y\cup\Gamma j)=h$. 
Hence $(\pi_2,1_Y\cup\Gamma j)$ is a push-out of $(i',\varphi_f)$ 
in $\mathrm{TOP}$. 

Thirdly we show that $1_Y\cup\Gamma j$ is a free (resp.\,closed free) cofibration. 
By the last assertion and Lemma~A.1, it suffices to show that $i'$ is a free (resp.\,closed free) cofibration. 
Since the inclusion $\partial I\subset I$ is a closed free cofibration, it follows from \cite[Theorem 6]{S2} 
that the inclusion $X\times\partial I\cup A\times I\subset X\times I$ is a free 
(resp.\,closed free) cofibration so that $i'$ is a free (resp.\,closed free) cofibration. 

Fourthly we show that $1_Y\cup Cj:Y\cup_{f\circ j}CA\to Y\cup_f CX$ is a free (resp.\,closed free) cofibration. 
By Lemma A.1 it suffices to show that the last square of the above diagram is a push-out in $\mathrm{TOP}$. 
Let $Z$ be any space and $Y\cup_f \Gamma X\overset{g}{\rightarrow}Z\overset{h}{\leftarrow}Y\cup_{f\circ j}CA$ 
any maps such that $g\circ(1_Y\cup\Gamma j)=h\circ q_1$. 
Then there is only one map $k:Y\cup_f CX\to Z$ with $k\circ q_2=g$. 
It is obvious that $k\circ (1_Y\cup Cj)=h$. 
Hence the last square of the above diagram is a push-out in $\mathrm{TOP}$. 
\end{proof} 

The following corollary overlaps with \cite[(6.13)]{J}. 

\begin{cor}
If $j:A\to X$ is a free (resp.\,closed free) cofibration, 
then $\Gamma j:\Gamma A\to \Gamma X$ 
and $Sj:SA\to SX$ are free (resp.\,closed free) cofibrations. 
If in addition $j$ is pointed, then $Cj:CA\to CX$ is a free (resp.\,closed free) cofibration. 
\end{cor}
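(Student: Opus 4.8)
The plan is to reduce everything to \emph{one} construction, applied three times, and then to invoke Lemma~A.1 together with \cite[Theorem~6]{S2} (used already in the proof of Proposition~2.2). First I would use \cite[Theorem~1]{S1} to assume that $j$ is an inclusion $A\subseteq X$, with $A$ closed in $X$ in the ``closed free'' case. Then, writing $F$ for whichever of the three functors $\Gamma,S,C$ is at issue, the map $Fj$ is the natural inclusion of a quotient of $A\times I$ into the corresponding quotient of $X\times I$: $\Gamma X=(X\times I)/(X\times\{1\})$ and $\Gamma j=\Gamma A\hookrightarrow\Gamma X$; $SX=(X\times I)/(X\times\{0\}\cup X\times\{1\})$ and $Sj=SA\hookrightarrow SX$; and, when $j$ is pointed (so $x_0\in A$), $CX=(X\times I)/(\{x_0\}\times I\cup X\times\{1\})$ and $Cj=CA\hookrightarrow CX$.

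I would then treat $\Gamma j$ as follows, the other two cases being identical up to the obvious change of collapsed subset. Since $\{1\}\subseteq I$ is a closed free cofibration and $j\colon A\subseteq X$ is a free (resp.\ closed free) cofibration, $X\times\{1\}\cup A\times I\hookrightarrow X\times I$ is a free (resp.\ closed free) cofibration by \cite[Theorem~6]{S2}. Collapsing $X\times\{1\}$ to the cone point and restricting the quotient $X\times I\to\Gamma X$ gives a commutative square
$$
\begin{CD}
X\times\{1\}\cup A\times I @>>> X\times I\\
@VVV @VVV\\
\Gamma A @>\Gamma j>> \Gamma X
\end{CD}
$$
in which the left-hand vertical arrow is the quotient identifying $X\times\{1\}$ to a point, its restriction to $A\times I$ being the quotient onto $\Gamma A$. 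I would verify that this square is a push-out in $\mathrm{TOP}$, and then apply Lemma~A.1 with ``$u$'' the top cofibration and ``$v$'' the left-hand quotient, concluding that $\Gamma j$ is a free (resp.\ closed free) cofibration. For $Sj$ I would instead start from the cofibration $X\times\{0,1\}\cup A\times I\hookrightarrow X\times I$ (again by \cite[Theorem~6]{S2}, since $\{0,1\}\subseteq I$ is a closed free cofibration) and collapse $X\times\{0\}$ and $X\times\{1\}$; for $Cj$, using $x_0\in A$ so that $\{x_0\}\times I\cup A\times I=A\times I$, I would start from the same cofibration $A\times I\cup X\times\{1\}\hookrightarrow X\times I$ and collapse $\{x_0\}\times I\cup X\times\{1\}$. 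In each case the bottom-right term of the resulting push-out square is exactly $SX$, resp.\ $CX$, and Lemma~A.1 gives the conclusion; since in the ``closed free'' case the collapsed subset of $X\times I$ is closed and Lemma~A.1 preserves closedness, the closed version comes for free.

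The only non-formal step is the verification that these squares are genuine push-outs in $\mathrm{TOP}$, and I expect this bookkeeping --- entirely mechanical point-set topology --- to be the main obstacle. The essential points are that $X\times I\to FX$ is a quotient map, that the equivalence relation it generates coincides with the one generated by gluing $X\times I$ to $FA$ along $X\times\{1\}\cup A\times I$ (resp.\ the analogous subspace), the extra identifications inside $A\times I$ being precisely those that define $FA$, and that the resulting quotient topology on $FA$ agrees with the one it already carries because $A\times I$ is a subspace of $X\times I$. Once these identifications are in place there is nothing left to prove. (The degenerate case $A=\emptyset$ in the closed version is handled separately, the empty inclusion being a cofibration.)
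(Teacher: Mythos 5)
Your argument is correct, but it takes a route that is genuinely different from the paper's. The paper factors $\Gamma j$ as a composite
$\Gamma A\xrightarrow{j\cup\Gamma 1_A} X\cup_j\Gamma A\xrightarrow{1_X\cup\Gamma j}\Gamma X$
through the unpointed mapping cone $X\cup_j\Gamma A$, proves each factor is a cofibration via two separate applications of Lemma~A.1 (the second one being exactly the push-out used in the proof of Proposition~2.2, with $\varphi_{1_X}$ in place of $\varphi_f$), and then obtains $Sj$ from a third push-out (collapsing $X\subseteq X\cup_j\Gamma A$). You instead run a single push-out per functor, directly from the Str{\o}m cofibration $X\times\{1\}\cup A\times I\hookrightarrow X\times I$ (resp.\ $X\times\partial I\cup A\times I\hookrightarrow X\times I$) to $\Gamma j$ (resp.\ $Sj$, $Cj$). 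This is a real simplification: you avoid the intermediate space $X\cup_j\Gamma A$ and the two-step composition entirely, and the three cases become uniform instances of one argument. What the paper's factorization buys in exchange is that it reuses verbatim the continuity argument for $\varphi_f$ already carried out in Proposition~2.2, so nothing new needs to be checked.

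The one place where I would push you to be less cavalier is the phrase ``entirely mechanical point-set topology.'' The left-hand vertical map $p\colon X\times\{1\}\cup A\times I\to\Gamma A$ (and its $S$- and $C$-analogues) is continuous on $X\times\{1\}$ and on $A\times I$ separately, but $A\times I$ is \emph{not} in general closed in $X\times\{1\}\cup A\times I$ when $j$ is merely a free (not closed) cofibration, so the closed-pasting lemma does not apply directly. This is precisely the point at which the paper invokes the retraction $X\times I\to X\times\{1\}\cup A\times I$ of \cite[Theorem~2]{S2} together with \cite[Lemma~3]{S2}; you need the same tool, and your ``bookkeeping'' remark understates that a specific nontrivial lemma is doing the work. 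Once $p$ is continuous, your universal-property verification of the push-out square is straightforward (and you do \emph{not} need $p$ to be a quotient map for it, only continuity), so the rest goes through. One further small point: your explanation of why the ``closed free'' case holds --- ``the collapsed subset of $X\times I$ is closed'' --- is a red herring; the correct reason is simply that \cite[Theorem~6]{S2} makes the top inclusion a \emph{closed} free cofibration whenever $j$ is, and Lemma~A.1 then yields the closed conclusion directly, with no separate argument about the collapsed set.
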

\begin{proof}
We can suppose that $j:A\subset X$. 
Consider the commutative diagram
$$
\xymatrix{
A+A\times I \ar[d]_-{i_1} \ar[r]^-{q_1} &A\cup_{1_A}\Gamma A=\Gamma A \ar[d]^-{j\cup\Gamma 1_A} &\\ 
X+A\times I \ar[d]_-{i_2} \ar[r]^-{q_2} & X\cup_j\Gamma A \ar[dd]^-{1_X\cup\Gamma j} \ar[r]^-{/X} & SA \ar[dd]^-{Sj} &\\ 
X+X\times\partial I\cup A\times I \ar@{.>}[ur]_-{\varphi_{1_X}} \ar[d]_-{i_3} & & \\
X+X\times I \ar[r]^-{q_3} & X\cup_{1_X}\Gamma X=\Gamma X \ar[r]^-{/X} & SX
}
$$
where $i_1,i_2,i_3$ are inclusions, $q_1,q_2,q_3$ are quotients, 
and $\varphi_{1_X}$ is defined as in the proof of Proposition~2.2. 
When we take off $i_2$, the remaining three squares of the diagram are push-outs in $\mathrm{TOP}$. 
Since $i_1, i_3$ are free (resp.\,closed free) cofibrations, it follows that $j\cup\Gamma 1_A, 1_X\cup\Gamma j$ 
are free (resp.\,closed free) cofibrations so that 
$\Gamma j=(1_X\cup\Gamma j)\circ (j\cup\Gamma 1_A)$ and $Sj$ are free (resp.\,closed free) cofibrations. 

Suppose that $j$ is pointed. 
Consider the commutative diagram
$$
\xymatrix{
A\cup_{1_A}\Gamma A=\Gamma A \ar[d]_-{j\cup\Gamma 1_A} \ar[r]^-{q_4} & A\cup_{1_A} CA=CA \ar[d]^-{j\cup C1_A}\\
X\cup_j\Gamma A \ar[d]_-{1_X\cup\Gamma j} \ar[r]^-{q_5} & X\cup_j CA \ar[d]^-{1_X\cup Cj}\\
X\cup_{1_X}\Gamma X \ar[r]^-{q_6} & X\cup_{1_X} CX=CX
}
$$
where $q_4,q_5,q_6$ are quotients. 
Since the two squares of the diagram are push-outs in $\mathrm{TOP}$, $
Cj=(1_X\cup Cj)\circ(j\cup C1_A)$ is a free (resp.\,closed free) cofibration by Lemma~A.1. 
This completes the proof of Corollary A.2.
\end{proof}

\section{J. Cohen's higher Toda brackets}

First we recall from \cite[Theorem 2]{S2} that the inclusion $j:X\subset  Y$ is a free cofibration 
if and only if there exists a retraction $r:Y\times I\to Y\times\{0\}\cup X\times I$. 
When $j$ is pointed, the pointed map $\delta : Y/X\to \Sigma X$ 
which makes the following diagram to be commutative was called ``canonical'' in \cite{C}.
$$
\xymatrix{
Y \ar[d]_-q \ar[r]^-{i_1^Y} & Y\times I \ar[r]^-r & Y\times\{0\}\cup X\times I \ar[d]^-p\\
Y/X \ar[rr]_-\delta & & \Sigma X
}
$$ 
Here $q$ is the quotient, $p(Y\times \{0\})=*$, and $p(x,t)=x\wedge\overline{t}$. 
An important property of $\delta$ is the following. 

\begin{lemma}
The canonical map $\delta$ is a connecting map in the cofibre sequence 
$$
\begin{CD}
X@>j>>Y@>q>>Y/X@>\delta>>\Sigma X@>-\Sigma j>>\Sigma Y@>-\Sigma q>>\cdots .
\end{CD}
$$
\end{lemma}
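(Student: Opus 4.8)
The plan is to identify the displayed sequence with the Puppe cofibre sequence of $j$, which by \cite{P} (as recalled in \S2 and \S3) has the form
$$
X\xrightarrow{\ j\ }Y\xrightarrow{\ i_j\ }C_j\xrightarrow{\ q_j\ }\Sigma X\xrightarrow{-\Sigma j}\Sigma Y\xrightarrow{-\Sigma i_j}\Sigma C_j\xrightarrow{-\Sigma q_j}\cdots,
$$
where $C_j=Y\cup_jCX$, $i_j$ is the cofibration of \S2, and $q_j\colon C_j\to C_j/Y=\Sigma X$ is the quotient. First I would record that, since $j$ is a free cofibration, the cone inclusion $CX\hookrightarrow C_j$ is a cofibration: it is the cobase change of $j$ along $X\hookrightarrow CX$, $x\mapsto x\wedge0$, which presents $C_j$ as a push-out. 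Hence, $CX$ being pointed contractible and (by Corollary~2.3) everything staying in $\mathrm{TOP}^w$, the collapse $\overline{p}\colon C_j\to C_j/CX$ is a pointed homotopy equivalence (cf.\,\cite{tD}). Under the canonical identification $C_j/CX=Y/X$ one has $\overline{p}\circ i_j=q$.

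Next I would assemble the comparison ladder whose top row is the Puppe sequence above and whose bottom row is the displayed sequence $X\xrightarrow{j}Y\xrightarrow{q}Y/X\xrightarrow{\delta}\Sigma X\xrightarrow{-\Sigma j}\Sigma Y\xrightarrow{-\Sigma q}\cdots$, the vertical map being $\Sigma^k\overline{p}$ in the positions $\Sigma^kC_j/\Sigma^k(Y/X)$ and an identity map elsewhere; all verticals are then homotopy equivalences. The squares over $X\to Y$ and over $Y\to C_j$ commute on the nose ($j=j$ and $\overline{p}\circ i_j=q$), the squares over $\Sigma^kX\to\Sigma^kY$ commute trivially, and the squares over $\Sigma^kY\to\Sigma^kC_j$ commute on the nose after applying $\Sigma^k$ to $\overline{p}\circ i_j=q$ (the two signs matching). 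Thus the whole assertion reduces to the homotopy commutativity of the one remaining square, i.e.\ to
$$
q_j\simeq\delta\circ\overline{p}\colon C_j\longrightarrow\Sigma X,
$$
for then the ladder exhibits the displayed sequence as a cofibre sequence whose connecting maps are $\delta,\,-\Sigma q,\,-\Sigma\delta,\dots$, as claimed.

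To prove $q_j\simeq\delta\circ\overline{p}$ I would write down an explicit homotopy. Recall that $\delta$ is characterised by $\delta\circ q=p\circ r\circ i_1^Y$, where $r\colon Y\times I\to Y\times\{0\}\cup X\times I$ is a retraction and $p$ is the map with $p(Y\times\{0\})=*$, $p(x,t)=x\wedge\overline{t}$. Define $\Phi\colon C_j\times I\to\Sigma X$ by
$$
\Phi(y,s)=p(r(y,s))\quad(y\in Y),\qquad \Phi(x\wedge t,s)=x\wedge\overline{\max\{t,s\}}\quad(x\wedge t\in CX).
$$
On $j(X)\subset Y$ one has $r(x,s)=(x,s)$, so $\Phi(x,s)=x\wedge\overline{s}=\Phi(x\wedge0,s)$; hence $\Phi$ is well defined on $C_j$, it is pointed, and it is continuous because $I$ is locally compact Hausdorff, so that $C_j\times I$ is the quotient of $(Y\times I)+(CX\times I)$. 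Finally $\Phi_0(y)=p(y,0)=*=q_j(y)$ and $\Phi_0(x\wedge t)=x\wedge\overline{t}=q_j(x\wedge t)$, while $\Phi_1(y)=p(r(y,1))=\delta(q(y))=\delta(\overline{p}(i_j(y)))$ and $\Phi_1(x\wedge t)=x\wedge\overline{1}=*=\delta(\overline{p}(x\wedge t))$; so $\Phi\colon q_j\simeq\delta\circ\overline{p}$.

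The main obstacle I anticipate is not conceptual but a matter of bookkeeping: one must check that the $q_j$ and $\delta$ used above are verbatim the maps fixed in \S2 and in the paragraph preceding the lemma, and that the signs $-\Sigma(\cdot)$ in the Puppe sequence match those in the statement (here Proposition~3.3(2), describing $q_{f}'$ and $e_a$, is the relevant input). Beyond Puppe's theorem and the collapse of a contractible cofibred subspace, no further homotopy-theoretic ingredient is needed.
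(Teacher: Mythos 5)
Your proof is correct and follows essentially the same route as the paper: both reduce the claim to exhibiting an explicit homotopy $q_j\simeq\delta\circ\overline{p}$ (the paper writes this as $q_j'\circ i_{i_j}\simeq\delta\circ\pi$, which is the same statement since $q_j'\circ i_{i_j}=q_j$ and $\pi=\overline{p}$) and then invoke the comparison ladder with the Puppe cofibre sequence; the only divergence is cosmetic, namely your interpolation $\max\{t,s\}$ versus the paper's $\min\{s+t,1\}$ in the homotopy on the cone coordinates.
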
 
\begin{proof}
Consider the following diagram, where $\pi$ is the usual homotopy equivalence \cite[Satz 2]{P}. 
\begin{equation}
\begin{CD}
X@>j>>Y@>i_j>>Y\cup_j CX@>i_{i_j}>>(Y\cup_j CX)\cup_{i_j}CY@>i_{i_{i_j}}>>\cdots\\
@. @| @V\simeq V\pi V @V\simeq V q_j'V @.\\
@. Y@>q>>Y/X@>\delta>>\Sigma X@>-\Sigma j>> \cdots
\end{CD}
\end{equation}
We define $u:I\times I\to I$ and $H:(Y\cup_j CX)\times I\to \Sigma X$ by 
$$
u(s,t)=\begin{cases} s+t & s+t\le 1 \\ 1 & s+t\ge 1 \end{cases},\quad 
H(y,t)=p\circ r(y,t),\quad H(x\wedge s,t)=x\wedge\overline{u(s,t)}.
$$
Then $H:q_j'\circ i_{i_j}\simeq \delta\circ \pi$. 
Hence the second square of (B.1) is homotopy commutative. 
Since the first square of (B.1) is commutative, 
this completes the proof. 
\end{proof}

J. Cohen \cite{C} defined an $n$-fold bracket $\langle\vec{\bm f}\,\rangle$ in the category $\mathrm{TOP}^*$, where 
$\vec{\bm f}=(f_n,\dots,f_1)$ and $f_i : X_i\to X_{i+1}$ is a map in $\mathrm{TOP}^*\ (1\le i\le n;\ n\ge 3)$.  
We are going to modify $\langle\vec{\bm f}\,\rangle$ to $\langle\vec{\bm f}\,\rangle^w$ 
(resp.\,$\langle\vec{\bm f}\,\rangle^{clw}$) 
by restricting $\mathrm{TOP}^*$ to its full-subcategory $\mathrm{TOP}^w$ (resp.\,$\mathrm{TOP}^{clw}$). 

Let $\circledast$ denote $*$, $w$ or $clw$. 

By $\vec{\bm f}=(f_n,\dots,f_1)\in\mathrm{TOP}^\circledast$, we mean that 
$f_i:X_i\to X_{i+1}$ is in $\mathrm{TOP}^\circledast$ for every $i$. 
To avoid confusions, we paraphrase Cohen's expression ``$X\in\{f_{n-1},\dots,f_2\}$'' in 
``$X$ is a finitely filtered space of type $(f_{n-1},\dots,f_2)$'' \cite{OO1}. 
Given $(f_{n-1},\dots,f_2)\in\mathrm{TOP}^\circledast$, where $f_i:X_i\to X_{i+1}$, 
a pointed space $X$ is a {\it finitely filtered space of type} $(f_{n-1},\dots,f_2)$ 
{\it in} $\mathrm{TOP}^\circledast$ if the following (1) and (2) are satisfied.
\begin{enumerate}
\item[\rm(1)] The pointed space $X$ has a filtration $F_0X=\{*\}\subset F_1X\subset\dots\subset F_{n-1}X=X$ 
such that the inclusion $F_kX\subset F_{k+1}X$ 
is a free, free or closed free cofibration for every $k$ according as $\circledast$ is $*$, $w$ or $clw$. 
(Hence $X,F_kX\in\mathrm{TOP}^w$.)\\
\item[\rm(2)] There exists $g_k:\Sigma^kX_{n-k}\simeq F_{k+1}X/F_kX$ for $0\le k\le n-2$ such that 
the next diagram is homotopy commutative for $1\le k\le n-2$.  
\begin{equation}
\begin{split}
\xymatrix{
\Sigma\Sigma^{k-1}X_{n+1-k} \ar[d]_-{\Sigma g_{k-1}} & & \Sigma^kX_{n-k} \ar[ll]_-{\Sigma^kf_{n-k}} \ar[d]^-{g_k}\\
\Sigma(F_kX/F_{k-1}X) & \Sigma F_kX \ar[l]^-{\Sigma q} & F_{k+1}X/F_kX \ar[l]^-{\delta}
}
\end{split}
\end{equation}
\end{enumerate}
Under the above situation, we set 
\begin{equation}
\left\{\begin{array}{@{\hspace{0.6mm}}l}
j_X: X_n=\Sigma^0X_n\overset{g_0}{\longrightarrow}F_1X\subset X,\\
\sigma_X:X=F_{n-1}X\overset{q}{\to} F_{n-1}X/F_{n-2}X\overset{g_{n-2}^{-1}}{\longrightarrow} \Sigma^{n-2}X_2.
\end{array}\right.
\end{equation}

We define $\langle\vec{\bm f}\,\rangle^\circledast$ for $\vec{\bm f}\in\mathrm{TOP}^\circledast$ to be 
the set of all $\alpha\in[\Sigma^{n-2}X_1,X_{n+1}]$ such that there is a finitely filtered space $X$ 
of type $(f_{n-1},\dots,f_2)$ in $\mathrm{TOP}^\circledast$ and a couple of maps $g,h$ which make 
the following diagram homotopy commutative and $\alpha$ is the homotopy class of $h\circ g$. 
\begin{equation}
\begin{split}
&\xymatrix{
 & \Sigma^{n-2}X_1 \ar[dl]_-{\Sigma^{n-2}f_1} \ar[d]^-g & \\
\Sigma^{n-2}X_2 & X \ar[l]^-{\sigma_X} \ar[d]_-h & X_n \ar[l]_-{j_X} \ar[dl]^-{f_n}\\
& X_{n+1} &
}
\end{split}
\end{equation}
Note that $\langle\vec{\bm f}\,\rangle^*$ is the bracket $\langle\vec{\bm f}\,\rangle$ defined by Cohen, 
$\langle\vec{\bm f}\,\rangle^w=\langle\vec{\bm f}\,\rangle^*$ if $\vec{\bm f}\in\mathrm{TOP}^w$, and 
$\langle\vec{\bm f}\,\rangle^{clw}\subset \langle\vec{\bm f}\,\rangle^w=\langle\vec{\bm f}\,\rangle^*$ 
if $\vec{\bm f}\in\mathrm{TOP}^{clw}$. 
If $\vec{\bm f}=(f_n,\dots,f_1)$ and $\vec{\bm f'}=(f_n',\dots,f_1')$ are in $\mathrm{TOP}^\circledast$ and 
satisfy $f_i\simeq f_i'$ for all $i$, 
and if $X$ is a finitely filtered space of type $(f_{n-1},\dots,f_2)$ in $\mathrm{TOP}^\circledast$, 
then $X$ is a finitely filtered space of type $(f_{n-1}',\dots,f_2')$ in $\mathrm{TOP}^\circledast$ so that 
$\langle\vec{\bm f}\, \rangle^\circledast=\langle\vec{\bm f'}\, \rangle^\circledast$. 

The following holds obviously from definitions. 

\begin{prop}
Given a map $f_0:X_0\to X_1$ in $\mathrm{TOP}^\circledast$, we have 
$$
\langle f_n,\dots,f_1\rangle^\circledast\circ \Sigma^{n-2}f_0\subset\langle f_n,\dots,f_2,f_1\circ f_0\rangle^\circledast.
$$ 
\end{prop}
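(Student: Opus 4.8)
The plan is to verify the inclusion $\langle f_n,\dots,f_1\rangle^\circledast\circ \Sigma^{n-2}f_0\subset\langle f_n,\dots,f_2,f_1\circ f_0\rangle^\circledast$ directly from Definition~B.4, essentially by observing that exactly the same filtered space witnesses both sides. First I would take $\alpha\in\langle f_n,\dots,f_1\rangle^\circledast$, so that there is a finitely filtered space $X$ of type $(f_{n-1},\dots,f_2)$ in $\mathrm{TOP}^\circledast$ together with maps $g:\Sigma^{n-2}X_1\to X$ and $h:X\to X_{n+1}$ making (B.6) homotopy commutative, and with $\alpha=[h\circ g]$. The point is that the notion ``finitely filtered space of type $(f_{n-1},\dots,f_2)$'' depends only on the maps $f_2,\dots,f_{n-1}$ and not on $f_1$; hence the very same $X$, with the same filtration, the same equivalences $g_k$, and the same structural maps $j_X$ and $\sigma_X$ from (B.5), is a finitely filtered space of type $(f_{n-1},\dots,f_2)$ to be used for the bracket $\langle f_n,\dots,f_2,f_1\circ f_0\rangle^\circledast$ as well.

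Next I would exhibit the two maps required for the right-hand bracket. Set $g'=g\circ\Sigma^{n-2}f_0:\Sigma^{n-2}X_0\to X$ and $h'=h:X\to X_{n+1}$. Then $h'\circ g'=h\circ g\circ\Sigma^{n-2}f_0$, whose homotopy class is precisely $\alpha\circ\Sigma^{n-2}f_0$. It remains to check that the diagram (B.6) for $(f_n,\dots,f_2,f_1\circ f_0)$ with $X_1$ replaced by $X_0$, $f_1$ replaced by $f_1\circ f_0$, and with the maps $g'$, $h'$, is homotopy commutative. The lower triangle $h'\circ\sigma_X\simeq$ (nothing to check, there is no constraint there) and the right triangle $f_n\circ j_X\simeq h'$ are unchanged from the original diagram, so those hold by hypothesis. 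For the left triangle one needs $\sigma_X\circ g'\simeq \Sigma^{n-2}(f_1\circ f_0)$; but $\sigma_X\circ g'=\sigma_X\circ g\circ\Sigma^{n-2}f_0\simeq (\Sigma^{n-2}f_1)\circ\Sigma^{n-2}f_0=\Sigma^{n-2}(f_1\circ f_0)$, using the homotopy commutativity of the left triangle of the original (B.6) and functoriality of $\Sigma^{n-2}$. Hence (B.6) commutes up to homotopy and $\alpha\circ\Sigma^{n-2}f_0\in\langle f_n,\dots,f_2,f_1\circ f_0\rangle^\circledast$.

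There is essentially no obstacle here: the only thing to be careful about is the bookkeeping of which space plays the role of ``$X_1$'' in the definition, and the observation that the filtered-space data are insensitive to the first map of the sequence; once that is noted, the argument is a one-line post-composition. I would write this up in a few sentences, noting that it holds uniformly for $\circledast\in\{*,w,clw\}$ since $f_0$ is assumed to be a map in $\mathrm{TOP}^\circledast$, so $\Sigma^{n-2}f_0$ is a pointed map between the appropriate kinds of spaces and the filtered space $X$ already lies in $\mathrm{TOP}^w$ (or $\mathrm{TOP}^{clw}$) as required.
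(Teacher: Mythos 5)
Your proof is correct and fills in precisely what the paper deems ``obvious from definitions'': the same finitely filtered space $X$ (whose type depends only on $f_2,\dots,f_{n-1}$) works, and precomposing $g$ with $\Sigma^{n-2}f_0$ converts the left triangle of (B.4) for $\vec{\bm f}$ into the one for $(f_n,\dots,f_2,f_1\circ f_0)$. One slip in wording: the right triangle reads $h\circ j_X\simeq f_n$, not $f_n\circ j_X\simeq h'$ (the latter does not typecheck), but since you correctly note that this triangle is unchanged the argument stands.
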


\begin{lemma}
Given $\vec{\bm f}\in\mathrm{TOP}^\circledast$, 
if $X$ is a finitely filtered space of type $(f_{n-1},\dots,f_2)$ in $\mathrm{TOP}^\circledast$, then $\Sigma X$ is 
a finitely filtered space of type 
$(\Sigma f_{n-1},\dots,\Sigma f_2)$ in $\mathrm{TOP}^\circledast$ such that $j_{\Sigma X}=\Sigma j_X:\Sigma X_n\to \Sigma X$ and 
$\sigma_{\Sigma X}\simeq (-1)^n \Sigma\sigma_X: \Sigma X\to \Sigma^{n-1}X_2$. 
\end{lemma}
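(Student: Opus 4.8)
The plan is to verify the three assertions in Lemma~B.5 --- that $\Sigma X$ is finitely filtered of type $(\Sigma f_{n-1},\dots,\Sigma f_2)$, that $j_{\Sigma X}=\Sigma j_X$, and that $\sigma_{\Sigma X}\simeq(-1)^n\Sigma\sigma_X$ --- by transporting the given filtration of $X$ through the suspension functor and keeping careful track of the sign that the canonical map $\delta$ picks up under suspension. First I would set $F_k(\Sigma X)=\Sigma(F_kX)$ for $0\le k\le n-1$; this is a filtration of $\Sigma X$ whose inclusions $F_k(\Sigma X)\subset F_{k+1}(\Sigma X)$ are $\Sigma$ of the inclusions $F_kX\subset F_{k+1}X$, hence free (resp.\ closed free) cofibrations by Corollary~2.3(1) (or Corollary~A.2). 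Using that suspension commutes with passage to quotients of cofibrations, $F_{k+1}(\Sigma X)/F_k(\Sigma X)$ is canonically homeomorphic to $\Sigma(F_{k+1}X/F_kX)$, so I can take $g_k^{\Sigma X}=\Sigma g_{k-?}$ suitably reindexed: precisely $g_k^{\Sigma X}:\Sigma^k(\Sigma X_{n-k})=\Sigma(\Sigma^{k-1}\cdots)$ --- here I must be careful, since the type of $\Sigma X$ is $(\Sigma f_{n-1},\dots,\Sigma f_2)$ with the $i$-th space being $\Sigma X_i$, and the degree-$k$ subquotient should be $\Sigma^k(\Sigma X_{n-k})=\Sigma^{k+1}X_{n-k}$, which matches $\Sigma(\Sigma^kX_{n-k})=\Sigma(F_{k+1}X/F_kX)$ via $\Sigma g_k$.

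The main step is checking that the homotopy-commutative pentagon (B.3) for $\Sigma X$ follows from that for $X$. The issue is that the connecting map $\delta_{\Sigma X}:F_{k+1}(\Sigma X)/F_k(\Sigma X)\to\Sigma(F_k(\Sigma X)/F_{k-1}(\Sigma X))$ is not simply $\Sigma$ of $\delta_X$: by Lemma~B.1, $\delta$ is the connecting map in a Puppe cofibre sequence, and suspending a cofibre sequence introduces a sign $-1$ at each connecting map (this is exactly the $-\Sigma j$, $-\Sigma q$ phenomenon recorded in Lemma~B.1 itself). So under the identification $F_{k+1}(\Sigma X)/F_k(\Sigma X)\approx\Sigma(F_{k+1}X/F_kX)$ one has $\delta_{\Sigma X}\simeq -\Sigma\delta_X$ (up to the standard sign conventions, and possibly a switching homeomorphism of suspension coordinates which is homotopic to $\pm 1$). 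Plugging $\delta_{\Sigma X}\simeq-\Sigma\delta_X$, $\Sigma q_{\Sigma X}=\Sigma(\Sigma q_X)$, and $g_k^{\Sigma X}=\Sigma g_k$ into the pentagon for $\Sigma X$ and comparing with $\Sigma$ applied to the pentagon (B.3) for $X$, the two sides differ by a sign coming from $-\Sigma\delta_X$; but the left vertical map is $\Sigma g_{k-1}^{\Sigma X}=\Sigma\Sigma g_{k-1}$ and the top map is $\Sigma^k(\Sigma f_{n-k})=\Sigma(\Sigma^kf_{n-k})$, so in fact suspending (B.3) gives exactly the pentagon for $\Sigma X$ with $\delta$ replaced by $\Sigma\delta_X$; the remaining sign discrepancy is absorbed by replacing $g_{n-2}^{\Sigma X}$ by its negative, which is what produces the $(-1)^n$ in $\sigma_{\Sigma X}$.

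Next I would identify $j_{\Sigma X}$ and $\sigma_{\Sigma X}$ explicitly from (B.4). By construction $j_{\Sigma X}:\Sigma X_n\xrightarrow{g_0^{\Sigma X}}F_1(\Sigma X)\subset\Sigma X$, and $g_0^{\Sigma X}=\Sigma g_0$ with $F_1(\Sigma X)=\Sigma F_1X$, so $j_{\Sigma X}=\Sigma g_0$ followed by $\Sigma(F_1X\subset X)=\Sigma j_X$; hence $j_{\Sigma X}=\Sigma j_X$ on the nose. For $\sigma_{\Sigma X}$, the definition gives $\Sigma X=F_{n-1}(\Sigma X)\xrightarrow{q}F_{n-1}(\Sigma X)/F_{n-2}(\Sigma X)\xrightarrow{(g_{n-2}^{\Sigma X})^{-1}}\Sigma^{n-1}X_2$; the quotient map is $\Sigma$ of the quotient map for $X$, the identification of the subquotient with $\Sigma(\Sigma^{n-2}X_2)$ is canonical, and $(g_{n-2}^{\Sigma X})^{-1}$ is the inverse of $\Sigma g_{n-2}$ up to the sign $(-1)^n$ forced above. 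Therefore $\sigma_{\Sigma X}\simeq(-1)^n\Sigma\sigma_X$.

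The hard part will be pinning down the precise sign: one must track the degree-$(-1)^{mn}$ switching homeomorphisms $\tau(\s^m,\s^n)$ that intervene whenever one commutes a suspension coordinate past the $I$-coordinate of a cone, compare this with the convention by which $-\Sigma j$ and $-\Sigma q$ appear in the Puppe sequence of Lemma~B.1, and check that the accumulated sign over the $n-2$ stages of the filtration is exactly $(-1)^n$ and not, say, $(-1)^{n-1}$ or $(-1)^{\binom{n}{2}}$. I expect this bookkeeping --- essentially the same computation that underlies Theorem~6.3.1 and its proof via $\tau(\s^{n-2},\s^\ell)$ --- to be routine but delicate, and I would organize it by an induction on $k$ establishing $\delta_{F_{k+1}(\Sigma X)}\simeq-\Sigma\delta_{F_{k+1}X}$ under the canonical identifications, with the base case $k=1$ handled directly from Lemma~B.1 applied to the cofibration $F_0\subset F_1\subset F_2$. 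Once that inductive claim is in hand, the three assertions of the lemma drop out as above.
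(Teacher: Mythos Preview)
Your overall strategy matches the paper's: set $F_k(\Sigma X)=\Sigma(F_kX)$, use Corollary~2.3(1) for the cofibration condition, and exploit that the connecting map picks up a sign under suspension, $\delta_k^*\simeq -\Sigma\delta_k$. However, your handling of the signs has a genuine gap. You propose $g_k^{\Sigma X}=\Sigma g_k$ and then claim the resulting sign discrepancy ``is absorbed by replacing $g_{n-2}^{\Sigma X}$ by its negative.'' This cannot work: the pentagon (B.2) at level $k$ involves \emph{both} $g_k$ and $g_{k-1}$, so with $\delta_k^*=-\Sigma\delta_k$ and $g_k^{\Sigma X}=\Sigma g_k$ the diagram fails to commute by a factor of $-1$ at \emph{every} level $1\le k\le n-2$, not just the top one. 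Changing only $g_{n-2}$ affects only the pentagon at $k=n-2$ and leaves the others broken; moreover a single sign change would give $\sigma_{\Sigma X}\simeq -\Sigma\sigma_X$, not $(-1)^n\Sigma\sigma_X$.

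The paper's fix is to take $g_k^*=\Sigma g_k\circ(1_{X_{n-k}}\wedge\tau(\s^1,\s^k))\simeq(-1)^k\Sigma g_k$ for \emph{all} $k$. Then in the pentagon at level $k$ the sign $-1$ from $\delta_k^*$ is exactly cancelled by the ratio $(-1)^k/(-1)^{k-1}$ coming from $g_k^*$ and $\Sigma g_{k-1}^*$, so every pentagon commutes. Since $g_0^*=\Sigma g_0$ (as $\tau(\s^1,\s^0)$ is the identity) one still gets $j_{\Sigma X}=\Sigma j_X$, and now $\sigma_{\Sigma X}=(g_{n-2}^*)^{-1}\circ\Sigma q_{n-2}\simeq(-1)^{n-2}(\Sigma g_{n-2})^{-1}\circ\Sigma q_{n-2}=(-1)^n\Sigma\sigma_X$, which is the asserted sign. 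Your closing remarks about tracking $\tau(\s^m,\s^n)$ point in exactly this direction, but the intermediate claim that a single sign adjustment suffices is the step that would fail.
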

\begin{proof}
From the definitions, there is a filtration $F_0X=\{*\}\subset F_1X\subset\cdots\subset F_{n-1}X=X$ with 
$F_{k-1}X\subset F_kX$ a free or closed free cofibration for $k\ge 1$ and maps $j_X, \sigma_X$ of (B.3). 
Define $F_k\Sigma X=\Sigma F_kX$ for $0\le k\le n-1$. 
Set $g_k^*=\Sigma g_k\circ(1_{X_{n-k}}\wedge\tau(\s^1,\s^k))$ for $0\le k\le n-2$. 
Then 
$g_k^*$ is a homotopy equivalence and $g_k^*\simeq (-1)^k\Sigma g_k$ under the identification 
(2.1): $\Sigma^k\Sigma X_{n-k}=\Sigma\Sigma^kX_{n-k}$. 
By suspending (B.2), the diagram
$$
\xymatrix{
&\Sigma^k\Sigma X_{n+1-k} \ar[d]_-{1_{X_{n+1-k}}\wedge \tau(\s^1,\s^{k-1}\wedge\s^1)}  & 
\Sigma^k\Sigma X_{n-k} \ar[l]_-{\Sigma^k\Sigma f_{n-k}} \ar[d]_-{1_{X_{n-k}}\wedge\tau(\s^1,\s^k)} \ar@/^10mm/[dd]^-{g_k^*}\\
\Sigma\Sigma^{k-1}\Sigma X_{n+1-k} \ar[d]_-{\Sigma g_{k-1}^*} & 
\Sigma\Sigma^k X_{n+1-k} \ar[l]^-{1_{X_{n+1-k}}\wedge\tau(\s^{k-1},\s^1)\wedge 1_{\s^1}}
\ar[d]^-{\Sigma^2g_{k-1}} & 
\Sigma\Sigma^k X_{n-k} \ar[d]_-{\Sigma g_k} \ar[l]^-{\Sigma\Sigma^k f_{n-k}}\\
\Sigma^2(F_kX/F_{k-1}X) \ar@{=}[r] & \Sigma^2 (F_k X/F_{k-1}X)  & 
\Sigma(F_{k+1}X/F_k X) \ar[l]_-{\Sigma^2q_{k-1}\circ\Sigma\delta_k} 
}
$$
is homotopy commutative for $1\le k\le n-2$, 
where $\delta_k$ is a connecting map of 
the cofibre sequence $F_kX\overset{j_k}{\subset} F_{k+1}X\overset{q_k}{\to} F_{k+1}X/F_kX$ 
and $q_{k-1}:F_kX\to F_kX/F_{k-1}X$ is the quotient map. 
Set 
$$
\delta_k^*=-\Sigma\delta_k:F_{k+1}\Sigma X/F_k\Sigma X=\Sigma(F_{k+1}X/F_kX)\to\Sigma F_k\Sigma X=\Sigma^2 F_kX
$$
which is a connecting map of 
the cofibre sequence $F_k\Sigma X\overset{-\Sigma j_k}{\longrightarrow}F_{k+1}\Sigma X
\overset{-\Sigma q_k}{\longrightarrow} F_{k+1}\Sigma X/F_k\Sigma X$. 
We have
\begin{align*}
&\Sigma^2 q_{k-1}\circ\delta_k^*\circ g_k^*=\Sigma^2q_{k-1}\circ(-\Sigma\delta_k)\circ g_k^*\\
&\simeq(-1_{\Sigma(F_k\Sigma X/F_{k-1}\Sigma X)})\circ\Sigma^2q_{k-1}\circ\Sigma\delta_k\circ g_k^*\\
&\simeq (-1_{\Sigma(F_k\Sigma X/F_{k-1}\Sigma X)})\circ\Sigma^2g_{k-1}\circ
(1_{X_{n+1-k}}\wedge\tau(\s^1,\s^{k-1}\wedge\s^1))\circ \Sigma^k\Sigma f_{n-k}\\
&=(-1_{\Sigma(F_k\Sigma X/F_{k-1}\Sigma X)})\circ\Sigma g_{k-1}^*\circ(1_{X_{n+1-k}}\wedge\tau(\s^{k-1},\s^1)\wedge 1_{\s^1})\\
&\hspace{2cm} \circ(1_{X_{n+1-k}}\wedge\tau(\s^1,\s^{k-1}\wedge\s^1))\circ\Sigma^k\Sigma f_{n-k}\\
&\simeq \Sigma g_{k-1}^*\circ(-1_{\Sigma\Sigma^{k-1}\Sigma X})\circ (1_{X_{n+1-k}}\wedge\tau(\s^{k-1},\s^1)\wedge 1_{\s^1})\\
&\hspace{2cm} \circ (1_{X_{n+1-k}}\wedge\tau(\s^1,\s^{k-1}\wedge\s^1))\circ\Sigma^k\Sigma f_{n-k}\\
&\simeq \Sigma g_{k-1}^*\circ\Sigma^k\Sigma f_{n-k}.
\end{align*}
Hence $\Sigma X$ is a finitely filtered space of type $(\Sigma f_{n-1},\dots,\Sigma f_2)$ such that 
$j_{\Sigma X}=\Sigma j_X$ and 
$\sigma_{\Sigma X}=g_{n-2}^{*-1}\circ \Sigma q_{n-2}\simeq (-1)^{n-2}\Sigma g_{n-2}^{-1}\circ \Sigma q_{n-2}= (-1)^n \Sigma\sigma_X$. 
\end{proof}

\begin{thm}
$\Sigma \langle\vec{\bm f}\,\rangle^\circledast\subset(-1)^n\langle \Sigma\vec{\bm f}\,\rangle^\circledast$. 
\end{thm}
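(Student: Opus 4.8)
The plan is to verify the inclusion on a single element. Let $\alpha\in\langle\vec{\bm f}\,\rangle^\circledast$, and choose a finitely filtered space $X$ of type $(f_{n-1},\dots,f_2)$ in $\mathrm{TOP}^\circledast$ together with maps $g:\Sigma^{n-2}X_1\to X$ and $h:X\to X_{n+1}$ for which the diagram (B.6) is homotopy commutative and $\alpha=[h\circ g]$; explicitly, $\sigma_X\circ g\simeq\Sigma^{n-2}f_1$ and $h\circ j_X\simeq f_n$. The whole argument then reduces to suspending this data and inserting a corrective sign. The essential input is Lemma~B.5, which says that $\Sigma X$ is again a finitely filtered space, now of type $(\Sigma f_{n-1},\dots,\Sigma f_2)$ in $\mathrm{TOP}^\circledast$, with $j_{\Sigma X}=\Sigma j_X$ and $\sigma_{\Sigma X}\simeq(-1)^n\,\Sigma\sigma_X$. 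Note also that $\Sigma\vec{\bm f}\in\mathrm{TOP}^\circledast$ by Corollary~2.3(2), so $\langle\Sigma\vec{\bm f}\,\rangle^\circledast\subset[\Sigma^{n-2}(\Sigma X_1),\Sigma X_{n+1}]=[\Sigma^{n-1}X_1,\Sigma X_{n+1}]$ is defined and lives in the same group as $\Sigma\langle\vec{\bm f}\,\rangle^\circledast$.

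Concretely I would set $X'=\Sigma X$, $h'=\Sigma h:\Sigma X\to\Sigma X_{n+1}$, and $g'=(-1)^n\,\Sigma g:\Sigma^{n-1}X_1\to\Sigma X$, where $\Sigma^{n-1}X_1=\Sigma^{n-2}(\Sigma X_1)$ under the identification (2.1), and where the sign on the suspension $\Sigma g$ is unambiguous. Three verifications remain, all routine. First, the lower triangle of (B.6) for $\Sigma\vec{\bm f}$: $h'\circ j_{\Sigma X}=\Sigma h\circ\Sigma j_X=\Sigma(h\circ j_X)\simeq\Sigma f_n$. Second, the upper-left triangle: $\sigma_{\Sigma X}\circ g'\simeq\big((-1)^n\Sigma\sigma_X\big)\circ\big((-1)^n\Sigma g\big)\simeq\Sigma\sigma_X\circ\Sigma g=\Sigma(\sigma_X\circ g)\simeq\Sigma\Sigma^{n-2}f_1=\Sigma^{n-2}(\Sigma f_1)$, the two copies of $(-1)^n$ cancelling because one sits on the target of $\Sigma\sigma_X$ and the other on the source of $\Sigma g$, so both may be slid into the composite. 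Third, $h'\circ g'=\Sigma h\circ(-1)^n\Sigma g\simeq(-1)^n\,\Sigma(h\circ g)$, whose homotopy class is $(-1)^n\Sigma\alpha$. Hence $(X',g',h')$ witnesses $(-1)^n\Sigma\alpha\in\langle\Sigma\vec{\bm f}\,\rangle^\circledast$, i.e.\ $\Sigma\alpha\in(-1)^n\langle\Sigma\vec{\bm f}\,\rangle^\circledast$; since $\alpha$ was arbitrary, $\Sigma\langle\vec{\bm f}\,\rangle^\circledast\subset(-1)^n\langle\Sigma\vec{\bm f}\,\rangle^\circledast$.

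The main obstacle here is not geometric — all the serious work (constructing the suspended filtration, computing $j_{\Sigma X}$, and extracting the sign $(-1)^n$ in $\sigma_{\Sigma X}$) has already been done in Lemma~B.5. What needs care is the sign bookkeeping: one must know that ``$-$'' is well defined for maps between suspensions and that $(-u)\circ v\simeq u\circ(-v)\simeq-(u\circ v)$ whenever one of $u,v$ factors through (or into) a suspension, which is precisely what makes the single insertion of $(-1)^n$ into $g'$ simultaneously repair the upper triangle and produce exactly the factor $(-1)^n$ on $h'\circ g'$. A minor secondary point is that the identifications (2.1) turn $\Sigma^{n-2}(\Sigma(\,\cdot\,))$ into $\Sigma^{n-1}(\,\cdot\,)$ on spaces, and $\Sigma^{n-2}(\Sigma f)$ into $\Sigma^{n-1}f$ on maps, with no intervening switching map, so no unexpected sign enters from the coordinate rearrangement.
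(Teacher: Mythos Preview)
Your proof is correct and follows essentially the same route as the paper's: both take an element $\alpha=h\circ g$ witnessed by a filtered space $X$, apply the key lemma to pass to $\Sigma X$ with $j_{\Sigma X}=\Sigma j_X$ and $\sigma_{\Sigma X}\simeq(-1)^n\Sigma\sigma_X$, and then twist $\Sigma g$ by a factor of $(-1)^n$ to repair the upper triangle while producing exactly $(-1)^n\Sigma\alpha$ as the new composite. The only cosmetic difference is that the paper realizes the sign concretely via the switching homeomorphism $1_{X_1}\wedge\tau(\s^1,\s^{n-2})$ (of degree $(-1)^{n-2}=(-1)^n$) rather than writing $(-1)^n\Sigma g$ directly, and your observation that $\Sigma\Sigma^{n-2}f_1=\Sigma^{n-2}\Sigma f_1$ under the identification~(2.1) is exactly what makes this packaging equivalent.
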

\begin{proof} 
Take $\alpha\in\langle\vec{\bm f}\,\rangle^\circledast$. 
Then there is a finitely filtered space $X$ of type $(f_{n-1},\dots,f_2)$ in $\mathrm{TOP}^\circledast$ 
and the homotopy commutative diagram (B.4) with $\alpha=h\circ g$. 
Suspending (B.4), we have the next homotopy commutative diagram, 
where $\tau=1_{X_2}\wedge\tau(\s^{n-2},\s^1)$.
$$
\xymatrix{
&& \Sigma^{n-2}\Sigma X_1 \ar[ddll]_-{\Sigma^{n-2}\Sigma f_1} \ar[d]^-{1_{X_1}\wedge\tau(\s^1,\s^{n-2})} & \\
& & \Sigma\Sigma^{n-2}X_1 \ar[dl]_-{\Sigma\Sigma^{n-2}f_1} \ar[d]^-{\Sigma g} & \\
\Sigma^{n-2}\Sigma X_2 &\Sigma\Sigma^{n-2}X_2 \ar[l]^-{\tau} & \Sigma X \ar[l]^-{\Sigma\sigma_X} \ar[d]_-{\Sigma h}
 & \Sigma X_n \ar[l]_-{\Sigma j_X} \ar[dl]^-{\Sigma f_n}\\
&& \Sigma X_{n+1} &
}
$$
Since $\Sigma X$ is a finitely filtered space of type $(\Sigma f_{n-1},\dots,\Sigma f_2)$ in $\mathrm{TOP}^\circledast$, 
we have $\tau\circ \Sigma\sigma_X\simeq \sigma_{\Sigma X}$ and $\Sigma j_X=j_{\Sigma X}$ by Lemma B.3. 
It follows from the homotopy commutativity of the above diagram that 
\begin{gather*}
\sigma_{\Sigma X}\circ \Sigma g\circ(1_{X_1}\wedge\tau(\s^1,\s^{n-2}))\simeq \Sigma^{n-2}\Sigma f_1,\\
(-1)^n \Sigma\alpha=\Sigma h\circ \Sigma g\circ (1_{X_1}\wedge\tau(\s^1,\s^{n-2}))\in\langle \Sigma\vec{\bm f}\,\rangle^\circledast,
\end{gather*}
and so $\Sigma\alpha\in (-1)^n\langle \Sigma\vec{\bm f}\,\rangle^\circledast$. 
This completes the proof. 
\end{proof}

The following lemma was used in the proof of Lemma 6.4.2. 
It can be proved easily, so we omit details. 

\begin{lemma}
Let $j:A\subset X$ be a pointed inclusion map which is a free cofibration, and $f:X\to Y$ a pointed map. 
Then there are natural homeomorphisms 
$$(Y\cup_f CX)/(Y\cup_{f\circ j}CA)\approx \Sigma X/\Sigma A\approx \Sigma(X/A).$$ 
\end{lemma}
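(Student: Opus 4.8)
The plan is to obtain both homeomorphisms from the elementary point-set fact that if $p\colon Z\to W$ is a surjective quotient map, $B\subset W$, and $p$ is injective on $Z\setminus p^{-1}(B)$, then the continuous bijection $Z/p^{-1}(B)\to W/B$ induced by $p$ is a homeomorphism: the composite $Z\to W\to W/B$ is a quotient map, being a composite of quotient maps, it collapses precisely $p^{-1}(B)$ and nothing else, and a continuous bijection through which one quotient map factors another is itself a quotient map, hence a homeomorphism. Both quotient maps arising below enjoy the required injectivity for trivial reasons.

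First I would establish $(Y\cup_f CX)/(Y\cup_{f\circ j}CA)\approx\Sigma X/\Sigma A$. By Proposition~2.2 the map $1_Y\cup Cj\colon Y\cup_{f\circ j}CA\to Y\cup_f CX$ is a free cofibration, hence an embedding, so we may regard $Y\cup_{f\circ j}CA$ as the subspace $Y\cup\{x\wedge t\mid x\in A\}$ of $Y\cup_f CX$; likewise $\Sigma j\colon\Sigma A\to\Sigma X$ is an embedding by Corollary~2.3(1). Take $p=q_f\colon Y\cup_f CX\to(Y\cup_f CX)/Y=\Sigma X$, which is a quotient map by its very definition and is injective off $Y$, and take $B=\Sigma A$. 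From the formulas $q_f(y)=*$ and $q_f(x\wedge t)=x\wedge\overline{t}$ one reads off that $q_f^{-1}(\Sigma A)=Y\cup\{x\wedge t\mid x\in A\}$, using $x\wedge 0=f(x)\in Y$, $x\wedge 1=*\in Y$, and the fact that $x\wedge\overline{t}\in\Sigma A$ forces $x\in A$ or $t\in\{0,1\}$. Since $Y\subset q_f^{-1}(\Sigma A)$, the map $q_f$ is injective off $q_f^{-1}(\Sigma A)$, and the principle above gives the first homeomorphism.

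Second I would establish $\Sigma X/\Sigma A\approx\Sigma(X/A)$, using the identification $\Sigma X=X\wedge\s^1$ of Section~2. Let $q\colon X\to X/A$ be the quotient map. Because $\s^1$ is locally compact Hausdorff, the functor $(-)\times\s^1$ preserves quotient maps, so $q\times 1_{\s^1}\colon X\times\s^1\to(X/A)\times\s^1$ is a quotient map; composing with the quotient map $(X/A)\times\s^1\to(X/A)\wedge\s^1$ and factoring through the quotient map $X\times\s^1\to X\wedge\s^1$ shows that $q\wedge 1_{\s^1}\colon\Sigma X\to\Sigma(X/A)$ is a surjective quotient map. A direct inspection of when two points of $X\wedge\s^1$ have the same image under $q\wedge 1_{\s^1}$ shows that $(q\wedge 1_{\s^1})^{-1}(*)=\Sigma A$ and that $q\wedge 1_{\s^1}$ is injective elsewhere, so applying the principle with $p=q\wedge 1_{\s^1}$ and $B=\{*\}$ yields the second homeomorphism. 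Composing the two gives the assertion, and since every map involved ($q_f$, the cone inclusions, $q\wedge 1_{\s^1}$) is induced functorially from $f$ and $j$, the resulting homeomorphism is natural.

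There is no substantial obstacle here: the only slightly delicate points are purely set-theoretic bookkeeping — verifying that $q_f^{-1}(\Sigma A)$ is exactly $Y\cup_{f\circ j}CA$ rather than something larger, and recalling that smashing with the compact Hausdorff space $\s^1$ preserves quotient maps — and the hypothesis that $j$ is a free cofibration is used only to guarantee that $1_Y\cup Cj$ and $\Sigma j$ are embeddings, so that the quotients appearing in the statement are formed from honest subspaces.
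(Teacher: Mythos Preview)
Your argument is correct. The paper itself omits the proof entirely, stating only that ``it can be proved easily, so we omit details,'' so there is no approach to compare against; your use of the elementary quotient-map principle together with the identification $q_f^{-1}(\Sigma A)=Y\cup_{f\circ j}CA$ and the fact that smashing with the locally compact Hausdorff space $\s^1$ preserves quotient maps is exactly the kind of routine verification the authors had in mind.
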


\section*{Acknowledgement}
We express warm thanks to M. Yasui for guiding us to articles \cite{L,W2}, 
and to the referee for useful suggestions. 

\end{appendix}


\begin{thebibliography}{20}

\bibitem[{\bf 1}]{B1} 
\newblock {D. Blanc}, 
\newblock \emph{Higher homotopy operations and the realizability of homotopy groups}, 
\newblock {Proc. London Math. Soc.}, \textbf{70} (1995), 214--240.

\bibitem[{\bf 2}]{B2} 
\newblock {D. Blanc and M. Markl}, 
\newblock \emph{Higher homotopy operations}, 
\newblock {Math. Z.} \textbf{345} (2003), 1--29.

\bibitem[{\bf 3}]{C} 
\newblock J. M. Cohen, 
\newblock \emph{The decomposition of stable homotopy}, 
\newblock {Ann. of Math.} \textbf{87} (1968), 305--320. 

\bibitem[{\bf 4}]{tD} 
\newblock {T. tom Dieck}, 
\newblock {Algebraic Topology}, 
\newblock {EMS Textbooks in Math., European Math. Soc.}, 2008.

\bibitem[{\bf 5}]{DKP} 
\newblock {T. tom Dieck, K. H. Kamps and D. Puppe}, 
\newblock {Homotopietheorie}, 
\newblock {Lecture Notes in Math.} \textbf{157}, Springer-Verlag, 1970. 

\bibitem[{\bf 6}]{D} 
\newblock {A. Dold}, 
\newblock {Halbexakte Homotopiefunktoren}, 
\newblock {Lecture Notes in Math.} \textbf{12}, Springer-Verlag, 1966.

\bibitem[{\bf 7}]{G} 
\newblock {H. H. Gershenson}, 
\newblock \emph{Higher composition products}, 
\newblock {J. Math. Kyoto Univ.} \textbf{5} (1965), 1--37.

\bibitem[{\bf 8}]{HMO} 
\newblock {K. A. Hardie, H. J. Marcum and N. Oda}, 
\newblock \emph{Bracket operations in the homotopy theory of a 2-category}, 
\newblock {Rend. Ist. Mat. Univ. Trieste} \textbf{33} (2001), 19--70.

\bibitem[{\bf 9}]{J} 
\newblock {I. M. James}, 
\newblock {General topology and homotopy theory}, 
\newblock {Springer-Verlag, New York}, 1984. 

\bibitem[{\bf 10}]{K} 
\newblock {S. O. Kochman}, 
\newblock \emph{Uniqueness of Massey products on the stable homotopy of spheres}, 
\newblock {Can. J. Math.} \textbf{32} (1980), 576--589. 

\bibitem[{\bf 11}]{L} 
\newblock {A. F. Lawrence}, 
\newblock {Matric Massey products and matric Toda brackets in the Adams spectral sequence}, 
\newblock {Ph.D Theses, Univ. Chicago}, 1969. 

\bibitem[{\bf 12}]{MO} 
\newblock {H. J. Marcum and N. Oda}, 
\newblock \emph{Long box bracket operations in homotopy theory}, 
\newblock {Appl. Categor. Strut.} \textbf{19} (2011), 137--173.

\bibitem[{\bf 13}]{M} 
\newblock {M. Mori}, 
\newblock \emph{On higher Toda brackets}, 
\newblock {Bull. Colledge of Sci. Univ. of the Ryukyus} \textbf{35} (1983), 1--4.

\bibitem[{\bf 14}]{Og} 
\newblock {K, \^{O}guchi}, 
\newblock \emph{A generalization of secondary composition and its applications}, 
\newblock {J. Fac. Sci. Univ. Tokyo} \textbf{10} (1963), 29--79.

\bibitem[{\bf 15}]{OO1} 
\newblock {H. \=Oshima and K. \=Oshima}, 
\newblock \emph{Quasi tertiary compositions and a Toda bracket in homotopy groups of $\mathrm{SU}(3)$}, 
\newblock {Math. J. Okayama Univ.} \textbf{57} (2015), 13--78.

\bibitem[{\bf 16}]{Po} 
\newblock {G. J. Porter}, 
\newblock \emph{Higher products}, 
\newblock {Trans. Amer. Math. Soc.} \textbf{148} (1970), 315--345.

\bibitem[{\bf 17}]{P} 
\newblock {D. Puppe}, 
\newblock \emph{Homotopiemengen und ihre induzierten Abbildungen I}, 
\newblock {Math. Z.} \textbf{69} (1958), 299--344.

\bibitem[{\bf 18}]{Sp} 
\newblock {E. H. Spanier}, 
\newblock \emph{Secondary operations on mappings and cohomology}, 
\newblock {Ann. of Math.} \textbf{75} (1962), 260--282. 

\bibitem[{\bf 19}]{S} 
\newblock {E. H. Spanier}, 
\newblock \emph{Higher order operations}, 
\newblock {Trans. Amer. Math. Soc.} \textbf{109} (1963), 509--539. 

\bibitem[{\bf 20}]{S1} 
\newblock {A. Str{\o}m}, 
\newblock \emph{Note on cofibrations}, 
\newblock {Math. Scand.} \textbf{19} (1966), 11--14.

\bibitem[{\bf 21}]{S2} 
\newblock {A. Str{\o}m}, 
\newblock \emph{Note on cofibrations II}, 
\newblock {Math. Scand.} \textbf{22} (1969), 130--142. 

\bibitem[{\bf 22}]{S3} 
\newblock {A. Str{\o}m}, 
\newblock \emph{The homotopy category is a homotopy category}, 
\newblock {Arch. Math.} \textbf{23} (1972), 435--441.

\bibitem[{\bf 23}]{T1} 
\newblock {H. Toda}, 
\newblock \emph{Generalized Whitehead products and homotopy groups of spheres}, 
\newblock {J. Inst. Poly. Osaka City Univ.} \textbf{3} (1952), 43--82. 

\bibitem[{\bf 24}]{T3} 
\newblock {H. Toda}, 
\newblock \emph{$p$-primary components of homotopy groups IV. Compositions and toric constructions}, 
\newblock {Mem. Coll. Sci. Univ. Kyoto, Ser. A} \textbf{32} (1959), 297--332. 

\bibitem[{\bf 25}]{T} 
\newblock {H. Toda}, 
\newblock {Composition methods in homotopy groups of spheres}, 
\newblock {Ann. of Math. Studies} \textbf{49}, Princeton, 1962.

\bibitem[{\bf 26}]{W} 
\newblock {G. Walker}, 
\newblock \emph{Long Toda brackets}, 
\newblock {Proc. Adv. Studies Inst. Alg. Top., 
Aarhus Univ. Mat. Inst. Various Publ. Ser.} \textbf{13} (1970), 612--631.

\bibitem[{\bf 27}]{W2} 
\newblock {G. Walker}, 
\newblock {Toda brackets and the odd primary homotopy of complex Stiefel manifolds}, \newblock {Manchester Centre for Pure Mathematics, Preprint No 1994/02}.
\end{thebibliography}
\end{document}